\documentclass[reqno]{amsart}
\usepackage{fullpage}
\usepackage{array}
\usepackage{mathrsfs}
\usepackage{hyperref}
\usepackage{graphicx,color}
\usepackage{float}

\newcommand{\varied}

\usepackage{longtable}
\usepackage{booktabs}

\newcommand{\red}{\color{red}}
\newcommand{\black}{\color{black}}

\newtheorem{theorem}{Theorem}[section]

\newtheorem{lemma}[theorem]{Lemma}

\newtheorem{remark}[theorem]{Remark}
\newtheorem{proposition}[theorem]{Proposition}

\newcolumntype{C}[1]{>{\centering\let\newline\\\arraybackslash\hspace{0pt}}m{#1}}

\usepackage[utf8]{inputenc}
\usepackage{amsfonts}
\usepackage{amssymb}
\usepackage{verbatim}

\numberwithin{equation}{section}

\begin{document}
\title{Group gradings, graded identities and central polynomials of algebras with almost polynomial $\delta$-growth}

\author{Wesley Quaresma Cota$^{1,*}$ and Daniela Martinez Correa$^2$}
\thanks{{\it E-mail addresses:} wesleyqc@ime.usp.br and danielam.correa@ime.usp.br}

\thanks{\footnotesize $^{1}$ Supported by FAPESP, grants~2024/14914-9, 2025/05699-0 and 2026/06588-0}

\thanks{\footnotesize $^{2}$ Supported by FAPESP, grants~2024/14914-9, 2024/01338-0 and ~2025/11605-8.}

\thanks{\footnotesize $^{*}$ Corresponding author.}

\subjclass[2020]{Primary 16R10, 16W50, Secondary 16W55}

\keywords{ polynomial identities, central polynomials, graded algebras, codimension growth}

\dedicatory{Instituto de Matemática, Estatística e Ciência da Computação,\\
Universidade de São Paulo, São Paulo,
Brazil}

\begin{abstract}

 The finite-dimensional 
\(F\)-algebras
$D=F(e_{11}+e_{33})+Fe_{22}+Fe_{12}+Fe_{13}+Fe_{23}
\subseteq UT_3$ and
$D_0=Fe_{22}+Fe_{33}+Fe_{12}+Fe_{13}+Fe_{14}+Fe_{23}+Fe_{24}+Fe_{34}
\subseteq UT_4$
occupy a distinguished position in the theory of central polynomials.
Together with the infinite-dimensional Grassmann algebra, these
algebras generate the only varieties of almost polynomial growth
of proper central codimensions.
We determine finite generating sets for their $T$-spaces of central
polynomials and compute their central and proper central codimensions.
We also prove that every group grading on $D_0$ is isomorphic to an
elementary grading and determine the identities and codimensions
associated with all its $\mathbb{Z}_2$-gradings. 
\end{abstract}

\maketitle

\section{Introduction}

One of the fundamental problems in the theory of algebras with
polynomial identities is to determine explicit generators for the
\(T\)-ideal of identities of a given algebra. Even for classical
finite-dimensional algebras, this problem is highly nontrivial. For
instance, Drensky \cite{DK} proved that the \(T\)-ideal of polynomial
identities of \(M_2(F)\) is generated by the standard polynomial
\(\operatorname{St}_4(x_1,x_2,x_3,x_4)\) of degree $4$ and the Hall identity
$[[x_1,x_2]^2,x_3].$ By contrast, for the full matrix algebra \(M_n(F)\), \(n\geq3\), over a field of characteristic zero, no
explicit finite basis of identities is known in general. Kemer's celebrated finite basis theorem~\cite{Kemer1987}
asserts that, in characteristic zero, every proper $T$-ideal
of the free associative algebra is finitely generated. Nevertheless, the proof does not provide explicit generators for the identities of a prescribed algebra. 

An asymptotic approach to polynomial identities is provided by the
codimension sequence, introduced by Regev \cite{RG}. The codimension sequence $c_n(A)$
records the dimensions of the multilinear components of the
relatively free algebra of $A$. Regev proved that this sequence is
exponentially bounded for every PI-algebra, whereas Kemer
\cite{Kem} established the dichotomy between polynomially bounded
and exponential growth. The varieties generated by the
infinite-dimensional Grassmann algebra $\mathcal{G}$ and by $UT_2$
are precisely those of almost polynomial growth: their codimensions
grow exponentially, while those of every proper subvariety are
polynomially bounded. Here $UT_m$ denotes the algebra of
$m\times m$ upper triangular matrices over $F$.

Alongside polynomial identities, central polynomials have played a
fundamental role in PI-theory since the pioneering work of Formanek
\cite{Formanek} and Razmyslov \cite{Razmyslov} on the existence of proper central polynomials for matrix algebras. A polynomial
\(f\in F\langle X\rangle\) is called a \emph{central polynomial} of an algebra $A$ if it takes values in the center of $A$ whenever its variables are replaced by elements of $A$. The central polynomials of an algebra $A$ form a $T$-space,
denoted by $\operatorname{Id}^z(A)$, which need not be a $T$-ideal.
Their multilinear structure is studied through the central and
proper central codimensions, denoted by $c_n^z(A)$ and $\delta_n(A)$.
The varieties generated by $\mathcal{G}$ and $UT_2$ are also the only
varieties of almost polynomial growth of central codimensions
\cite{GiZai2}.

For proper central codimensions, Giambruno, La Mattina and Polcino Milies \cite{GiamDaniPolc} proved that the varieties of almost polynomial growth are precisely those generated by $\mathcal{G}$,
$D$ and $D_0$, where
\[ D= F(e_{11}+e_{33})+F e_{22}+F e_{12}+F e_{13}+F e_{23} \subseteq UT_3 \mbox{ and }\] \[ D_0= F e_{22}+F e_{33} +F e_{12}+F e_{13}+F e_{14} +F e_{23}+F e_{24}+F e_{34} \subseteq UT_4, \]
and $e_{ij}$ denotes the usual matrix unit.

Gordienko \cite{Gordienko} determined the ordinary polynomial
identities and codimensions of $D$. Gomez Parada and Koshlukov
\cite{Gomezparada} proved that every grading on $D$ by an abelian
group is isomorphic to an elementary grading. They also determined
its $\mathbb{Z}_2$-graded identities. The algebra \(D_0\), however, presents additional difficulties. This is an eight-dimensional nonunitary algebra and the study of its central polynomials, group gradings and graded identities requires a more delicate analysis.

In this paper, we prove that $\operatorname{Id}^z(D)$ and
$\operatorname{Id}^z(D_0)$ are $T$-ideals and determine finite
generating sets for them. We construct bases for the corresponding
multilinear quotients and obtain explicit formulas for their
central and proper central codimensions.

We also prove that every group grading on $D_0$ is isomorphic to an
elementary grading. We describe all its $\mathbb{Z}_2$-gradings and,
for each of them, determine generators for the
$T_{\mathbb{Z}_2}$-ideal of graded identities, bases for the
multilinear components of the relatively free graded algebra,
and the graded codimension sequence. The resulting eight
superalgebras are pairwise non-PI-equivalent as $\mathbb{Z}_2$-graded algebras.

\section{Preliminaries}

Throughout this paper, $F$ denotes a field of characteristic zero
and all algebras are associative $F$-algebras. Let $F\langle X\rangle$
be the free associative algebra without identity on the countable
set $X=\{x_1,x_2,\ldots\}$. A polynomial
$f=f(x_1,\ldots,x_n)$ is a \emph{ polynomial identity} of an algebra $A$ if
$$f(a_1,\ldots,a_n)=0\mbox{ for all }a_1,\ldots,a_n\in A.$$
The set $\operatorname{Id}(A)$ of all identities of $A$ is a
$T$-ideal, that is, an ideal invariant under all endomorphisms of
$F\langle X\rangle$. If $\operatorname{Id}(A)\neq\{0\}$, then
$A$ is a \emph{PI-algebra}.

A polynomial $f=f(x_1,\ldots,x_n)$ is a \emph{central polynomial} for $A$ if
$$f(a_1,\ldots,a_n)\in Z(A)\mbox{ for all } a_1,\ldots,a_n\in A,$$
where $Z(A)$ is the center of $A$. The central polynomials form a $T$-space $\operatorname{Id}^z(A)$, namely, a vector subspace invariant under all endomorphisms of $F\langle X\rangle$. In general, this space is not an ideal.
The elements of
$\operatorname{Id}^z(A)\setminus\operatorname{Id}(A)$
are called proper central polynomials.

For $S\subseteq F\langle X\rangle$, we write
$\langle S\rangle_T$ and $\langle S\rangle^T$ for the $T$-ideal and the $T$-space generated by $S$, respectively. We use $[u,v]=uv-vu$ and left-normed commutators
$[u_1,\ldots,u_m]=[[u_1,\ldots,u_{m-1}],u_m]$.

For $n\geq1$, let
\[
P_n=\operatorname{span}_F
\{x_{\sigma(1)}\cdots x_{\sigma(n)}\mid \sigma\in S_n\},
\]
where $S_n$ is the symmetric group.
Since $\operatorname{char}F=0$, every $T$-space is generated
by its multilinear elements.
The ordinary, central and proper central codimensions of $A$
are defined by
\[
c_n(A)=\dim_F\frac{P_n}{P_n\cap\operatorname{Id}(A)},
\qquad
c_n^z(A)=\dim_F\frac{P_n}{P_n\cap\operatorname{Id}^z(A)},
\qquad
\delta_n(A)=
\dim_F\frac{P_n\cap\operatorname{Id}^z(A)}
{P_n\cap\operatorname{Id}(A)}.
\]
These definitions give
\begin{equation}
c_n(A)=c_n^z(A)+\delta_n(A),\qquad n\geq1.
\end{equation}

The \emph{variety generated by $A$}, denoted by $\operatorname{var}(A)$,
consists of all algebras satisfying the identities of $A$.
Two algebras $A$ and $B$ are $T$-equivalent, written
$A\sim_T B$, if $\operatorname{Id}(A)=\operatorname{Id}(B)$,
or equivalently,
$\operatorname{var}(A)=\operatorname{var}(B)$. Since $f\in\operatorname{Id}^z(A)$ if and only if $
[f,x_{n+1}]\in\operatorname{Id}(A),$
where $x_{n+1}$ does not occur in $f$,
the space $\operatorname{Id}^z(A)$ depends only on
$\operatorname{Id}(A)$.
For $\mathcal V=\operatorname{var}(A)$, we may therefore set
$c_n(\mathcal V)=c_n(A)$,
$c_n^z(\mathcal V)=c_n^z(A)$ and
$\delta_n(\mathcal V)=\delta_n(A)$.

Regev's theorem \cite{RG} and the inequalities
$c_n^z(A),\delta_n(A)\leq c_n(A)$ imply that all three
sequences are exponentially bounded for every PI-algebra.
Their exponential growth is measured by
\[
\exp(A)=\lim_{n\to\infty}\sqrt[n]{c_n(A)},
\qquad
\exp^z(A)=\lim_{n\to\infty}\sqrt[n]{c_n^z(A)},
\qquad
\exp^\delta(A)=\lim_{n\to\infty}\sqrt[n]{\delta_n(A)},
\]
called the  \emph{ordinary},  \emph{central} and  \emph{proper central exponents}, respectively. For PI-algebras, these limits exist and are nonnegative integers \cite{GiZai2, GiZai}.

An algebra $A$ has \emph{almost polynomial growth of codimensions} if
$c_n(A)$ grows exponentially, whereas $c_n(\mathcal W)$ is
polynomially bounded for every proper subvariety
$\mathcal W$ of $\operatorname{var}(A)$.
Almost polynomial growth of central and proper central codimensions is defined analogously using $c_n^z$ and $\delta_n$, respectively.

In what follows, we denote by $
\mathcal{G}
=
\langle 1,e_1,e_2,\ldots
\mid e_i e_j=-e_j e_i;\ i,j\geq 1\rangle$ the infinite-dimensional unitary Grassmann algebra
over \(F\).

\begin{theorem}[Kemer \cite{Kem}]\label{kemersresult}
Let \(A\) be a PI-algebra such that \(\exp(A)\geq2\). Then either $\operatorname{Id}(A)\subseteq\operatorname{Id}(UT_2)$ or $\operatorname{Id}(A)\subseteq\operatorname{Id}(\mathcal G).$ Consequently, \(\textnormal{var}(UT_2)\) and \(\textnormal{var}(\mathcal G)\) are the only varieties of almost polynomial growth of ordinary codimensions.
\end{theorem}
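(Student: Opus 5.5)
The proof goes through Kemer's structure theory for varieties in characteristic zero, with the growth behaviour read off from the Giambruno–Zaicev characterization of the exponent. \emph{Step 1 (reduction to a finite-dimensional algebra).} Since $\exp(A)\geq 2$, $A$ is not in a variety of polynomial growth, so we may replace $A$ by a generator of $\operatorname{var}(A)$ of a convenient form. Kemer's theory says every variety is generated by the Grassmann envelope $G(B)$ of a finite-dimensional superalgebra $B=B_0\oplus B_1$. By the Giambruno–Zaicev description, $\exp(A)$ is the maximal dimension of a semisimple superalgebra $C_1\oplus\cdots\oplus C_k$ with $C_i$ simple graded subalgebras of $B_{ss}$ and $C_1J\cdots JC_k\neq 0$, where $J$ is the graded Jacobson radical. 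So $\exp(A)\ge 2$ forces one of two configurations:
\begin{enumerate}
\item[(a)] a single simple graded component $C$ with $\dim C\ge 2$;
\item[(b)] two components $C_1,C_2$ with $C_1JC_2\neq 0$.
\end{enumerate}

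\emph{Step 2 (case analysis).} Take configuration (a). If $C_0$ is a matrix algebra $M_n(F)$ with $n\ge 2$, then $UT_2\subseteq C_0\subseteq G(B)$, and $\operatorname{Id}(A)\subseteq \operatorname{Id}(UT_2)$. If the grading on $C$ is nontrivial, then $C$ contains $F\oplus Fc$ with $c$ odd and $c^2=1$, either as $F[c]$ or inside a graded matrix algebra. Its envelope $G(F[c])$ contains $\mathcal G_0+\mathcal G_1c\cong\mathcal G$, so $\operatorname{Id}(A)\subseteq\operatorname{Id}(\mathcal G)$. Now take configuration (b). Choose even idempotents $e_1\in C_1$, $e_2\in C_2$ and a homogeneous $j\in J$ with $e_1je_2\neq 0$. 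If $j$ is even, the span of $e_1,e_2,e_1je_2$ is isomorphic to $UT_2$, which embeds in $G(B)$ via $g\otimes$ even parts, giving the first inclusion. If $j$ is odd, the elements $e_1\otimes 1$, $e_2\otimes 1$ and $e_1je_2\otimes\mathcal G_1$ generate an algebra satisfying all identities of $UT_2$ and generating the same variety. Indeed, $UT_2$ is a homomorphic image of a suitable subalgebra, using a single odd Grassmann generator. In every case $\operatorname{Id}(A)\subseteq\operatorname{Id}(UT_2)$ or $\operatorname{Id}(A)\subseteq\operatorname{Id}(\mathcal G)$.

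\emph{Step 3 (the consequence).} Both $UT_2$ and $\mathcal G$ have $\exp=2$: $c_n(UT_2)=2^{n-1}(n-2)+2$ and $c_n(\mathcal G)=2^{n-1}$. If $\mathcal W$ is a proper subvariety of $\operatorname{var}(UT_2)$ or $\operatorname{var}(\mathcal G)$, then by the dichotomy $\mathcal W$ contains neither $UT_2$ nor $\mathcal G$. This is because each of these generates the whole variety. So $\exp(\mathcal W)\le 1$, and by Kemer's polynomial/exponential dichotomy (exponent at most $1$ means polynomial growth) $\mathcal W$ has polynomial growth. Conversely, any variety of almost polynomial growth has exponential growth, so it contains $\operatorname{var}(UT_2)$ or $\operatorname{var}(\mathcal G)$. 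Minimality then forces equality, and the two varieties are distinct since $\mathcal G$ satisfies $[x_1,x_2,x_3]$ while $UT_2$ does not.

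\emph{Main obstacle.} The delicate point is Step 2(b) with an odd connecting element $j$. There one must verify carefully that the Grassmann envelope really produces $UT_2$ in the variety and not a smaller algebra. I would handle it by exhibiting explicit elements $e_1\otimes1$, $e_2\otimes 1$, $e_1je_2\otimes g$ with $g\in\mathcal G_1$, and checking that the multiplication table coincides with $e_{11},e_{22},e_{12}$.
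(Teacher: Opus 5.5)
The paper gives no proof of this statement; it only quotes it from Kemer. Your argument is therefore a different route, and it is sound. You derive the result from two later and heavier theorems: Kemer's representability theorem $\operatorname{var}(A)=\operatorname{var}(G(B))$, and the Giambruno--Zaicev formula for $\exp(G(B))$. You then locate a copy of $UT_2$ or $\mathcal G$ inside $G(B)$ from the configuration that realizes $\exp\ge 2$. Two points should be made explicit:
\begin{enumerate}
\item You first extend scalars to an algebraically closed field. This changes neither the multilinear identities nor the codimensions.
\item The $C_i$ in the formula are \emph{distinct} graded simple components of $B_{ss}$. So in case (b) you may assume $C_1=Fe_1$ and $C_2=Fe_2$ with $e_1e_2=e_2e_1=0$.
\end{enumerate}
With these, the case analysis is correct. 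Your route gives a short structural proof in which both $UT_2$ and $\mathcal G$ appear naturally. Its cost is that it rests on theorems much deeper than the statement itself.

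Three things need tightening.

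\emph{The ``main obstacle''.} It is not one. Take a single odd Grassmann generator $g$. The span of $e_1\otimes 1$, $e_2\otimes 1$ and $e_1je_2\otimes g$ is already a subalgebra of $G(B)$ isomorphic to $UT_2$, because $e_2e_1=0$ and $(e_1je_2)^2=0$. No homomorphic image is needed.

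\emph{Incomparability in Step~3.} You need the two varieties to be incomparable, not merely distinct. A proper subvariety of $\operatorname{var}(UT_2)$ avoids $\mathcal G$ only because $\mathcal G\notin\operatorname{var}(UT_2)$. This holds since $[x_1,x_2][x_3,x_4]\in\operatorname{Id}(UT_2)$, while for Grassmann generators $[v_1,v_2][v_3,v_4]=4v_1v_2v_3v_4\neq 0$. Your $[x_1,x_2,x_3]$ argument only gives the other non-inclusion.

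\emph{The dichotomy.} Passing from $\exp(\mathcal W)\le 1$ to polynomial growth by quoting Kemer's dichotomy is nearly circular. That dichotomy is classically a corollary of exactly this characterization. You can avoid it with the tools you already have:
\begin{enumerate}
\item $\exp\le 1$ forces $B_{ss}=Fe_1\oplus\cdots\oplus Fe_m$, concentrated in degree $0$. Hence $B_1\subseteq J$, and $e_iJe_k=0$ for $i\neq k$.
\item Evaluate a multilinear monomial on homogeneous basis elements. If the value is nonzero, at most $q-1$ factors are radical, where $J^q=0$.
\item All idempotent factors coincide, since $e_ie_k=0$ and $e_iJe_k=0$ for $i\neq k$.
\item The Grassmann sign depends only on the relative order of the odd variables, and these are radical.
\item So the value depends only on which variables are radical, their relative order, and which gaps between them are nonempty. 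This gives $c_n\le\sum_{k<q}\binom nk k!\,2^{k+1}$, which is polynomial in $n$.
\end{enumerate}
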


Kemer also proved that the codimension sequence of a PI-algebra is either polynomially bounded or grows exponentially.
In particular, no intermediate growth can occur. The corresponding result for central codimensions yields the same two varieties of
almost polynomial growth, that is, \(\operatorname{var}(UT_2)\) and
\(\operatorname{var}(\mathcal G)\); see \cite{GiZai2}.

For later use, we  recall the polynomial identities, central polynomials and
codimension sequences of these two algebras.

\begin{proposition} \cite{Malvec}\label{prop:UT2-central}
The algebra \(UT_2\) has no proper central polynomials and
\[
\operatorname{Id}(UT_2)=\operatorname{Id}^z(UT_2)
=\langle[x_1,x_2][x_3,x_4]\rangle_T.
\]
Moreover,
$c_n(UT_2)=c_n^z(UT_2)=2^{n-1}(n-2)+2$
and $\delta_n(UT_2)=0$ for every \(n\geq1\).
\end{proposition}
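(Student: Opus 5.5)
The proof splits into three steps: the description of $\operatorname{Id}(UT_2)$, the equality $\operatorname{Id}^z(UT_2)=\operatorname{Id}(UT_2)$, and the codimension count.

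\emph{Step 1: the $T$-ideal.} Let $I=\langle[x_1,x_2][x_3,x_4]\rangle_T$. Commutators evaluated in $UT_2$ lie in the Jacobson radical $Fe_{12}$, and $e_{12}^2=0$. Hence $[x_1,x_2][x_3,x_4]$ is an identity, so $I\subseteq\operatorname{Id}(UT_2)$.

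For the reverse inclusion, work modulo $I$ in the multilinear space $P_n$. By the Poincaré--Birkhoff--Witt argument, $P_n$ modulo $I$ is spanned by the monomials $x_{i_1}\cdots x_{i_n}$ with $i_1<\dots<i_n$ and by the products
\[x_{i_1}\cdots x_{i_m}[x_k,x_{j_1},\ldots,x_{j_{r}}],\qquad i_1<\dots<i_m,\quad k>j_1<\dots<j_r,\quad r\geq1.\]
Here we use that, modulo $I$, a product of two commutators vanishes. We also use the identity $[y_1,y_2][y_3,y_4]\equiv0$, which gives $[u,y,z]\equiv[u,z,y]$ for $u$ a commutator. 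This lets us symmetrize the tail of each commutator.

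Counting: a commutator on a subset $S$ with $|S|=s\geq2$ is determined by the choice of $k$, which must be the largest element of $S$ in order to be nonzero after reordering. More precisely, $x_k$ ranges over the non-minimal elements, giving $s-1$ choices. So the spanning set has size
\[1+\sum_{s=2}^{n}\binom ns (s-1)=2^{n-1}(n-2)+2.\]
Next we show these elements are linearly independent modulo $\operatorname{Id}(UT_2)$. Take a nontrivial linear combination and pick a term with a commutator on a maximal $S$ with leading index $k$. Substitute $x_k=e_{12}$, $x_j=e_{22}$ for $j\in S\setminus\{k\}$ whose index is smaller than $k$, and $x_i=e_{11}+e_{22}$ (i.e.\ $1$) outside $S$, adjusting $e_{11}/e_{22}$ as needed. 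This isolates the coefficient, which is the standard argument. Hence the elements form a basis, $\operatorname{Id}(UT_2)=I$, and $c_n(UT_2)=2^{n-1}(n-2)+2$. This independence step is the main technical obstacle: the substitutions must kill all other basis elements, and we handle it by ordering by $|S|$ first and then by the leading index.

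\emph{Step 2: no proper central polynomials.} We have $Z(UT_2)=F(e_{11}+e_{22})$. Let $f\in P_n$ be central. If $f\notin\operatorname{Id}(UT_2)$, evaluate at $x_i=e_{11}+e_{22}$ after writing $f$ in the basis above.
\begin{itemize}
\item If the coefficient of $x_1\cdots x_n$ is nonzero, the substitution $x_1=e_{11}$, all other $x_i=1$, produces a value with nonzero $e_{11}$ component and zero $e_{22}$ component, which is not scalar.
\item Otherwise $f$ is a combination of commutator terms, all of whose values lie in $Fe_{12}$. By the independence argument of Step 1 some evaluation of $f$ is nonzero, hence is a nonzero element of $Fe_{12}$, and such an element is not central.
\end{itemize}
This contradiction gives $\operatorname{Id}^z(UT_2)=\operatorname{Id}(UT_2)$. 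Since the two spaces coincide, $\delta_n(UT_2)=0$ and $c_n^z(UT_2)=c_n(UT_2)$ by the relation $c_n=c_n^z+\delta_n$, which completes the statement.
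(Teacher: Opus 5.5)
Your proof is correct and is the standard argument behind the cited result, which the paper only quotes without proof: the Mal'tsev basis $x_1\cdots x_n$, $x_{i_1}\cdots x_{i_m}[x_k,x_{j_1},\ldots,x_{j_r}]$ with $k>j_1<\cdots<j_r$, together with your observation that every non-identity takes a nonzero value in $Fe_{11}$ or in $Fe_{12}$, and hence never in $Z(UT_2)=F(e_{11}+e_{22})$. The one step to tighten is the separating evaluation: send \emph{every} variable of $S\setminus\{k\}$ to $e_{22}$, not only those below $k$; then every other basis element, including $x_1\cdots x_n$, vanishes outright and no extremal choice of $S$ is needed, whereas sending some of them to $e_{11}$ would let terms with a smaller commutator set survive (e.g.\ $x_3[x_2,x_1]$ under the evaluation meant for $[x_2,x_1,x_3]$), which your maximality choice does not control.
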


We shall also use the following description of the Grassmann algebra.

\begin{proposition}[\cite{Brandao}]\label{prop:Grassmann-central}
For the algebra \(\mathcal G\), the following statements hold:
\begin{enumerate}
    \item[\textnormal{\(1\)}]
    \(\operatorname{Id}(\mathcal G)
      =\langle[x_1,x_2,x_3]\rangle_T\).

    \item[\textnormal{\(2\)}]
    \(\operatorname{Id}^z(\mathcal G)
      =\langle x_1[x_2,x_3,x_4],[x_1,x_2]\rangle^T\).

    \item[\textnormal{\(3\)}]
    \(c_n(\mathcal G)=2^{n-1}\) and
    \(c_n^z(\mathcal G)=\delta_n(\mathcal G)=2^{n-2}\), for every
    \(n\geq2\).
\end{enumerate}
\end{proposition}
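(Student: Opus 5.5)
The three items follow a common pattern: use Proposition \ref{prop:Grassmann-central}(1) to reduce to a basis of proper multilinear polynomials, and then decide exactly which combinations are central by evaluating on generators of \(\mathcal G\).

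\textbf{Item (1).} Take \(I=\langle[x_1,x_2,x_3]\rangle_T\). Clearly \(I\subseteq\operatorname{Id}(\mathcal G)\): commutators of elements of \(\mathcal G\) lie in the even part, which is central.

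For the reverse inclusion, work modulo \(I\). Two consequences of \([x_1,x_2,x_3]\) are
\[
[x_1,x_2][x_3,x_4]+[x_1,x_3][x_2,x_4]\in I
\qquad\text{and}\qquad
[x_1,x_2]x_3=x_3[x_1,x_2]\ \text{mod } I.
\]
Hence \(P_n\) modulo \(I\) is spanned by the products
\[
x_{i_1}\cdots x_{i_k}[x_{j_1},x_{j_2}]\cdots[x_{j_{2m-1}},x_{j_{2m}}],
\qquad i_1<\dots<i_k,\quad j_1<\dots<j_{2m}.
\]
There are \(\sum_{m}\binom{n}{2m}=2^{n-1}\) of them. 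They are linearly independent modulo \(\operatorname{Id}(\mathcal G)\): substituting \(x_i\mapsto 1+e_i\) for \(i\) in the left block and \(x_j\mapsto e_j\) for \(j\) in the commutator block isolates a single monomial \(e_{j_1}\cdots e_{j_{2m}}\) with nonzero coefficient, and a triangularity argument over the subsets finishes it. This gives both \(\operatorname{Id}(\mathcal G)=I\) and \(c_n(\mathcal G)=2^{n-1}\).

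\textbf{Item (2).} Let \(J=\langle x_1[x_2,x_3,x_4],[x_1,x_2]\rangle^T\). Then \(J\subseteq\operatorname{Id}^z(\mathcal G)\): commutators are central, and \(x_1[x_2,x_3,x_4]\) is an identity.

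The key step is to show that \(J\) contains \(I\) and all products of commutators \(\prod[x,x]\). Note that \(J\) is only a \(T\)-space, so multiplication is not available for free. Three observations handle this:
\begin{itemize}
\item[(a)] A substitution into \([x_1,x_2]\) gives \([x_1x_2,x_3]\), and expanding yields \(x_1[x_2,x_3]+[x_1,x_3]x_2\).
\item[(b)] Applying \(x_1[x_2,x_3,x_4]\) together with \([x_1,x_2]\) produces products \(u[x_2,x_3,x_4]\) and \([x_2,x_3,x_4]u\), so \(I\) lies in \(J\) by linearizing.
\item[(c)] Modulo \(I\), the identity \([x_1,x_2][x_3,x_4]=\tfrac12[x_1,x_2[x_3,x_4]]\) holds, because \([x_3,x_4]\) is central modulo \(I\). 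Iterating this shows that every product of commutators lies in \(J\).
\end{itemize}

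Among the spanning elements of item (1), the central ones modulo \(\operatorname{Id}(\mathcal G)\) are exactly the pure commutator products (those with \(k=0\)). Indeed, if \(k\geq1\), the substitution \(x\mapsto e\) for the leading variables followed by commuting with a fresh odd \(e\) gives a nonzero result. So any central \(f\), written in the basis, must involve only pure commutator products. Since these lie in \(J\), we get \(\operatorname{Id}^z(\mathcal G)=J\).

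\textbf{Item (3).} Counting, \(\delta_n\) equals the number of subsets of \(\{1,\dots,n\}\) of positive even size, namely \(2^{n-1}-1\). This conflicts with the stated value \(2^{n-2}\), so the central basis must be restricted further. The correct refinement is that \(x_1\cdots\) with \(k\geq1\) is not central, whereas a product with \(k=1\) might become central if the lone variable is even. I would therefore recheck the count by parity: the central elements are those whose Grassmann evaluation is even, which corresponds to \(n-k\) even and \(k\) \ldots{} the precise counting, matching \(c_n^z=\delta_n=2^{n-2}\), is the step I expect to be the main obstacle. It requires the exact characterization of which basis combinations are central, derived from the evaluation argument in item (2).
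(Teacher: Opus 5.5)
The paper does not prove this proposition; it quotes it from \cite{Brandao}. Judged on its own, your item (1) is the standard Krakowski--Regev argument and is fine. In (c) the factor $\tfrac12$ is wrong, since $[x_1,x_2[x_3,x_4]]\equiv[x_1,x_2][x_3,x_4]$ modulo $\operatorname{Id}(\mathcal G)$, but this is harmless.

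The genuine gap is in item (2). You claim that a central polynomial, written in your basis, can only involve the terms with $k=0$. Checking that each basis element with $k\geq1$ is noncentral on its own says nothing about linear combinations, and central combinations of such elements do exist. Your own observation (a) gives
\[
[x_1x_2,x_3]\equiv x_1[x_2,x_3]+x_2[x_1,x_3]\pmod{\operatorname{Id}(\mathcal G)},
\]
a central polynomial built entirely from $k=1$ basis elements. There is at most one basis element with $k=0$ (all variables inside commutators, and only for $n$ even), so your characterization would give $\delta_3=0$. In fact the central elements of $P_3$ modulo identities are exactly
\[
\beta_1x_1[x_2,x_3]+\beta_2x_2[x_1,x_3]+\beta_3x_3[x_1,x_2]\quad\text{with}\quad \beta_1-\beta_2+\beta_3=0,
\]
so $\delta_3=2$. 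Your count $2^{n-1}-1$ counts yet another set, the basis elements containing at least one commutator. The parity criterion you gesture at in item (3) is the right one: a multilinear $f$ is central iff it vanishes whenever an odd number of variables take odd values. But it must be applied to linear combinations, not to basis elements one at a time. As it stands, the inclusion $\operatorname{Id}^z(\mathcal G)\subseteq J$ is not established and item (3) is not proved.

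What is missing is an upper bound for $\dim P_n/(P_n\cap J)$ that matches a lower bound for $c_n^z(\mathcal G)$. Write $x_I$ for the increasing product of the variables indexed by $I$.

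\textbf{Upper bound.} Take a basis element $x_I[x_1,x_k]C$ in which $x_1$ lies in a commutator, where $C$ is the product of the remaining commutators. If $I=\emptyset$, it is a product of commutators and hence lies in $J$ by (c). Otherwise $[x_1x_IC,x_k]\in J$, and modulo $\operatorname{Id}(\mathcal G)$
\[
[x_1x_IC,x_k]\equiv x_I[x_1,x_k]C+\sum_{i\in I}x_1x_{I\setminus\{i\}}[x_i,x_k]C .
\]
Hence $P_n$ is spanned modulo $J$ by the $2^{n-2}$ basis elements whose commutator variables form an even subset $S\subseteq\{2,\ldots,n\}$.

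\textbf{Lower bound.} These $2^{n-2}$ elements are linearly independent modulo $\operatorname{Id}^z(\mathcal G)$. Let $S_0$ be minimal among the sets with nonzero coefficient, and substitute $x_1\mapsto e_1$, $x_j\mapsto e_j$ for $j\in S_0$, and $x_i\mapsto 1$ otherwise. Every other term with nonzero coefficient has $S\not\subseteq S_0$, so it contains a commutator with an entry sent to $1$ and vanishes. The $S_0$ term goes to a nonzero multiple of $e_1\prod_{j\in S_0}e_j$, which is odd and therefore not central.

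Together these give $P_n\cap\operatorname{Id}^z(\mathcal G)=P_n\cap J$, which proves (2). They also give $c_n^z(\mathcal G)=2^{n-2}$ and $\delta_n(\mathcal G)=2^{n-1}-2^{n-2}=2^{n-2}$, which proves (3).
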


For the algebras $D$ and $D_0$ defined in the introduction, the corresponding result for proper central codimensions is the following.

\begin{theorem}[\cite{GiamDaniPolc}]\label{thm:proper-central-growth}
Let \(A\) be a PI-algebra such that \(\exp^\delta(A)\geq2\). Then either
$\operatorname{Id}(A)\subseteq\operatorname{Id}(D)$ or $\operatorname{Id}(A)\subseteq\operatorname{Id}(D_0)$ or $\operatorname{Id}(A)\subseteq\operatorname{Id}(\mathcal G).$ Moreover, \(\textnormal{var}(D)\), \(\textnormal{var}(D_0)\) and \(\textnormal{var}(\mathcal G)\) are
the only varieties of almost polynomial growth of proper central codimensions.
\end{theorem}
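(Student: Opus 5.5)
The plan is to reduce to Kemer's dichotomy, then analyse finite-dimensional algebras structurally, and finally check minimality separately.

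\emph{Reduction.} Since $\delta_n(A)\le c_n(A)$, the hypothesis $\exp^\delta(A)\ge 2$ gives $\exp(A)\ge 2$. By Theorem~\ref{kemersresult}, either $\operatorname{Id}(A)\subseteq\operatorname{Id}(\mathcal G)$, and we are done, or $\mathcal G\notin\operatorname{var}(A)$. In the second case we use the standard consequence of Kemer's theory: $A$ is $T$-equivalent to a finite-dimensional algebra. Both $\operatorname{Id}^z(A)$ and $\delta_n(A)$ depend only on $\operatorname{Id}(A)$, so we may assume $\dim A<\infty$ and, after extending scalars, that $F$ is algebraically closed. Take a Wedderburn--Malcev decomposition $A=B_1\oplus\cdots\oplus B_k+J$ with $B_i\cong M_{n_i}(F)$.

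\emph{Structural step (main obstacle).} Here I expect an analogue of the Giambruno--Zaicev description of $\exp(A)$. We would show that $\exp^\delta(A)$ is the largest $\dim(B_{i_1}\oplus\cdots\oplus B_{i_r})$ over chains satisfying
\[
B_{i_1}JB_{i_2}J\cdots JB_{i_r}\neq 0,
\]
subject to the extra condition that some multilinear polynomial, evaluated along the chain, gives a nonzero value commuting with all of $A$. The lower bound comes from an explicit construction of proper central polynomials alternating on the semisimple parts. The upper bound is a Capelli/Regev-type count.

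Since $\mathcal G\notin\operatorname{var}(A)$, one cannot have $n_i\ge 2$ with the commutator behaviour of $\mathcal G$. So $\exp^\delta\ge 2$ should force a chain built from two one-dimensional components $Fe, Ff$ with nonzero products through $J$. The centrality requirement then splits into two cases:
\begin{itemize}
\item[(i)] the central value is obtained through a loop $e\,J\,f\,J\,e$ in which $e$ behaves like a unit on the relevant piece; this produces a subalgebra $Fe+Ff+eJf+fJe'+\cdots$ that maps onto $D$;
\item[(ii)] no such unit is available, and the central value lives in $J^3$ as $j_1\,e\,j_2\,f\,j_3$; this produces a copy of $D_0$.
\end{itemize}
Carefully isolating these configurations, modulo identities, is the delicate part. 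In each case we conclude $\operatorname{Id}(A)\subseteq\operatorname{Id}(D)$ or $\operatorname{Id}(A)\subseteq\operatorname{Id}(D_0)$.

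\emph{Almost polynomial growth.} For each of $\mathcal G$, $D$ and $D_0$ two things must be checked.
\begin{itemize}
\item[(a)] $\exp^\delta=2$. For $\mathcal G$ this is Proposition~\ref{prop:Grassmann-central}. For $D$ and $D_0$, we exhibit explicit families of proper central polynomials whose span in $P_n$ grows like $2^n$ modulo identities.
\item[(b)] Every proper subvariety $\mathcal W$ has polynomially bounded $\delta_n$. Suppose $\mathcal W\subsetneq\operatorname{var}(D)$ had $\exp^\delta\ge 2$. By the dichotomy above, $\operatorname{Id}(\mathcal W)$ would be contained in the identities of one of $\mathcal G$, $D$, $D_0$. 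We rule out $\mathcal G$ and $D_0$ by exhibiting identities of $D$ that fail in them, which shows the three varieties are pairwise incomparable. This forces $\mathcal W=\operatorname{var}(D)$, a contradiction, and the integrality of $\exp^\delta$ then gives polynomial growth. The same argument applies to $D_0$ and $\mathcal G$.
\end{itemize}
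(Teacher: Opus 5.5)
The paper gives no proof of this statement: it is quoted from \cite{GiamDaniPolc}, so your sketch has to stand on its own, and it does not yet. Your reduction to a finite-dimensional algebra over an algebraically closed field is standard. The pairwise incomparability in (b) is also correct: $[x_1,x_2][x_3,x_4][x_5,x_6]$ vanishes on $D$ but not on $D_0$, and $x_1[x_2,x_3][x_4,x_5]x_6$ vanishes on $D_0$ but not on $D$. Moreover, $[x_1,x_2,x_3]\in\operatorname{Id}(\mathcal G)$ fails on both, while $\mathcal G$ lies in no variety generated by a finite-dimensional algebra.

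The gap is the structural step. It carries the entire content of the theorem, and you only announce it. Worse, the characterization of $\exp^\delta$ you propose is false as stated. Take $A=D\times UT_3$. Since $D\subseteq UT_3$, we have $\operatorname{Id}(A)=\operatorname{Id}(UT_3)$. Also, $UT_3$ has no proper central polynomials: evaluating at $x_i=e_{11}$ shows that a central multilinear $f$ has coefficient sum $0$, so all its values are strictly upper triangular, hence zero. Thus $\delta_n(A)=0$. Yet the chain $F(e_{11}+e_{33})\,J\,Fe_{22}\,J\,F(e_{11}+e_{33})$ inside the factor $D$ gives $x_1x_2\mapsto(e_{12},0)(e_{23},0)=(e_{13},0)$, a nonzero element of $Z(A)$. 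So a nonzero central value at one evaluation proves nothing. What you must control is the space of polynomials central under \emph{all} substitutions, modulo $\operatorname{Id}(A)$. Proving that its exponential growth forces configuration (i) or (ii) inside $\operatorname{var}(A)$ is the real work, and your (i)/(ii) describe it only heuristically.

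There is also a concretely wrong step. Every finite-dimensional algebra satisfies a Capelli identity, so $\mathcal G\notin\operatorname{var}(A)$ gives no information about simple components $M_k(F)$ with $k\geq2$. For $A=M_2(F)$ one has $J=0$ and $\exp^\delta(A)=4$, so $\exp^\delta\geq2$ does not force a chain of two one-dimensional components. This case is easy once isolated:
\begin{itemize}
\item $D$ satisfies $\operatorname{St}_4$: it is alternating, and on four distinct basis elements of $D$ the only nonzero words are $(e_{11}+e_{33})e_{12}e_{22}e_{23}$ and $e_{12}e_{22}e_{23}(e_{11}+e_{33})$, which occur with opposite signs.
\item $D$ satisfies Hall's identity, since $[a,b]^2\in J(D)^2=Fe_{13}\subseteq Z(D)$.
\end{itemize}
By Drensky's theorem, $\operatorname{Id}(A)\subseteq\operatorname{Id}(M_2(F))\subseteq\operatorname{Id}(D)$ whenever $A$ contains some $M_k(F)$ with $k\geq2$.

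Finally, in (b), the integrality of $\exp^\delta$ does not give polynomial boundedness. The bound $\exp^\delta(\mathcal W)\leq1$ alone does not exclude superpolynomial subexponential growth, so you must invoke (or prove) the polynomial/exponential dichotomy for $\delta_n$.
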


The polynomial--exponential growth dichotomy also holds for proper central codimensions: the sequence of a PI-algebra is either polynomially bounded or grows exponentially.

\subsection{Graded identities and graded codimensions}

Let \(G\) be a multiplicative group with identity element \(1_G\). An algebra
\(A\) is said to be \emph{\(G\)-graded} if it admits a vector space
decomposition
\[
A=\bigoplus_{g\in G}A_g
\]
such that \(A_gA_h\subseteq A_{gh}\) for all \(g,h\in G\). The subspace
\(A_g\) is called the \emph{homogeneous component of degree \(g\)} and a
nonzero element \(a\in A_g\) is said to be \emph{homogeneous of degree \(g\)};
in this case, we write \(\deg_G a=g\).
For each $g\in G$, take a countable set
$X_g=\{x_i^g\mid i\geq1\}$, with these sets pairwise disjoint,
and put $X^G=\bigcup_{g\in G}X_g$.
The free associative algebra $F\langle X^G\rangle$ is
$G$-graded by
\[
\deg_G(x_{i_1}^{g_1}\cdots x_{i_m}^{g_m})
=g_1\cdots g_m.
\]
A polynomial $f=f(x_1^{g_1},\ldots,x_m^{g_m})$ is a \emph{graded
identity of $A$} if $f(a_1,\ldots,a_m)=0$ whenever
$a_i\in A_{g_i}$.
The graded identities form a \emph{$T_G$-ideal}
$\operatorname{Id}^G(A)$, that is, an ideal invariant under
all graded endomorphisms of $F\langle X^G\rangle$.
We denote by $\langle S\rangle_{T_G}$ the $T_G$-ideal
generated by a set $S$.

Assume that $G$ is finite. For $n\geq1$, set
\[
P_n^G=\operatorname{span}_F
\left\{
x_{\sigma(1)}^{g_{\sigma(1)}}\cdots
x_{\sigma(n)}^{g_{\sigma(n)}}
\ \middle|\
\sigma\in S_n,\ g_1,\ldots,g_n\in G
\right\}.
\]
The $n$th graded codimension of $A$ is
\[
c_n^G(A)=
\dim_F\frac{P_n^G}{P_n^G\cap\operatorname{Id}^G(A)}.
\]
For every $n\geq1$,
\[
c_n(A)\leq c_n^G(A)\leq |G|^n c_n(A);
\]
see \cite{GiamReg}.
Thus the graded codimension sequence of a PI-algebra is
exponentially bounded.

For $G=\mathbb Z_2$, we write $F\langle Y\cup Z\rangle$, where $X_{\overline{0}}=Y=\{y_1,y_2,\ldots\}$ consists of the variables of degree $0$ and $X_{\overline{1}}=Z=\{z_1,z_2,\ldots\}$ consists of the variables of degree $1$. We also say that the variables in $Y$ are even and those of $Z$ are odd. 

\begin{remark}\label{reflectionidentities}
Let $\rho:UT_n\to UT_n$ be the anti-automorphism defined by $\rho(e_{ij})=e_{n-j+1,n-i+1},$ for all $1\leq i\leq j\leq n.$ If $B=B_{0}\oplus B_{1}$ is a $\mathbb Z_2$-graded
subalgebra with $\rho(B)=B$, let $B^\rho$ denote the same algebra with homogeneous components
$(B^\rho)_{\bar\imath}=\rho(B_{\bar\imath})$.

For $f\in F\langle Y\cup Z\rangle$, let $f^*$ be obtained
by reversing every monomial of $f$. Then
$f\in\operatorname{Id}^{\mathbb Z_2}(B)$ if and only if $f^*\in\operatorname{Id}^{\mathbb Z_2}(B^\rho).$
Consequently, $\operatorname{Id}^{\mathbb Z_2}(B)
=\langle f_1,\ldots,f_k\rangle_{T_{\mathbb Z_2}}$ if and only if $\operatorname{Id}^{\mathbb Z_2}(B^\rho)
=\langle f_1^*,\ldots,f_k^*\rangle_{T_{\mathbb Z_2}},$ and
$c_n^{\mathbb Z_2}(B)=c_n^{\mathbb Z_2}(B^\rho)$
for every $n\geq1$.
\end{remark}

\section{Central polynomials of $D$ and $D_0$}

We now determine the central polynomials and the central and proper central codimensions of $D$ and $D_0$. First, we study the central polynomials of the algebra
\[
D=
\left\{
\begin{pmatrix}
a & c & d\\
0 & b & e\\
0 & 0 & a
\end{pmatrix}
\mid 
a,b,c,d,e\in F
\right\}.
\] Set $p=[x_1,x_2][x_3,x_4].$

\begin{theorem}
\label{thm:central-D}
For the algebra $D$, the following statements hold.
\begin{enumerate}
\item
The space of central polynomials of $D$ is the $T$-ideal $\operatorname{Id}^z(D)
=
\operatorname{Id}(UT_2)
=
\langle p\rangle_T.$

\item
For every $n\geq 4$, $c_n^z(D)=2^{n-1}(n-2)+2$ and
$\delta_n(D)=2^{n-2}(n^2-5n+4)+n-1+\binom{n}{2}-\binom{n}{3}.$\end{enumerate}
\end{theorem}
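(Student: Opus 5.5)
The plan is to pin down the center of $D$ first, then sandwich $\operatorname{Id}^z(D)$ between $\langle p\rangle_T$ and $\operatorname{Id}(UT_2)$, and finally obtain the codimensions by subtraction from Gordienko's formula for $c_n(D)$.

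\textbf{The center of $D$.} Write $J=Fe_{12}+Fe_{13}+Fe_{23}$ for the Jacobson radical of $D$. Then $D/J\cong F\oplus F$ is commutative, so every commutator of elements of $D$ lies in $J$. Moreover $J^2=Fe_{13}$. A direct check on the basis gives
\[
Z(D)=F\cdot 1_D+Fe_{13},\qquad 1_D=e_{11}+e_{22}+e_{33}.
\]
The key point is that $e_{13}$ commutes with $e_{11}+e_{33}$, since both products equal $e_{13}$. The same computation shows that $Fe_{13}$ is a two-sided ideal of $D$.

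\textbf{Step 1: $\langle p\rangle_T\subseteq\operatorname{Id}^z(D)$.} Any evaluation of $p=[x_1,x_2][x_3,x_4]$ lies in $J\cdot J=Fe_{13}$. Any element of the $T$-ideal $\langle p\rangle_T$ evaluates into the ideal generated by $J^2$, which is again $Fe_{13}\subseteq Z(D)$. Hence every element of $\langle p\rangle_T$ is central for $D$.

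\textbf{Step 2: $\operatorname{Id}^z(D)\subseteq\operatorname{Id}(UT_2)$.} Consider the subalgebra
\[
B=F(e_{11}+e_{33})+Fe_{22}+Fe_{12}\subseteq D .
\]
Under $e_{11}+e_{33}\mapsto e_{11}$, $e_{22}\mapsto e_{22}$, $e_{12}\mapsto e_{12}$, the algebra $B$ is isomorphic to $UT_2$. This uses $e_{12}(e_{11}+e_{33})=0$ and $(e_{11}+e_{33})e_{12}=e_{12}$.

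Now let $f\in\operatorname{Id}^z(D)$ and evaluate it on $B$. The values lie in
\[
Z(D)\cap B=F\cdot 1_D=F\cdot 1_B\subseteq Z(B),
\]
so $f$ is a central polynomial of $UT_2$. By Proposition~\ref{prop:UT2-central}, $UT_2$ has no proper central polynomials, so $f\in\operatorname{Id}(UT_2)=\langle p\rangle_T$.

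Combining the two steps gives $\operatorname{Id}^z(D)=\operatorname{Id}(UT_2)=\langle p\rangle_T$, which is item (1). In particular $\operatorname{Id}^z(D)$ is a $T$-ideal.

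\textbf{Step 3: the codimensions.} Item (1) gives $P_n\cap\operatorname{Id}^z(D)=P_n\cap\operatorname{Id}(UT_2)$. Hence $c_n^z(D)=c_n(UT_2)=2^{n-1}(n-2)+2$ by Proposition~\ref{prop:UT2-central}. Then the relation $c_n=c_n^z+\delta_n$ from the preliminaries gives $\delta_n(D)=c_n(D)-c_n^z(D)$.

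\textbf{Main obstacle.} Everything now reduces to the value of $c_n(D)$, for which I would invoke Gordienko's description of $\operatorname{Id}(D)$ and its codimensions~\cite{Gordienko}. The only real work is checking that his formula equals
\[
2^{n-1}(n-2)+2+2^{n-2}(n^2-5n+4)+n-1+\binom n2-\binom n3 \qquad (n\ge4).
\]
Should the published form differ, I would instead compute $\dim\bigl(P_n\cap\langle p\rangle_T\bigr)/\bigl(P_n\cap\operatorname{Id}(D)\bigr)$ directly. The plan for that is to use Gordienko's generators of $\operatorname{Id}(D)$ to reduce multilinear elements of $\langle p\rangle_T$ to a normal form of products of commutators whose values land in $Fe_{13}$. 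Linear independence of these normal forms would then be shown by evaluations on the matrix units of $D$, and counting them would give the stated $\delta_n(D)$.
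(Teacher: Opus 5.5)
Your proposal is correct and follows the paper's proof. The inclusion $\langle p\rangle_T\subseteq\operatorname{Id}^z(D)$ comes from $[D,D][D,D]\subseteq Fe_{13}$; the reverse inclusion comes from passing to a copy of $UT_2$ and applying Proposition~\ref{prop:UT2-central}; and $\delta_n(D)=c_n(D)-c_n^z(D)$ then uses Gordienko's formula for $c_n(D)$, which the paper also takes from the literature without rederiving it. The only difference is that you use the subalgebra $B\cong UT_2$ where the paper uses the surjection $\pi\colon D\to UT_2$. Both work, and your explicit computation of $Z(D)\cap B$ is unnecessary, since any element of $Z(D)\cap B$ automatically commutes with $B$.
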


\begin{proof}
Let $I=\langle p\rangle_T$. Since
$[D,D][D,D]\subseteq Fe_{13}$ and $Fe_{13}$ is a central
two-sided ideal of $D$, we have $D[D,D][D,D], [D,D][D,D]D\subseteq Fe_{13}$. Thus, every polynomial in $I$ takes its
values in $Fe_{13}$ and so $I\subseteq\operatorname{Id}^z(D)$.

Conversely, consider the surjective algebra homomorphism $\pi:D\rightarrow UT_2$ given by
\[
\begin{pmatrix}
a&c&d\\
0&b&e\\
0&0&a
\end{pmatrix}
\mapsto
\begin{pmatrix}
a&c\\
0&b
\end{pmatrix}.
\]
Since $\pi(Z(D))\subseteq Z(UT_2)$, every central polynomial
of $D$ is central on $UT_2$. Thus,
\[
\operatorname{Id}^z(D)
\subseteq
\operatorname{Id}^z(UT_2)
=
\operatorname{Id}(UT_2)
=
I.
\]
This proves the $T$-ideal description in item~$(1)$. Consequently, $c_n^z(D)=c_n(UT_2)= 2^{n-1}(n-2)+2$ and $\delta_n(D)=c_n(D)-c_n^z(D)=2^{n-2}(n^2-5n+4)+n-1+\binom{n}{2}-\binom{n}{3}$, which completes the proof. 
\end{proof}

Now, we consider the algebra
\[
D_0=
\left\{
\begin{pmatrix}
0 & c & d & e\\
0 & a & f & g\\
0 & 0 & b & h\\
0 & 0 & 0 & 0
\end{pmatrix}
\mathrel{\Big|}
a,b,c,d,e,f,g,h\in F
\right\}.
\]
 We begin by recalling a result that will be used throughout this section.

\begin{theorem} \cite{GiamDaniPolc}  \label{identidadesd0}\label{identidadesordinarias} For the algebra $D_0$ we have:
    \begin{enumerate}
        \item[(1)] $\textnormal{Id}(D_0)= \langle x_1[x_2,x_3][x_4,x_5]x_6 \rangle_T$.
        \item[(2)] For every $n\geq 3$, a basis of $P_n$ modulo
$P_n\cap\operatorname{Id}(D_0)$ is given by
\[
x_a x_{i_1}\cdots x_{i_{n-2}}x_b
\quad\text{and}\quad
x_a x_{i_1}\cdots x_{i_r}
[x_{j_1},\ldots,x_{j_s}]x_b.
\]
In the first family, $i_1<\cdots<i_{n-2}$.
In the second family, $r\geq 0$, $s\geq 2$, $r+s=n-2$,
$i_1<\cdots<i_r$ and $j_1>j_2<\cdots<j_s$.
In both families, all indices are distinct and together form
$\{1,\ldots,n\}$.
        \item[(3)] $c_n(D_0)=2^{n-3}n(n-1)(n-4)+2n(n-1)$, for $n\geq 7$. 
    \end{enumerate}
\end{theorem}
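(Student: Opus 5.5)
The plan has three steps: show that $I=\langle x_1[x_2,x_3][x_4,x_5]x_6\rangle_T$ is contained in $\operatorname{Id}(D_0)$, show that the listed elements span $P_n$ modulo $I$, and show that they are linearly independent modulo $\operatorname{Id}(D_0)$. This proves (1) and (2) simultaneously, and (3) is then a count.

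\textbf{Step 1: $I\subseteq \operatorname{Id}(D_0)$.} The polynomial is multilinear, so it suffices to evaluate it on the basis $e_{22},e_{33},e_{12},e_{13},e_{14},e_{23},e_{24},e_{34}$. Write $J$ for the strictly upper triangular part of $D_0$, so $J^4=0$. Every commutator lies in $J$, so a product $[x_2,x_3][x_4,x_5]$ lies in $J^2=\operatorname{span}\{e_{13},e_{14},e_{24}\}$. The first column and the last row of $D_0$ vanish. Hence a nonzero right factor $x_6$ can only act on columns $2,3$, and it must send them into column $3$ or $4$.
\begin{itemize}
\item A value $e_{13}$ of the commutator product must come from $e_{12}e_{23}$. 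Then $x_1$ would have to be multiplied against row $1$, which is impossible since column $1$ of $D_0$ is zero.
\item A value $e_{24}$ must come from $e_{23}e_{34}$. Then $x_6$ would have to act on column $4$, which is impossible since row $4$ is zero.
\item Values in $e_{14}$ are annihilated on both sides.
\end{itemize}
A short case check along these lines shows every evaluation vanishes.

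\textbf{Step 2: spanning.} A monomial of degree $n$ in $P_n$ has the form $x_a w x_b$. I work with the middle word $w$ modulo the condition that a product of two commutators is zero. This is exactly the relation $[x_1,x_2][x_3,x_4]$ of $UT_2$, made available here because $x_a$ and $x_b$ flank it.
\begin{itemize}
\item Reorder $w$ into increasing order. Each transposition $uv\mapsto vu+[u,v]$ produces a commutator.
\item Push commutators to the right using $[u,v]x=x[u,v]+[u,v,x]$, and discard any term containing two commutators.
\item Normalize the single remaining left-normed commutator to the form $[x_{j_1},\ldots,x_{j_s}]$ with $j_1>j_2<\cdots<j_s$. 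Use the Jacobi identity, and the fact that in the presence of the relation the variables after the first two in a commutator may be permuted: $[u,v,x,y]-[u,v,y,x]=[[u,v],[x,y]]$ is a product of commutators.
\end{itemize}
This is the standard argument for $UT_2$, carried out inside the frame $x_a\cdots x_b$.

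\textbf{Step 3: linear independence.} This is the main obstacle. Suppose $f$ is a linear combination of the listed elements that is an identity of $D_0$.
\begin{itemize}
\item To fix the outer pair, substitute $x_a=e_{12}$ and $x_b=e_{34}$, with all middle variables in $\operatorname{span}\{e_{22},e_{33},e_{23}\}$. Every other basis element then has a boundary variable substituted by something that kills the product, for instance an $e_{22}$ or $e_{33}$ in outer position that cannot connect row $1$ to column $4$. This isolates the terms with the given $(a,b)$.
\item Within a fixed $(a,b)$, the middle behaves like $UT_2$ embedded as $\operatorname{span}\{e_{22},e_{33},e_{23}\}$. I then use the known independence argument for $UT_2$. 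Put $x_i=e_{22}$ for $i\in\{i_1,\ldots,i_r\}$, put $x_{j_1}=e_{23}$, and put $x_j=e_{33}$ for the other $j$'s. Comparing coefficients of $e_{14}$, this separates the commutator terms by the set $\{j_1,\ldots,j_s\}$ and by $j_1$.
\item The monomial term $x_a x_{i_1}\cdots x_b$ is detected by substituting $x_i=e_{22}+e_{33}$ for all middle variables, which kills every commutator.
\end{itemize}
The care needed is in checking that these substitutions give a triangular system.

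\textbf{Step 4: the count in (3).} There are $n(n-1)$ ordered choices of $(a,b)$. For each choice, the first family contributes $1$ element. The second family contributes
\[
\sum_{s\ge2}\binom{n-2}{s}(s-1)=(n-4)2^{n-3}+1 .
\]
Multiplying out gives $c_n(D_0)=2^{n-3}n(n-1)(n-4)+2n(n-1)$.
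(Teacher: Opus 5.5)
Your plan is correct in outline, and Steps 1, 2 and 4 are fine. The paper itself only quotes this theorem from the literature, so there is no internal proof to compare with; note also that your count gives the formula for all $n\geq 3$.

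The step that fails as written is the last bullet of Step 3. Once you fix $x_a=e_{12}$ and $x_b=e_{34}$, a surviving term evaluates to $e_{12}Me_{34}$ with $M\in\operatorname{span}\{e_{22},e_{23},e_{33}\}$. This reads off only the $(2,3)$-entry of $M$, so the diagonal part of the usual $UT_2$ argument is invisible. In particular, putting every middle variable equal to $E=e_{22}+e_{33}$ gives $e_{12}Ee_{34}=e_{12}e_{34}=0$, and the coefficient of $x_ax_{i_1}\cdots x_{i_{n-2}}x_b$ is never detected.

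The repair is to use the off-diagonal entry instead. Put one middle variable equal to $e_{23}$ and all the others equal to $E$. Since $E$ commutes with $e_{22},e_{23},e_{33}$, and every commutator in the second family has at least two entries, all commutator terms vanish. The ordered monomial gives $e_{12}e_{23}e_{34}=e_{14}$. Alternatively, change the frame to $x_a=e_{12}$, $x_b=e_{24}$ with all middle variables at $e_{22}$. Since $D_0e_{12}=0=e_{24}D_0$, this still isolates the pair $(a,b)$.

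The commutator bullet also needs a correction. With the outside variables at $e_{22}$, the evaluation does not separate the terms by the set $\{j_1,\ldots,j_s\}$, because $[e_{23},e_{22}]=-e_{23}\neq 0$. For instance, take $n=5$ and
\[
x_4=e_{12},\quad x_5=e_{34},\quad x_3=e_{22},\quad x_2=e_{23},\quad x_1=e_{33}.
\]
Then both $x_4x_3[x_2,x_1]x_5$ and $x_4[x_2,x_1,x_3]x_5$ survive, with values $e_{14}$ and $-e_{14}$.

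What is true is the following. A surviving term must have its outside product with nonzero $(2,2)$-entry, so its outside set is contained in $\{i_1,\ldots,i_r\}$. Hence its commutator length is at least $s$. If the length equals $s$, the term coincides with the chosen one, because the first entry is forced to be $x_{j_1}$ and the remaining entries are then fixed by the increasing order. So the system is triangular only if you eliminate coefficients in decreasing order of $s$. Alternatively, send the outside variables to $E$ instead of $e_{22}$; then the evaluation is diagonal, exactly as in the evaluations the paper uses for $\mathcal A_2$.

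With these two corrections the independence argument, and hence your whole proof, goes through.
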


We now turn to the central polynomials of $D_0$. Since
$Z(D_0)=Fe_{14}$ and $D_0e_{14}=e_{14}D_0=0$, every
central polynomial $f$ of $D_0$ satisfies
$fx_{n+1},x_{n+1}f\in\operatorname{Id}(D_0)$, where
$x_{n+1}$ is a new variable. By substituting arbitrary
polynomials for $x_{n+1}$, it follows that the $T$-space
$\operatorname{Id}^z(D_0)$ is closed under left and right
multiplication in $F\langle X\rangle$ and is therefore
a $T$-ideal.

Suppose that $0\ne f\in P_n\cap\operatorname{Id}^z(D_0)$
with $n\leq4$. Then $fx_{n+1}$ is a nonzero identity of
degree $n+1\leq5$. This contradicts
\[
\operatorname{Id}(D_0)
=
\langle x_1[x_2,x_3][x_4,x_5]x_6\rangle_T,
\]
since every nonzero element of this $T$-ideal has degree
at least six in the nonunital free algebra. Therefore,
$P_n\cap\operatorname{Id}^z(D_0)=0$ for $1\leq n\leq4$.
Moreover,
\[
\operatorname{St}_5(x_1,\ldots,x_5)
\quad\text{and}\quad
[x_1,x_2][x_3,x_4][x_5,x_6]
\]
take their values in $Fe_{14}$. Since
$\operatorname{Id}^z(D_0)$ is a $T$-ideal containing
these polynomials and $\operatorname{Id}(D_0)$, we obtain
\[
I=
\left\langle
x_1[x_2,x_3][x_4,x_5]x_6,\,
\operatorname{St}_5(x_1,\ldots,x_5),\,
[x_1,x_2][x_3,x_4][x_5,x_6]
\right\rangle_T
\subseteq\operatorname{Id}^z(D_0).
\]

 By the standard basis for the relatively free algebra of $UT_3$ (see \cite[Remark~10]{Drensky}), every polynomial in $P_n$ can be written, modulo $P_n\cap I$, as a linear combination of the polynomial $x_{1}\ldots x_n$ and polynomials belonging to the following four families:
\begin{enumerate}
    \item \label{1} $[x_{j_1}, x_{i_2}, \ldots, x_{i_{n}} ]$, with $j_1>i_2<\cdots <i_n $;

    \item \label{2} $x_{i_1}\cdots x_{i_k}[x_{j_1},x_{t_2}, \ldots, x_{t_{n-k}}]$, with $k\geq 1$, $n-k\geq 2$, $i_1<\cdots < {i_k}$, $j_1>t_2<
    \cdots <t_{n-k}$;

    \item \label{3} $[x_{j_1}, x_{i_2}, \ldots, x_{i_k}][x_{j_2}, x_{t_2}, \ldots , x_{t_{n-k}}]$, with $k\geq 2$, $n-k\geq 2$, $j_1>i_2<\cdots < i_{k}$, $j_2>t_2<\cdots <t_{n-k}$;

    \item[(4)]  $x_{l_1}\ldots x_{l_r}[x_{j_1}, x_{i_2}, \ldots, x_{i_k}][x_{j_2}, x_{t_2}, \ldots , x_{t_{n-k-r}}]$, with $r\geq 1,$ $k\geq 2$, $n-k-r\geq 2$, $l_1<\cdots <l_r$, $j_1>i_2<\cdots < i_{k}$, $j_2>t_2<\cdots < t_{n-k-r}$.
\end{enumerate}

In each case, all the indices occurring in the polynomial are distinct and belong to $\{1, \ldots, n\}$. For $s\in\{1,2,3,4\}$, let $\mathcal I_s$ denote the set of admissible choices of indices satisfying the conditions of family $(s)$ above. For $a\in\mathcal I_s$, we denote by $m_a$ the corresponding polynomial of type $(s)$.

The next result separates the first three families from the genuinely new part of the problem. 

\begin{lemma}\label{lemtype123}
Let $f=\alpha x_1\cdots x_n+f_1+f_2+f_3+f_4\in P_n,
$ where $f_i$ is a linear combination of polynomials of type $(i)$, for $i\in \{1,2,3,4\}$. If $
f\in\operatorname{Id}^z(D_0),$
then $$\alpha= f_1=f_2=f_3=0.$$
Consequently, the polynomial $x_1\cdots x_n$ and the polynomials of types $(1)$, $(2)$ and $(3)$ are linearly independent modulo $P_n\cap\operatorname{Id}^z(D_0)$ and remain linearly independent modulo the span of the polynomials of type $(4)$.
\end{lemma}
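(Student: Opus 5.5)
The plan is to use the fact that a central polynomial of $D_0$ must vanish on every subalgebra $B\subseteq D_0$ with $B\cap Fe_{14}=0$: its values there lie in $B\cap Z(D_0)=0$. So $f\in\operatorname{Id}^z(D_0)$ forces $f\in\operatorname{Id}(B)$ for such $B$. I will use two subalgebras. One isolates $\alpha,f_1,f_2$; the other isolates $f_3$ after killing $f_4$.

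First, take $B_1=Fe_{22}+Fe_{23}+Fe_{33}\cong UT_2$. Polynomials of types (3) and (4) contain a product of two commutators. Hence they lie in $\langle[x_1,x_2][x_3,x_4]\rangle_T=\operatorname{Id}(UT_2)$ by Proposition~\ref{prop:UT2-central}, so $f_3,f_4$ vanish on $B_1$. This gives $\alpha x_1\cdots x_n+f_1+f_2\in\operatorname{Id}(UT_2)$. The monomial $x_1\cdots x_n$ together with the elements of types (1) and (2) is exactly the standard basis of $P_n$ modulo $P_n\cap\operatorname{Id}(UT_2)$: it consists of the ordered monomial, the ordered monomial times one ordered commutator, and the commutator alone. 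By the well-known description of the relatively free algebra of $UT_2$, whose dimension count gives $2^{n-1}(n-2)+2$, these elements are linearly independent there. Hence $\alpha=f_1=f_2=0$.

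Second, take $B_2=Fe_{12}+Fe_{13}+Fe_{22}+Fe_{23}+Fe_{33}$, which meets $Fe_{14}$ trivially. In $B_2$ one has $[B_2,B_2][B_2,B_2]\subseteq Fe_{13}$ and $B_2e_{13}=0$, since no element of $B_2$ has a nonzero first column. Therefore every polynomial of type (4), which begins with a variable before the two commutators, vanishes on $B_2$, and we get $f_3\in\operatorname{Id}(B_2)$. Write $f_3=\sum_{a\in\mathcal I_3}c_a m_a$. For a fixed $a$, given by the first commutator on the index set $S$ and the second on $T$, I substitute $x_{j_1}\mapsto e_{12}$, $x_{j_2}\mapsto e_{23}$, $x_t\mapsto e_{22}$ for $t\in S\setminus\{j_1\}$ and $x_t\mapsto e_{33}$ for $t\in T\setminus\{j_2\}$.

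Now consider any $m_b$ of type (3) under this substitution. The product of its two commutators is nonzero only if the first commutator evaluates to a multiple of $e_{12}$ and the second to a multiple of $e_{23}$. This should force the first commutator to contain $e_{12}$ and only $e_{22}$'s, and the second to contain $e_{23}$ and only $e_{33}$'s. In other words, $b$ has the same splitting $(S,T)$ as $a$, and $m_a$ itself evaluates to $\pm e_{13}\neq0$. This reduces the problem to the elements with a fixed splitting.

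Within a fixed splitting, the remaining freedom is the choice of the leading indices $j_1\in S$ and $j_2\in T$ (the rest being increasing). I expect this to be the main obstacle. The plan is to order the admissible pairs $(j_1,j_2)$ and let $x_{j_1},x_{j_2}$ range over the possible positions of $e_{12},e_{23}$. The evaluation matrix is then determined by the one-commutator computation $[x_{j},x_{i_2},\dots,x_{i_k}]$ with one $e_{12}$ and the other entries $e_{22}$. I will show this matrix is triangular with nonzero diagonal; it is essentially the same matrix that proves linear independence of the commutators $[x_{j_1},x_{i_2},\dots]$ modulo $\operatorname{Id}(UT_2)$. If this is awkward, the fallback is to replace $e_{22}$ by $e_{22}+\lambda_t e_{12}$ and $e_{33}$ by $e_{33}+\mu_t e_{23}$ with generic scalars, and compare polynomial coefficients. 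In either case all $c_a=0$, so $f_3=0$.

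The ``consequently'' part then follows formally. If a combination of $x_1\cdots x_n$ and elements of types (1)--(3) lies in $\operatorname{Id}^z(D_0)+\operatorname{span}(\text{type }(4))$, move the type (4) part into $f_4$ and apply the statement just proved.
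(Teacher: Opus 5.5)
Your reduction steps are sound. A central polynomial of $D_0$ vanishes on $B_1=Fe_{22}+Fe_{23}+Fe_{33}\cong UT_2$ and on $B_2$. The known basis of $P_n$ modulo $\operatorname{Id}(UT_2)$ disposes of $\alpha,f_1,f_2$, and $B_2e_{13}=0$ kills type $(4)$ on all of $B_2$. These are the same subalgebras in which the paper's evaluations live; you cite the $UT_2$ basis where the paper re-derives it by explicit substitutions.

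\textbf{The gap.} It is in isolating the type $(3)$ coefficients. Your claim that the substitution forces $m_b$ to have the same splitting $(S,T)$ as $m_a$ is false. You send the non-leading variables of the first commutator to $e_{22}$, and $e_{22}$ does not commute with $e_{23}$, since $[e_{23},e_{22}]=-e_{23}$. So the second commutator of $m_b$ can absorb some of these variables and remain nonzero. Concretely, take $n=5$ and $m_a=[x_3,x_1,x_2][x_5,x_4]$. Your substitution is $x_3\mapsto e_{12}$, $x_1,x_2\mapsto e_{22}$, $x_5\mapsto e_{23}$, $x_4\mapsto e_{33}$. It gives $m_a\mapsto e_{13}$. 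But the type $(3)$ polynomial $[x_3,x_1][x_5,x_2,x_4]$ maps to $e_{12}\,[[e_{23},e_{22}],e_{33}]=-e_{13}$, and $[x_3,x_2][x_5,x_1,x_4]$ likewise maps to $-e_{13}$. So this evaluation does not isolate $c_a$. All you can conclude is that a surviving $m_b$ has $j_1\in S'\subseteq S$.

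\textbf{The repair.} Send the variables of $S\setminus\{j_1\}$ to $h=e_{22}+e_{33}\in B_2$ instead, as the paper does. Then $[e_{12},h]=e_{12}$, $[e_{12},e_{33}]=0$, $[e_{23},h]=0$ and $[e_{23},e_{33}]=e_{23}$. Hence an $e_{33}$-variable in the first commutator, or an $h$-variable in the second, kills the product, and the splitting is forced. Alternatively, keep $e_{22}$ but choose $a$ with $|S|$ minimal among the nonzero coefficients; then $S'\subseteq S$ forces $S'=S$.

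Once the splitting is fixed, the step you flagged as the main obstacle is immediate. No triangularity or generic-scalar argument is needed. A left-normed commutator with a single radical entry and otherwise commuting diagonal entries is nonzero only if that entry sits in the first or second slot. The second slot holds $\min S$ (respectively $\min T$), and $j_1>\min S$, $j_2>\min T$. So the leading indices are forced and the evaluation matrix is the identity. Your final ``consequently'' step is fine as written.
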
  

\begin{proof}
Suppose that
$f=\alpha x_1\cdots x_n+f_1+f_2+f_3+f_4
\in\operatorname{Id}^z(D_0)$,
where, for $i\in\{1,2,3,4\}$, the polynomial $f_i$ is a linear
combination of polynomials of type $(i)$. Thus,
$$
f=\alpha x_1\cdots x_n+
\sum_{a\in\mathcal I_1}\alpha_a m_a
+
\sum_{b\in\mathcal I_2}\beta_b m_b
+
\sum_{c\in\mathcal I_3}\gamma_c m_c
+
\sum_{d\in\mathcal I_4}\sigma_d m_d.
$$
We shall prove successively that $\alpha=0$ and all coefficients
$\beta_b$, $\alpha_a$ and $\gamma_c$ are zero.

Set $h=e_{22}+e_{33}$. By evaluating all variables at $h$, we
obtain $\alpha h\in Z(D_0)=Fe_{14}$ and hence $\alpha=0$.

Now, we consider the polynomials of type $(2)$. Fix
$$
m_b=x_{i_1}\cdots x_{i_k}
[x_{j_1},x_{t_2},\ldots,x_{t_{n-k}}],
$$
and set $L=\{i_1,\ldots,i_k\}$. Consider the evaluation
$\varphi$ defined by
$$
\varphi(x_{i_q})=h,\quad 1\leq q\leq k,\qquad
\varphi(x_{j_1})=e_{23}
\quad\mbox{and}\quad
\varphi(x_{t_q})=e_{33},\quad 2\leq q\leq n-k.
$$
Since $he_{23}=e_{23}$ and $[e_{23},e_{33}]=e_{23}$,
we obtain $\varphi(m_b)=e_{23}$.

We claim that every other polynomial occurring in $f$ is
annihilated by $\varphi$.

Indeed, a polynomial of type $(1)$ contains all variables
inside a single commutator. Since at least one variable is
evaluated at $h$, which commutes with every assigned element,
its value is zero. Moreover, every polynomial of type $(3)$
or $(4)$ contains two commutators, whereas $e_{23}$ is the
only non-diagonal value occurring in the evaluation.
Hence at least one of the two commutators is evaluated
entirely on diagonal elements and is therefore zero.

Now consider another polynomial of type $(2)$ with a nonzero
value. The variable $x_{j_1}$ must occur inside its commutator,
since all other assigned elements are diagonal. No variable
evaluated at $h$ can occur inside that commutator.
Conversely, no variable evaluated at $e_{33}$ can occur
outside it, since the commutator evaluates to a multiple of
$e_{23}$ and $e_{33}e_{23}=0$.
Hence its outside variables are exactly those indexed by $L$.

The variables of its commutator are therefore
$x_{j_1},x_{t_2},\ldots,x_{t_{n-k}}$.
The smallest index is $t_2$ and it must occur in the second
position. Since $t_2<j_1$, a nonzero first bracket forces
$x_{j_1}$ to be the first entry. The increasing order of the
remaining indices then determines precisely $m_b$.
Consequently, $\varphi(f)=\beta_b e_{23}$.
Since $f$ is central and $e_{23}\notin Z(D_0)$, we obtain
$\beta_b=0$. As $b$ was arbitrary, it follows that $f_2=0$.

We now consider the polynomials of type $(1)$. Fix
$m_a=[x_{j_1},x_{i_2},\ldots,x_{i_n}]$.
Consider the evaluation $\psi$ defined by
$\psi(x_{j_1})=e_{23}$ and
$\psi(x_r)=e_{33}$ for $r\neq j_1$.
Then
$\psi(m_a)=[e_{23},e_{33},\ldots,e_{33}]=e_{23}$.

Every commutator of type $(1)$ involves all $n$ variables,
with the smallest index in the second position.
Since $i_2<j_1$, a nonzero first bracket forces $x_{j_1}$
to be the first entry. The increasing order of the remaining
indices then determines precisely $m_a$.
Every polynomial of type $(3)$ or $(4)$ also vanishes
under $\psi$, since there is only one non-diagonal value
available for two commutators.
Since $\alpha=0$ and $f_2=0$, it follows that
$\psi(f)=\alpha_a e_{23}$.
The centrality of $f$ implies $\alpha_a=0$.
As $a$ was arbitrary, we obtain $f_1=0$.

It remains to consider the polynomials of type $(3)$. Fix
$$
m_c=
[x_{j_1},x_{i_2},\ldots,x_{i_k}]
[x_{j_2},x_{t_2},\ldots,x_{t_{n-k}}].
$$
Consider the evaluation $\theta$ defined by
$$
\theta(x_{j_1})=e_{12},\qquad
\theta(x_{i_q})=h,\quad 2\leq q\leq k,\qquad
\theta(x_{j_2})=e_{23}
\quad\mbox{and}\quad
\theta(x_{t_q})=e_{33},\quad 2\leq q\leq n-k.
$$
Since $[e_{12},h]=e_{12}$ and $[e_{23},e_{33}]=e_{23}$,
we have $\theta(m_c)=e_{12}e_{23}=e_{13}$.

We show that every other polynomial of type $(3)$ vanishes
under this evaluation. Since the only non-diagonal values are
$e_{12}$ and $e_{23}$, a nonzero product of the two commutators
must have the first evaluated to a nonzero multiple of
$e_{12}$ and the second to a nonzero multiple of $e_{23}$.

No variable evaluated at $e_{33}$ can occur in the first
commutator, because $[e_{12},e_{33}]=0$.
Similarly, no variable evaluated at $h$ can occur in the
second commutator, because $[e_{23},h]=0$.
Thus the variables of the first commutator are precisely
$x_{j_1},x_{i_2},\ldots,x_{i_k}$ and those of the second are
$x_{j_2},x_{t_2},\ldots,x_{t_{n-k}}$.

In each commutator, the smallest index occurs in the second
position. Since $i_2<j_1$ and $t_2<j_2$, a nonzero first
bracket forces $x_{j_1}$ and $x_{j_2}$ to be the respective
first entries. The increasing order of the remaining
indices determines both commutators uniquely.
Therefore the only polynomial of type $(3)$ having a
nonzero value is $m_c$.

Finally, consider a polynomial of type $(4)$. It contains
a nonempty monomial preceding its two commutators.
For both commutators to be nonzero, the variables evaluated
at $e_{12}$ and $e_{23}$ must occur in different commutators.
Their product can be nonzero only when the first commutator
evaluates to a multiple of $e_{12}$ and the second to a
multiple of $e_{23}$.
Consequently, all variables in the initial monomial are
evaluated at diagonal elements belonging to $Fe_{22}+Fe_{33}$.
Since $(Fe_{22}+Fe_{33})e_{12}=0$, every polynomial of
type $(4)$ vanishes under $\theta$.

Since $\alpha=0$ and $f_1=f_2=0$, we conclude that
$\theta(f)=\gamma_c e_{13}$.
As $f$ is central and $e_{13}\notin Z(D_0)$, it follows
that $\gamma_c=0$.
As $c$ was arbitrary, we obtain $f_3=0$.

Therefore, $\alpha=0$ and $f_1=f_2=f_3=0$, as required.
\end{proof}

Lemma~\ref{lemtype123} shows that the first three families are linearly independent, modulo $P_n\cap \textnormal{Id}^z(D_0)$, and their span has trivial intersection with the span of the residue classes of the type $(4)$ polynomials.
Hence it remains to study the polynomials of type $(4)$.

We first reduce these polynomials to products involving two simple commutators. Throughout the remainder of the section, congruences are taken modulo $$I=\left\langle
x_1[x_2,x_3][x_4,x_5]x_6,\,
St_5(x_1,\ldots,x_5),\,
[x_1,x_2][x_3,x_4][x_5,x_6]
\right\rangle_T.
$$ We use the following consequence repeatedly. For arbitrary
words $u_0,u_1,u_2,u_3$ and arbitrary polynomials
$a_1,\ldots,a_6$, we have
\[
u_0[a_1,a_2]u_1[a_3,a_4]u_2[a_5,a_6]u_3\in I.
\]
Indeed, this follows by induction on the lengths of the intervening
words from the identity
$[a,b]x=[a,bx]-b[a,x]$ and from the fact that
$[x_1,x_2][x_3,x_4][x_5,x_6]\in I.$

\begin{lemma}\label{lem}
Every polynomial of type $(4)$ is congruent, modulo $I$, to a linear combination of polynomials of the form
$$
x_{l_1}\cdots x_{l_r}
[x_{j_1},x_{i_1}]
x_{t_1}\cdots x_{t_s}
[x_{j_2},x_{i_2}],
$$
where $r\geq1$, $s\geq0$,
$l_1<\cdots<l_r$, $t_1<\cdots<t_s$,
$j_1>i_1$ and $j_2>i_2$.
\end{lemma}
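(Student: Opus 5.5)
Take a type $(4)$ polynomial
\[
m=x_{l_1}\cdots x_{l_r}\,[x_{j_1},x_{i_2},\ldots,x_{i_k}]\,[x_{j_2},x_{t_2},\ldots,x_{t_{n-k-r}}],\qquad r\geq1 .
\]
The plan is to straighten the two long commutators into simple ones, pushing the surplus variables into the outer monomials.

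\textbf{Step 1: shorten the second commutator.} We use $[u,x]=ux-xu$, so that
\[
w\,[c,x_{t_q}] = w\,c\,x_{t_q}-w\,x_{t_q}\,c ,
\]
where $c$ is itself a commutator. In $m$ the prefix $w$ always contains the nonempty monomial $x_{l_1}\cdots x_{l_r}$ followed by a commutator $C_1$. The key point is that the generator $x_1[x_2,x_3][x_4,x_5]x_6$ of $I$, with arbitrary substitutions, gives $u\,C\,C'\,v\in I$ for any commutators $C,C'$ and nonempty words $u,v$. Consequently $x_{\vec l}\,C_1\,c\,x_{t_q}\in I$. Applying this repeatedly to $x_{\vec l}\,C_1\,[x_{j_2},x_{t_2},\ldots,x_{t_s}]$ reduces it, modulo $I$, to
\[
\pm\, x_{\vec l}\,C_1\,x_{t_{\ast}}\cdots x_{t_\ast}\,[x_{j_2},x_{t_2}] .
\]
More precisely, one peels off the last variable and moves it between $C_1$ and the inner commutator, and then inducts on length. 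An inner commutator $[a,b]$ with $a,b$ variables stays of the form $[x_p,x_q]$. To move variables across $[x_p,x_q]$ we also use $x[a,b]=[a,b]x+[x,[a,b]]$. Any term containing three commutators lies in $I$ by the consequence noted just before the lemma, and terms with an additional commutator sitting between two nonempty words also die by the first generator.

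\textbf{Step 2: shorten the first commutator.} We treat $x_{\vec l}\,[x_{j_1},x_{i_2},\ldots,x_{i_k}]\,W\,[x_{j_2},x_{t_2}]$, where $W$ is a word, in the same way. Expanding the last entry gives $[c,x]=cx-xc$. The term $x_{\vec l}\,c\,x\,W\,[\,\cdot\,,\,\cdot\,]$ already has the required shape once $c$ is reduced by induction. The term $x_{\vec l}\,x\,c\,W[\cdot,\cdot]$ absorbs $x$ into the left monomial. The extra commutator terms created when reordering contain three commutators, or two commutators flanked by nonempty words on both sides, so they lie in $I$.

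\textbf{Step 3: normalize orderings.} We sort the outer monomials $x_{l_1}\cdots x_{l_r}$ and $x_{t_1}\cdots x_{t_s}$ increasingly, and make $j>i$ inside each simple commutator using antisymmetry. To swap adjacent letters in a monomial we use $x_ax_b=x_bx_a+[x_a,x_b]$. The correction term then has an extra commutator. In the left monomial this produces a nonempty-word-commutator-commutator-commutator or two-commutator pattern that lies in $I$, either via the first generator (the left part stays nonempty only if $r\ge 2$; if the swapped pair is the whole of $x_{\vec l}$ the term is $[\,,\,]C_1W[\,,\,]$, which has three commutators and lies in $I$) or via the triple-commutator consequence. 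In the middle word, a swap yields $x_{\vec l}[\,][\,]\cdots[\,]$ patterns, again with three commutators.

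\textbf{Main obstacle.} The hardest step is the bookkeeping in Step 1 when the middle word $W$ is empty. In that case the terms $x_{\vec l}\,C_1\,c\,x$ require the trailing nonempty word that $x_1[x_2,x_3][x_4,x_5]x_6$ needs, and that word is exactly $x$. I expect this to work, but when $s=0$ and $r\ge1$ one must check carefully that no leftover term of the shape $x_{\vec l}[\,][\,]$ without a right factor escapes the reduction. If such a term does escape, I would handle it using $St_5$. Here $\operatorname{St}_5$ together with the first generator should allow rewriting $x[a,b,c][d,e]$ in terms of $x[\cdot,\cdot]y[\cdot,\cdot]$ forms. This $St_5$ step is the place where I expect the real work to lie.
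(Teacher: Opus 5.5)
Your proposal is correct and follows essentially the paper's route. You shorten the second commutator by peeling off its last entry: the discarded term $x_{\vec l}\,C_1\,c\,x_{t_q}$ is of the form $u[\cdot,\cdot][\cdot,\cdot]v$ with $u,v$ nonempty, and so is its variant with a word between $C_1$ and $c$ once that word is moved across. You then expand the first commutator exactly via $[c,x]=cx-xc$, and order the monomial blocks using the triple-commutator consequence.

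The case you flag as the main obstacle does not arise. Every term discarded in Step 1 carries the peeled variable as a nonempty right factor, and Step 2 discards nothing. Moreover, $x_{\vec l}[x_{j_1},x_{i_1}][x_{j_2},x_{i_2}]$ is itself an allowed target with $s=0$. So no appeal to $\operatorname{St}_5$ is needed.
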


\begin{proof}
Let
$f=x_{l_1}\cdots x_{l_r}
[x_{j_1},x_{i_2},\ldots,x_{i_k}]
[x_{j_2},x_{t_2},\ldots,x_{t_m}]$
be a polynomial of type $(4)$.
We first reduce the second commutator. If $m>2$, write
$[x_{j_2},x_{t_2},\ldots,x_{t_m}]=[C,x_{t_m}]$,
where $C=[x_{j_2},x_{t_2},\ldots,x_{t_{m-1}}]$.
Then
\begin{align*}
f
&=x_{l_1}\cdots x_{l_r}
[x_{j_1},x_{i_2},\ldots,x_{i_k}]Cx_{t_m}-
x_{l_1}\cdots x_{l_r}
[x_{j_1},x_{i_2},\ldots,x_{i_k}]x_{t_m}C.
\end{align*}
The first summand belongs to $I$, since $I$ is a $T$-ideal
containing $x_1[x_2,x_3][x_4,x_5]x_6$. Therefore,
$$
f\equiv
-x_{l_1}\cdots x_{l_r}
[x_{j_1},x_{i_2},\ldots,x_{i_k}]x_{t_m}C
\pmod I.
$$
The same reduction remains valid when a monomial has already
been inserted between the commutators. Indeed, for commutators
$B,C$, nonempty monomials $X,U$ and a variable $x$, we have
$$
XBUCx=XUBCx+X[B,U]Cx\in I,
$$
since both summands belong to $I$.
For empty $U$, the inclusion follows directly from that generator.

Repeating this procedure, we reduce the second commutator to
length two. Hence $f$ is congruent modulo $I$, up to sign, to a
polynomial of the form
$X[x_{j_1},x_{i_2},\ldots,x_{i_k}]U[x_{j_2},x_{i_2'}]$,
where $X$ is a nonempty monomial and $U$ is a possibly empty
monomial.

If $k>2$, write
$[x_{j_1},x_{i_2},\ldots,x_{i_k}]=[A,x_{i_k}]$,
with $A=[x_{j_1},x_{i_2},\ldots,x_{i_{k-1}}]$.
Using $[A,x_{i_k}]=Ax_{i_k}-x_{i_k}A$, we obtain
\begin{align*}
X[A,x_{i_k}]U[x_{j_2},x_{i_2'}]
=XAx_{i_k}U[x_{j_2},x_{i_2'}]-Xx_{i_k}AU[x_{j_2},x_{i_2'}].
\end{align*}
Both summands contain a first commutator of smaller length
and retain a nonempty initial monomial. Repeating this
expansion, we obtain a linear combination of polynomials of
the form
$X[x_{j_1},x_{i_1}]U[x_{j_2},x_{i_2}]$.
By antisymmetry of the commutator, we may assume $j_1>i_1$
and $j_2>i_2$.

Finally, since $[x_1,x_2][x_3,x_4][x_5,x_6]\in I$, we may
assume, modulo $I$, that $X=x_{l_1}\cdots x_{l_r}$, with
$l_1<\cdots<l_r$ and $U=x_{t_1}\cdots x_{t_s}$, with
$t_1<\cdots<t_s$. This gives the required form.
\end{proof}

We shall call the polynomials obtained in Lemma~\ref{lem} polynomials of type $(4')$. Since the subsequent reductions depend on the number of variables occurring between the two commutators, it is convenient to separate them according to the length of the middle block.

For each integer $s$ with $0\leq s\leq n-5$, let $\mathcal M_{n,s}^{(4)}$ denote the subspace of $P_n$ spanned by all polynomials of the form
$x_{l_1}\cdots x_{l_r}
[x_{j_1},x_{i_1}]
x_{t_1}\cdots x_{t_s}
[x_{j_2},x_{i_2}],$ where $r=n-s-4\geq 1$, $ 
l_1<\cdots<l_r$,
$t_1<\cdots<t_s$, $j_1>i_1$ and $j_2>i_2.$ Moreover, all the indices are distinct and satisfy $\{l_1,\ldots,l_r,j_1,i_1,t_1,\ldots,t_s,j_2,i_2\}
=
\{1,\ldots,n\}.$

We denote by $\mathcal M_n^{(4)}$ the subspace generated by all polynomials of type $(4')$. Hence
\begin{equation*}
\mathcal M_n^{(4)}
=
\mathcal M_{n,0}^{(4)}
+
\sum_{s=1}^{n-5}\mathcal M_{n,s}^{(4)}.
\end{equation*}

At this stage, we do not claim that the above sum is linearly independent modulo $I$.  We first study $\mathcal M_{n,0}^{(4)}$, where the two commutators are adjacent and then consider the spaces $\mathcal M_{n,s}^{(4)}$ with $s\geq1$.

\subsection{The case $\mathcal M_{n,0}^{(4)}$} In this case, the standard polynomial of degree five gives us
\begin{equation} \label{standard5}
St_5(x_1,x_2,x_3,x_4,x_5)
=\frac{1}{4}
\sum_{\sigma\in S_5}
\operatorname{sgn}(\sigma)
x_{\sigma(1)}
[x_{\sigma(2)},x_{\sigma(3)}]
[x_{\sigma(4)},x_{\sigma(5)}]    
\end{equation}

After orienting the two commutators, the previous equation expresses $St_5$ as a signed sum of polynomials
$x_l[x_j,x_i][x_q,x_p],$
obtained by distributing the five variables among the outside position and the two oriented commutators. In particular, if $a<b<c<d<e,$
then the polynomial $x_e[x_d,x_c][x_b,x_a]
$ can be expressed, modulo $I$, as a linear combination of the other polynomials of the same form. We shall use this consequence repeatedly. We now define two disjoint families in $\mathcal M_{n,0}^{(4)}$:
$$
\mathcal B'
=
\left\{
m\in\mathcal M_{n,0}^{(4)}
\mid
i_1<j_2,\quad j_1\neq n
\right\}\quad \mbox{ and }\quad \mathcal B''
=
\left\{
m\in\mathcal M_{n,0}^{(4)}
\mid
j_1=n
\right\}.
$$

\begin{lemma}\label{lem:no-gap-generation}
The family $\mathcal B'\cup\mathcal B''$ generates $\mathcal M_{n,0}^{(4)}$ modulo $I$. Moreover,
$\left|\mathcal B'\cup\mathcal B''\right|
=
6\binom{n}{4}
-
\binom{n-1}{4}.
$
\end{lemma}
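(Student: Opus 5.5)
The plan is to first count the monomials spanning $\mathcal M_{n,0}^{(4)}$ and then reduce the unique excluded pattern using $\operatorname{St}_5$.

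\textbf{Counting.} A spanning polynomial $x_{l_1}\cdots x_{l_r}[x_{j_1},x_{i_1}][x_{j_2},x_{i_2}]$ of $\mathcal M_{n,0}^{(4)}$ is determined by two data. The first is the $4$-subset $S=\{j_1,i_1,j_2,i_2\}$, since the outside block is then the increasing word on the complement. The second is an ordered splitting of $S$ into two oriented pairs, of which there are $4!/(2\cdot 2)=6$. Thus there are $6\binom{n}{4}$ spanning polynomials.

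Now identify those lying outside $\mathcal B'\cup\mathcal B''$. These satisfy $j_1\neq n$ and $i_1>j_2$. Together with $j_1>i_1$ and $j_2>i_2$, this gives $j_1>i_1>j_2>i_2$, so $j_1=\max S$. Hence $j_1\ne n$ means exactly $n\notin S$. Conversely, each $4$-subset $S\subseteq\{1,\ldots,n-1\}$ yields exactly one such polynomial, namely $[x_d,x_c][x_b,x_a]$ with $d>c>b>a$. Since $\mathcal B'$ and $\mathcal B''$ are disjoint, this gives $|\mathcal B'\cup\mathcal B''|=6\binom{n}{4}-\binom{n-1}{4}$.

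\textbf{Generation.} Two facts about $I$ will be used. First, the outside block may be freely reordered modulo $I$. Swapping two adjacent outside variables changes the polynomial by a term $u[x_p,x_q]v[\,\cdot\,,\cdot\,][\,\cdot\,,\cdot\,]$, which lies in $I$ by the three-commutator consequence recorded before Lemma~\ref{lem}. Second, $I$ is a $T$-ideal containing $\operatorname{St}_5$.

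Take an excluded polynomial $m=X'x_n[x_d,x_c][x_b,x_a]$, where $d>c>b>a$ and $n$ lies in the outside block (possibly after reordering). Left-multiply the identity (\ref{standard5}), evaluated at $x_n,x_d,x_c,x_b,x_a$, by $X'$. After orienting the commutators, each oriented term $x_l[x_j,x_i][x_q,x_p]$ arises from exactly four permutations carrying the same sign, so every coefficient is $\pm1$. In particular $m$ is expressed, modulo $I$, as a signed combination of the other terms $X'x_l[x_j,x_i][x_q,x_p]$, each followed by reordering of the outside block $X'x_l$.

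\textbf{Checking the other terms.} Every remaining term must lie in $\mathcal B'\cup\mathcal B''$.
\begin{itemize}
\item If $l=n$, the commutators split $\{a,b,c,d\}$ differently from $[x_d,x_c][x_b,x_a]$. As shown above, the only oriented split with $i_1>j_2$ is $[x_d,x_c][x_b,x_a]$ itself. So every other split has $i_1<j_2$, and since $j_1\neq n$, the term lies in $\mathcal B'$.
\item If $l\neq n$ and $x_n$ sits in the first commutator, orientation forces $j_1=n$, so the term lies in $\mathcal B''$.
\item If $l\neq n$ and $x_n$ sits in the second commutator, then $j_2=n>i_1$ and $j_1\ne n$, so the term lies in $\mathcal B'$.
\end{itemize}
Hence every excluded polynomial is a combination of elements of $\mathcal B'\cup\mathcal B''$ modulo $I$. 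The only delicate point is the bookkeeping of signs and coefficients in (\ref{standard5}), specifically that the target coefficient is nonzero; the four-fold orientation symmetry settles it.
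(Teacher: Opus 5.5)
Your proposal is correct and follows the paper's proof essentially step by step. You count the $6\binom{n}{4}$ spanning polynomials, identify the $\binom{n-1}{4}$ excluded ones as exactly $X'x_n[x_d,x_c][x_b,x_a]$ with $d>c>b>a$, and remove each one using the left multiple by $X'$ of $\operatorname{St}_5(x_n,x_d,x_c,x_b,x_a)$, with the same three-way case check on where $x_n$ sits; you also spell out the $\pm1$ coefficient (the four equal-sign orientations cancel the factor $\frac14$) and the reordering of the outside block via the three-commutator consequence, which the paper only asserts.
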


\begin{proof}
Let
$m=
x_{l_1}\cdots x_{l_{n-4}}
[x_{j_1},x_{i_1}]
[x_{j_2},x_{i_2}]$
be one of the polynomials spanning $\mathcal M_{n,0}^{(4)}$.
If $j_1=n$, then $m\in\mathcal B''$.
Assume therefore that $j_1\neq n$.
If $i_1<j_2$, then $m\in\mathcal B'$.
It remains to consider the case
$j_1\neq n$ and $i_1>j_2$.

Since $j_1>i_1>j_2>i_2$, the index $n$ cannot occur
in either commutator. Hence $l_{n-4}=n$.
Write $a=i_2$, $b=j_2$, $c=i_1$ and $d=j_1$.
Then $a<b<c<d<n$ and $m$ has the form
$Xx_n[x_d,x_c][x_b,x_a]$,
where $X$ is the possibly empty product of the remaining
outside variables.

By equation~(\ref{standard5}),
$x_n[x_d,x_c][x_b,x_a]$ is congruent modulo $I$ to a
linear combination of the other twenty-nine polynomials
obtained from the variables $x_a,x_b,x_c,x_d,x_n$,
since its coefficient in that expansion is $\pm1$.
Multiply this congruence on the left by $X$ and order
the outside variables modulo $I$.

Consider now the other twenty-nine terms in the expansion of
$\operatorname{St}_5(x_a,x_b,x_c,x_d,x_n)$.
Unlike the original term, these may have $x_n$ inside either
commutator. After multiplication by $X$ and ordering the
outside variables, a term with $x_n$ in the first commutator
belongs to $\mathcal B''$, whereas a term with $x_n$ in the
second belongs to $\mathcal B'$.

Among the terms with $x_n$ outside both commutators, the only
one satisfying $i_1>j_2$ is
$x_n[x_d,x_c][x_b,x_a]$, which has already been isolated.
Thus all the remaining terms, after multiplication by $X$,
belong to $\mathcal B'\cup\mathcal B''$.
Therefore $\mathcal B'\cup\mathcal B''$ generates
$\mathcal M_{n,0}^{(4)}$ modulo $I$.

It remains to count its elements. Choose the four variables
appearing in the two commutators.
There are $\binom{n}{4}$ possible choices.
For each fixed set of four variables, there are six ways
to choose the two variables of the first commutator;
the orientation of each commutator and the increasing
order of the outside variables are then uniquely determined.
Hence there are $6\binom{n}{4}$ polynomials of this form.

Such a polynomial does not belong to
$\mathcal B'\cup\mathcal B''$ precisely when
$j_1\neq n$ and $i_1>j_2$.
As observed above, this forces $n$ to be an outside variable.
For every choice
$
\{a<b<c<d\}\subseteq\{1,\ldots,n-1\},
$
there is exactly one such polynomial, namely
$x_{l_1}\cdots x_{l_{n-5}}x_n
[x_d,x_c][x_b,x_a]$.
Therefore the number of excluded polynomials is
$\binom{n-1}{4}$.
Consequently,
$$
\left|\mathcal B'\cup\mathcal B''\right|
=
6\binom{n}{4}
-
\binom{n-1}{4}.
$$
This completes the proof.
\end{proof}

We next turn to the spaces $\mathcal M_{n,s}^{(4)}$ with $s\geq1$, where variables occur between the two commutators. \black

\subsection{The case $\mathcal M_{n,s}^{(4)}$, $s\geq 1$.} The first step is to show that, modulo $I$ and the cases already considered, we may assume that the first entry of the first commutator is larger than every variable occurring in the middle block.

We shall use the polynomial
\begin{align*}
g={}&
-x_5x_6[x_4,x_2][x_3,x_1]
+x_4x_5[x_6,x_2][x_3,x_1]
-x_2x_5[x_6,x_4][x_3,x_1] \\
&+
x_5[x_4,x_2]x_6[x_3,x_1]
-x_5[x_6,x_2]x_4[x_3,x_1]
+x_5[x_6,x_4]x_2[x_3,x_1].
\end{align*}

We prove that $g$ is already a consequence of the central polynomial
$q=[x_1,x_2][x_3,x_4][x_5,x_6].$

\begin{lemma}\label{lem:g-consequence}
The polynomial $g$ belongs to $\langle q\rangle^T$. Consequently, modulo $I+\mathcal M_{n,0}^{(4)}$, we have
\begin{equation} 
\label{equacaoj1maior}
X[x_a,x_b]x_c[x_d,x_e]
\equiv
X[x_c,x_b]x_a[x_d,x_e] -X[x_c,x_a]x_b[x_d,x_e],
\end{equation}
for every nonempty monomial $X$.
\end{lemma}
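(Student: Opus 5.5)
The plan is to exhibit $g$ explicitly as a combination of two substitution instances of $q$ together with a Jacobi cancellation. Then we specialize $x_5$ to an arbitrary nonempty monomial $X$ to get the congruence~(\ref{equacaoj1maior}).

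\emph{Step 1: regroup the six terms of $g$ in pairs.} Set $C=[x_3,x_1]$ and pair each term containing an inner variable with the term where that variable stands outside:
\begin{align*}
x_5[x_4,x_2]x_6C-x_5x_6[x_4,x_2]C&=x_5[[x_4,x_2],x_6]C,\\
-x_5[x_6,x_2]x_4C+x_4x_5[x_6,x_2]C&=-x_5[[x_6,x_2],x_4]C+[x_4,x_5][x_6,x_2]C,\\
x_5[x_6,x_4]x_2C-x_2x_5[x_6,x_4]C&=x_5[[x_6,x_4],x_2]C-[x_2,x_5][x_6,x_4]C.
\end{align*}
In the last two lines we use $x_4x_5=x_5x_4+[x_4,x_5]$ and $x_2x_5=x_5x_2+[x_2,x_5]$. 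Adding these gives
\[
g=x_5\bigl([[x_4,x_2],x_6]+[[x_2,x_6],x_4]+[[x_6,x_4],x_2]\bigr)C+[x_4,x_5][x_6,x_2][x_3,x_1]-[x_2,x_5][x_6,x_4][x_3,x_1].
\]

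\emph{Step 2: conclude $g\in\langle q\rangle^T$.} The bracketed sum vanishes by the Jacobi identity. The remaining two summands are obtained from $q=[x_1,x_2][x_3,x_4][x_5,x_6]$ by renaming variables, so they lie in $\langle q\rangle^T$. Since $\langle q\rangle^T$ is a vector space, $g\in\langle q\rangle^T$. The only real care needed is in the signs of Step 1; I would double-check each pair by expanding both sides.

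\emph{Step 3: derive~(\ref{equacaoj1maior}).} Because $\langle q\rangle^T$ is closed under endomorphisms, we may substitute $x_5\mapsto X$, $x_4\mapsto x_a$, $x_2\mapsto x_b$, $x_6\mapsto x_c$, $x_3\mapsto x_d$, $x_1\mapsto x_e$. This yields an element of $\langle q\rangle^T\subseteq I$. Its three terms with adjacent commutators,
\[
-Xx_c[x_a,x_b][x_d,x_e],\qquad Xx_a[x_c,x_b][x_d,x_e],\qquad -Xx_b[x_c,x_a][x_d,x_e],
\]
each have the form $(\text{nonempty monomial})[\,\cdot\,,\cdot\,][\,\cdot\,,\cdot\,]$. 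After reordering the outside variables modulo $I$ and orienting the commutators, as in Lemma~\ref{lem}, they lie in $\mathcal M_{n,0}^{(4)}+I$.

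What remains is
\[
X[x_a,x_b]x_c[x_d,x_e]-X[x_c,x_b]x_a[x_d,x_e]+X[x_c,x_a]x_b[x_d,x_e]\equiv0
\]
modulo $I+\mathcal M_{n,0}^{(4)}$, which is exactly~(\ref{equacaoj1maior}). No ordering assumptions on $a,b,c,d,e$ are needed, since the identity holds for arbitrary distinct variables.
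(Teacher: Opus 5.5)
Your argument is correct and follows the paper's route. The paper likewise rewrites $g$ as $\bigl([x_4,x_5][x_6,x_2]-[x_2,x_5][x_6,x_4]\bigr)[x_3,x_1]$, and your pairing makes explicit the Jacobi cancellation hidden in its ``direct expansion''; it then makes the same substitution and discards the adjacent-commutator terms into $I+\mathcal M_{n,0}^{(4)}$. One harmless slip: after substitution the second and third adjacent terms are $x_aX[x_c,x_b][x_d,x_e]$ and $-x_bX[x_c,x_a][x_d,x_e]$, not $Xx_a\cdots$ and $-Xx_b\cdots$; these differ only by instances of $q$ such as $[x_a,X][x_c,x_b][x_d,x_e]$, which your reordering modulo $I$ already absorbs.
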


\begin{proof}
A direct expansion gives
$$
g=
\bigl([x_4,x_5][x_6,x_2]
-[x_2,x_5][x_6,x_4]\bigr)[x_3,x_1].
$$
Since the two summands are consequences of $q$ it follows that $g\in\langle q\rangle^T$.

Substitute $x_5\mapsto X$, $x_4\mapsto x_a$,
$x_2\mapsto x_b$, $x_6\mapsto x_c$,
$x_3\mapsto x_d$ and $x_1\mapsto x_e$
in the defining expression for $g$.
After orienting the commutators and ordering the initial
blocks, its first three summands belong to
$\mathcal M_{n,0}^{(4)}$ modulo $I$.
The remaining three summands give the required congruence.
\end{proof} 

In particular, for $s=1$, we may assume $j_1>t_1$ modulo
$I+\mathcal M_{n,0}^{(4)}$. Now, set
$$
S(a,b,c)
:=St_3(x_a,x_b,x_c)=
[x_a,x_b]x_c-[x_c,x_b]x_a+[x_c,x_a]x_b.$$

 We now extend (\ref{equacaoj1maior}) by allowing an arbitrary word between the first commutator and the distinguished variable $x_c$.

\begin{lemma} \label{lem:g-middle-word}
Let $X$ be a nonempty multilinear monomial and let
$u=x_{t_1}\cdots x_{t_m}$, where $m\geq0$.
Set $n=\deg X+m+5$ and $B=[x_d,x_e]$. Then
\[
X[x_a,x_b]ux_cB
\equiv X[x_c,x_b]ux_aB-X[x_c,x_a]ux_bB
\pmod{I+\sum_{r=0}^{m}\mathcal M_{n,r}^{(4)}}.
\]
\end{lemma}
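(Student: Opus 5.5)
Induction on $m$ is the natural route, with the case $m=0$ being exactly Lemma~\ref{lem:g-consequence}, since its error term lies in $I+\mathcal M_{n,0}^{(4)}$. For the inductive step, write $u=u'x_{t}$ with $\deg u'=m-1$, so that $X[x_a,x_b]ux_cB = X[x_a,x_b]u'x_tx_cB$. The plan is to move the letter $x_c$ leftwards past $u$, apply the base identity with $x_c$ adjacent to the commutator, and then move the letters back. Every commutator created along the way will land in $I$ or in some $\mathcal M_{n,r}^{(4)}$ with $r<m$.

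\textbf{Swapping adjacent letters.} Suppose two adjacent letters $x_px_q$ sit between the commutators, so the term has the form $X[x_a,x_b]v\,x_px_q\,wB$. Then
\[
X[x_a,x_b]vx_px_qwB = X[x_a,x_b]vx_qx_pwB + X[x_a,x_b]v[x_p,x_q]wB,
\]
and the second term contains three commutators, so it lies in $I$ by the consequence of $[x_1,x_2][x_3,x_4][x_5,x_6]\in I$ noted before Lemma~\ref{lem}. The middle letters may therefore be permuted freely modulo $I$. After permuting, we reorder to increasing indices; this is again free modulo $I$, since each reordering only creates a triple commutator. Hence
\[
X[x_a,x_b]ux_cB\equiv X[x_a,x_b]x_cuB \pmod I.
\]

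\textbf{Applying the base case.} Next we expand $[x_a,x_b]x_c$ using $S(a,b,c)$ rather than $g$. Lemma~\ref{lem:g-consequence} was obtained with $u$ empty and $B$ immediately after $x_c$, so we cannot apply it directly while the word $u$ sits between $x_c$ and $B$. One option is to move $u$ to the left of the first commutator, writing $[x_a,x_b]x_t = x_t[x_a,x_b]+[[x_a,x_b],x_t]$. The correction terms then have a first commutator of length three. By the manipulation in the proof of Lemma~\ref{lem} (expanding $[A,x_t]=Ax_t-x_tA$), these become terms with fewer letters in the middle block, that is, terms in $\mathcal M_{n,r}^{(4)}$ with $r<m$ modulo $I$. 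Once the commutators are reoriented and the initial block is reordered, the main term $Xu[x_a,x_b]x_cB$ lies in the form of Lemma~\ref{lem:g-consequence} with the nonempty monomial $Xu$. That lemma gives
\[
Xu[x_a,x_b]x_cB \equiv Xu[x_c,x_b]x_aB - Xu[x_c,x_a]x_bB
\]
modulo $I+\mathcal M_{n,0}^{(4)}$. Moving $u$ back through the same commutator identity reverses the step, with errors in the same space $\sum_{r<m}\mathcal M_{n,r}^{(4)}$. Finally, the swap argument returns $x_a$ and $x_b$ to the position after $u$.

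\textbf{Main obstacle.} The delicate point is the bookkeeping in the step $[[x_a,x_b],x_t]$. The expansion $[A,x_t]=Ax_t-x_tA$ produces a term $X x_t A u'' x_c B$, whose middle block has length $m$. This is not fewer than $m$ letters, but it is still of the allowed form, with $r=m$ middle letters, so it is in $\mathcal M_{n,m}^{(4)}$, which is already in the modulus $\sum_{r=0}^{m}\mathcal M_{n,r}^{(4)}$. The same holds for the term $X A x_t u'' x_cB$. I expect this to make the argument close, since the modulus in the statement includes $r=m$ itself. That inclusion makes the induction almost trivial: every correction term with a shorter or equal middle block is absorbed.
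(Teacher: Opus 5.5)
Your swap step is correct: permuting middle letters only costs triple-commutator terms, which lie in $I$. The gap is the step that carries $u$ across the first commutator, which is circular.

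Write $u=x_tu''$. The correction term is $X[[x_a,x_b],x_t]u''x_cB$, and expanding $[A,x_t]=Ax_t-x_tA$ gives $XAx_tu''x_cB-Xx_tAu''x_cB$. Only the second summand has $m$ middle letters. The first has middle block $x_tu''x_c$ of length $m+1$, and it is your left-hand side itself. So your count in the last paragraph is wrong, and the expansion just rewrites the difference you wanted to control.

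In fact the intermediate congruence $X[x_a,x_b]ux_cB\equiv Xu[x_a,x_b]x_cB$ modulo $I+\sum_{r=0}^{m}\mathcal M_{n,r}^{(4)}$ is false. Take $m=1$, $X=x_l$ and $n=7$. Evaluate $x_l\mapsto e_{22}$, $x_a\mapsto e_{23}$, $x_b,x_t,x_c,x_d\mapsto e_{33}$, $x_e\mapsto e_{34}$, and read off the coefficient of $e_{24}$.
\begin{itemize}
\item This linear functional vanishes on $I$, since the values of $I$ lie in $Z(D_0)=Fe_{14}$.
\item It vanishes on every polynomial of type $(4')$ with at most one middle letter. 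A nonzero value forces the first commutator to be $\pm e_{23}$ and the second to be $\pm e_{34}$. Since $e_{33}e_{23}=e_{33}e_{22}=0$, the nonempty initial block must then be $x_l$ alone, which leaves two $e_{33}$-variables for the middle block.
\item It equals $1$ on $x_l[x_a,x_b]x_tx_cB$ and $0$ on $x_lx_t[x_a,x_b]x_cB$.
\end{itemize}
Moving a letter from the middle block into the initial block changes the level $s$, and that is exactly what the later independence argument detects. Only the signed combination of the corrections from both of your transfers lies in the modulus, and proving that is the real content of the lemma.

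What is missing is a relation that stays at middle length $m+1$. The paper gets it from two expansions of the same polynomial $X\operatorname{St}_3(x_a,x_b,x_cu)B$.
\begin{itemize}
\item Expanding by the first letter gives $Xx_cu[x_a,x_b]B-Xx_a[x_cu,x_b]B+Xx_b[x_cu,x_a]B$. Use $[x_cu,x_z]=[x_c,x_z]u+x_c[u,x_z]$ and expand $[u,x_z]$ letter by letter. Every resulting term has at most $m$ middle letters, so this lies in $I+\sum_{r=0}^{m}\mathcal M_{n,r}^{(4)}$.
\item Expanding by the last letter, the same polynomial equals $X[x_a,x_b]x_cuB-X[x_c,x_b]ux_aB+X[x_c,x_a]ux_bB-Xx_c[u,x_b]x_aB+Xx_c[u,x_a]x_bB$. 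The last two terms again have at most $m$ middle letters.
\end{itemize}
Comparing the two expansions yields the congruence with $x_cu$ in place of $ux_c$. Your swap argument, $X[x_a,x_b](x_cu-ux_c)B\in I$, then finishes the proof. The paper uses Lemma~\ref{lem:g-consequence} only for $m=0$ and treats every $m\geq1$ directly this way, with no induction on $m$.
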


\begin{proof}
For $m=0$, the result is Lemma~\ref{lem:g-consequence}.
Assume $m\geq1$ and write
$S(a,b,cu)=\operatorname{St}_3(x_a,x_b,x_cu)$.
The alternative expansion of the standard polynomial gives
\[
XS(a,b,cu)B
=Xx_cu[x_a,x_b]B-Xx_a[x_cu,x_b]B
+Xx_b[x_cu,x_a]B.
\]
For $z\in\{a,b\}$, use
\[
[x_cu,x_z]=[x_c,x_z]u+x_c[u,x_z],
\qquad
[u,x_z]=\sum_{h=1}^{m}
x_{t_1}\cdots x_{t_{h-1}}
[x_{t_h},x_z]x_{t_{h+1}}\cdots x_{t_m}.
\]
Every resulting term has at most $m$ variables between its
two simple commutators. Hence, after orienting the commutators
and ordering the monomial blocks,
\[
XS(a,b,cu)B\in I+\sum_{r=0}^{m}\mathcal M_{n,r}^{(4)}.
\]
On the other hand,
\begin{align*}
XS(a,b,cu)B
={}&X[x_a,x_b]x_cuB-X[x_c,x_b]ux_aB
+X[x_c,x_a]ux_bB-Xx_c[u,x_b]x_aB+Xx_c[u,x_a]x_bB.
\end{align*}
In each of the last two terms, the same expansion of $[u,x_z]$
leaves $m-h+1\leq m$ variables between the commutators.
Thus these terms also belong to
$I+\sum_{r=0}^{m}\mathcal M_{n,r}^{(4)}$. Therefore, 
$$X[x_a,x_b]x_cuB\equiv X[x_c,x_b]ux_aB
-X[x_c,x_a]ux_bB \pmod{I+\sum_{r=0}^{m}\mathcal M_{n,r}^{(4)}}.$$

Since $X[x_a,x_b](x_cu-ux_c)B\in I$, the result follows. 
\end{proof} \black 

We can now obtain the first order condition on the indices.

\begin{lemma}\label{lem:first-index-larger-middle}
The space $\mathcal M_{n,s}^{(4)}$, $1\leq s\leq n-5$,
is generated, modulo
$I+\sum_{k=0}^{s-1}\mathcal M_{n,k}^{(4)}$, by the polynomials
\[
x_{l_1}\cdots x_{l_r}
[x_{j_1},x_{i_1}]
x_{t_1}\cdots x_{t_s}
[x_{j_2},x_{i_2}]
\in\mathcal M_{n,s}^{(4)}
\]
under the condition $j_1>t_s$.
\end{lemma}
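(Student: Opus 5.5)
We apply Lemma~\ref{lem:g-middle-word} with the middle block split just before its largest entry. Take a spanning element
\[
m=x_{l_1}\cdots x_{l_r}[x_{j_1},x_{i_1}]x_{t_1}\cdots x_{t_s}[x_{j_2},x_{i_2}]\in\mathcal M_{n,s}^{(4)}
\]
with $j_1<t_s$, where $t_s=\max\{t_1,\dots,t_s\}$. Set $X=x_{l_1}\cdots x_{l_r}$, $u=x_{t_1}\cdots x_{t_{s-1}}$ (so $m=s-1$ in the notation of that lemma), $c=t_s$, $a=j_1$, $b=i_1$, $B=[x_{j_2},x_{i_2}]$. Lemma~\ref{lem:g-middle-word} then gives, modulo $I+\sum_{k=0}^{s-1}\mathcal M_{n,k}^{(4)}$,
\[
m\equiv X[x_{t_s},x_{i_1}]\,u\,x_{j_1}B-X[x_{t_s},x_{j_1}]\,u\,x_{i_1}B.
\]

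In both resulting terms the first commutator has first entry $t_s$. The relevant inequalities are $t_s>j_1>i_1$ (using $j_1>i_1$) and $t_s>t_q$ for $q<s$. So $t_s$ exceeds every index in the new middle block, which is $\{t_1,\dots,t_{s-1},j_1\}$ or $\{t_1,\dots,t_{s-1},i_1\}$, and also exceeds the other entry of its commutator. The commutators are therefore already correctly oriented.

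The remaining step is to reorder the middle block increasingly. This is allowed modulo $I$: by the same mechanism used in Lemma~\ref{lem}, $X[\cdot,\cdot](vx-xv)B=X[\cdot,\cdot][v,x]B$ contains a product of three commutators after expansion, hence lies in $I$ by the consequence $u_0[a_1,a_2]u_1[a_3,a_4]u_2[a_5,a_6]u_3\in I$. Reordering does not change which index is largest, so each term becomes a generator of $\mathcal M_{n,s}^{(4)}$ whose first commutator entry exceeds the largest middle index; that is, it satisfies $j_1>t_s$ in the new labeling. The outside block $X$ is unchanged, so it stays nonempty and ordered.

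The main thing to check is that applying Lemma~\ref{lem:g-middle-word} with $m=s-1$ produces an error term only in $\sum_{r=0}^{s-1}\mathcal M_{n,r}^{(4)}$, which is exactly the allowed modulus. The degree bookkeeping $n=\deg X+(s-1)+5$ also has to match, and it does since $r+s+4=n$. The case $j_1=t_s$ cannot occur because indices are distinct, so every element either already satisfies the condition or is reduced in a single step. No induction is needed.
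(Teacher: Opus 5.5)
Your proposal is correct and is essentially the paper's own proof. You apply Lemma~\ref{lem:g-middle-word} with $a=j_1$, $b=i_1$, $c=t_s$ and $u=x_{t_1}\cdots x_{t_{s-1}}$, note that $t_s$ then heads the first commutator and exceeds every index in the new middle block, and reorder that block modulo $I$, all in a single step without induction.
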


\begin{proof}
Let
\[
m=X[x_{j_1},x_{i_1}]
x_{t_1}\cdots x_{t_s}[x_{j_2},x_{i_2}],
\qquad t_1<\cdots<t_s.
\]
There is nothing to prove if $j_1>t_s$.
Suppose that $j_1<t_s$ and set $a=j_1$, $b=i_1$, $c=t_s$,
$u=x_{t_1}\cdots x_{t_{s-1}}$ and $B=[x_{j_2},x_{i_2}]$.
Since $c>a>b$, Lemma~\ref{lem:g-middle-word} gives
\[
m\equiv X[x_c,x_b]ux_aB-X[x_c,x_a]ux_bB
\pmod{I+\sum_{k=0}^{s-1}\mathcal M_{n,k}^{(4)}}.
\]
In both terms the first index of the first commutator is
$c=t_s$, while all indices in the new middle block are smaller
than $c$. After ordering that block, both terms satisfy the
required condition.
\end{proof}

We now prove the analogous reduction for the second commutator. Set $p=x_1[x_2,x_3][x_4,x_5]x_6$.

\begin{lemma}\label{lem:st3-consequence}
We have
$x_6[x_5,x_1]\operatorname{St}_3(x_4,x_3,x_2)
\in\langle p\rangle^T$. More precisely,
\begin{align*}
x_6[x_5,x_1]\operatorname{St}_3(x_4,x_3,x_2)
={}&x_6[x_5,x_1][x_3,x_2]x_4
-x_6[x_5,x_1][x_4,x_2]x_3+x_6[x_5,x_1][x_4,x_3]x_2.
\end{align*}
\end{lemma}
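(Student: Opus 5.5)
The statement has two parts: an explicit identity, and membership in $\langle p\rangle^T$. I will first verify the displayed equality by a direct expansion, and then show that each of the three resulting summands lies in $\langle p\rangle^T$.

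\textbf{The displayed identity.} I expand $\operatorname{St}_3(x_4,x_3,x_2)$ as a signed sum of six monomials and regroup them according to the last variable:
\[
\operatorname{St}_3(x_4,x_3,x_2)=[x_3,x_2]x_4-[x_4,x_2]x_3+[x_4,x_3]x_2 .
\]
To confirm this, note that the terms ending in $x_4$ are $x_3x_2x_4-x_2x_3x_4$, since $\operatorname{sgn}$ of $(x_3,x_2,x_4)$ relative to $(x_4,x_3,x_2)$ is $+1$. The same check handles the terms ending in $x_3$ and in $x_2$. This is exactly the mirror image of the expansion $S(a,b,c)$ used above. Multiplying on the left by $x_6[x_5,x_1]$ then gives the displayed equality verbatim.

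\textbf{Membership in $\langle p\rangle^T$.} Each summand has the form $x_6[x_5,x_1][x_a,x_b]x_c$ with $\{a,b,c\}=\{2,3,4\}$. Each is therefore the image of $p=x_1[x_2,x_3][x_4,x_5]x_6$ under a substitution of variables. For the first summand I use $x_1\mapsto x_6$, $x_2\mapsto x_5$, $x_3\mapsto x_1$, $x_4\mapsto x_3$, $x_5\mapsto x_2$, $x_6\mapsto x_4$, and the other two are analogous. Since a $T$-space is closed under endomorphisms and linear combinations, the signed sum lies in $\langle p\rangle^T$.

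There is no genuine obstacle here. The only point requiring care is the sign bookkeeping in the expansion of $\operatorname{St}_3$, which I would check monomial by monomial.
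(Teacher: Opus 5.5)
Your proof is correct and essentially the paper's: both rewrite $\operatorname{St}_3(x_4,x_3,x_2)$ as $[x_3,x_2]x_4-[x_4,x_2]x_3+[x_4,x_3]x_2$ and observe that, after left multiplication by $x_6[x_5,x_1]$, each term is a substitution instance of $p$. The only difference is how the expansion is justified. The paper starts from $x_4[x_3,x_2]-x_3[x_4,x_2]+x_2[x_4,x_3]$ and uses $x[y,z]=[x,[y,z]]+[y,z]x$ together with the Jacobi identity, whereas you compare the six monomials directly. Incidentally, your expansion is exactly $S(4,3,2)$ after reorienting commutators, not its mirror image.
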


\begin{proof}
Recall that
$\operatorname{St}_3(x_4,x_3,x_2)
=x_4[x_3,x_2]-x_3[x_4,x_2]+x_2[x_4,x_3]$.
Set $A=x_6[x_5,x_1]$.
Using $Ax[y,z]=A[x,[y,z]]+A[y,z]x$, we obtain
\begin{align*}
A\operatorname{St}_3(x_4,x_3,x_2)
={}&A\bigl([x_4,[x_3,x_2]]-[x_3,[x_4,x_2]]
+[x_2,[x_4,x_3]]\bigr)+A[x_3,x_2]x_4-A[x_4,x_2]x_3+A[x_4,x_3]x_2.
\end{align*}
The expression in parentheses vanishes by the Jacobi identity.
Each remaining summand is a consequence of $p$,
which proves the result.

\end{proof}

For distinct indices $c,d,e$, set
\[
S'(c,d,e):=\operatorname{St}_3(x_c,x_d,x_e)
=x_c[x_d,x_e]-x_d[x_c,x_e]+x_e[x_c,x_d].
\]
The previous lemma gives $X[x_a,x_b]S'(c,d,e)\in I$
for every nonempty monomial $X$.
We now allow an arbitrary word between the first commutator
and the variables occurring in $S'(c,d,e)$.

\begin{lemma}\label{lem:st3-middle-word}

Let $X$ be a nonempty monomial and let
$u=x_{t_1}\cdots x_{t_m}$, where $m\geq0$.
Set $n=\deg X+m+5$.
Then, modulo $I+\sum_{r=0}^{m}\mathcal M_{n,r}^{(4)}$, we have
\begin{equation}\label{equacaoj2maior}
X[x_a,x_b]ux_c[x_d,x_e]
\equiv X[x_a,x_b]ux_d[x_c,x_e]
-X[x_a,x_b]ux_e[x_c,x_d].
\end{equation}
\end{lemma}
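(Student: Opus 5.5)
The plan is to show the stronger statement that
\[
X[x_a,x_b]u\operatorname{St}_3(x_c,x_d,x_e)\in I ,
\]
where we use the expansion $S'(c,d,e)=x_c[x_d,x_e]-x_d[x_c,x_e]+x_e[x_c,x_d]$. Expanding $u\,S'(c,d,e)$ gives exactly the difference of the two sides of~(\ref{equacaoj2maior}). So the congruence would hold already modulo $I$, and a fortiori modulo $I+\sum_{r=0}^{m}\mathcal M_{n,r}^{(4)}$. For $m=0$ this is precisely the remark after Lemma~\ref{lem:st3-consequence}: substitute $x_6\mapsto X$, $x_5\mapsto x_a$, $x_1\mapsto x_b$, $(x_4,x_3,x_2)\mapsto(x_c,x_d,x_e)$ there, and note that $I\supseteq\langle p\rangle^T$.

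For $m\geq1$ I would copy the proof of Lemma~\ref{lem:st3-consequence}, with the word $u$ kept to the left of the standard polynomial. Apply $x_y[x_z,x_w]=[x_y,[x_z,x_w]]+[x_z,x_w]x_y$ to each of the three summands of $S'(c,d,e)$. This gives
\[
uS'(c,d,e)=u\bigl([x_c,[x_d,x_e]]-[x_d,[x_c,x_e]]+[x_e,[x_c,x_d]]\bigr)+u[x_d,x_e]x_c-u[x_c,x_e]x_d+u[x_c,x_d]x_e .
\]
The bracketed expression vanishes by the Jacobi identity. It therefore suffices to show that every polynomial of the form $X[x_a,x_b]u[x_y,x_z]x_w$ lies in $I$.

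For this last step, write $[x_a,x_b]u=u[x_a,x_b]+[[x_a,x_b],u]$. Then
\[
X[x_a,x_b]u[x_y,x_z]x_w=(Xu)[x_a,x_b][x_y,x_z]x_w+X[[x_a,x_b],u][x_y,x_z]x_w .
\]
Both summands are evaluations of the generator $x_1[x_2,x_3][x_4,x_5]x_6$ of $I$ under endomorphisms of $F\langle X\rangle$. In the first we substitute $x_1\mapsto Xu$; in the second we substitute $x_1\mapsto X$, $x_2\mapsto[x_a,x_b]$ and $x_3\mapsto u$. Both substitutions are legitimate because $I$ is a $T$-ideal and $X$ is nonempty. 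Hence both summands lie in $I$, and the lemma follows.

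The only point needing care is the sign bookkeeping in the Jacobi step, which is identical to that in Lemma~\ref{lem:st3-consequence}. I also have to make sure that $X$ is nonempty so that the outer variable $x_1$ of the generator can be filled. I do not expect a genuine obstacle. In particular, unlike Lemma~\ref{lem:g-middle-word}, no terms with a shorter middle block appear, so the correction terms from $\sum_{r\le m}\mathcal M_{n,r}^{(4)}$ are not even needed.
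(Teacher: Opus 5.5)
Your proof is correct, and it proves more than the lemma asserts, by a different route from the paper.

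\textbf{The paper's route.} For $m\geq1$ the paper substitutes $x_5\mapsto x_au$ into Lemma~\ref{lem:st3-consequence}. This only yields $X[x_au,x_b]S'(c,d,e)\in I$. Splitting $[x_au,x_b]=x_a[u,x_b]+[x_a,x_b]u$ then leaves the error term $-Xx_a[u,x_b]S'(c,d,e)$. Its expansion has at most $m$ middle variables, and it is absorbed into $\sum_{r=0}^{m}\mathcal M_{n,r}^{(4)}$.

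\textbf{Your route.} You carry $u$ through the Jacobi rearrangement, so only terms $X[x_a,x_b]u[x_y,x_z]x_w$ remain. You then dispose of these with the splitting $XBuCx=XuBCx+X[B,u]Cx$, where $B=[x_a,x_b]$ and $C=[x_y,x_z]$. This is the same device the paper uses in the proof of Lemma~\ref{lem}. Your argument therefore shows that the congruence holds modulo $\langle p\rangle_T=\operatorname{Id}(D_0)$ alone. The correction space $\sum_{r=0}^{m}\mathcal M_{n,r}^{(4)}$ in the statement is superfluous, and in particular the paper's error term also lies in $I$.

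\textbf{What each buys.} The paper's weaker formulation costs nothing in its only application, Lemma~\ref{lem:second-index-larger-middle}, where one works modulo lower middle-length spaces anyway. It also lets Lemma~\ref{lem:st3-consequence} be reused as a black box. Your version is self-contained and gives the sharper statement: $X[x_a,x_b]u\operatorname{St}_3(x_c,x_d,x_e)$ is an ordinary identity of $D_0$, not merely a central polynomial.
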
 

\begin{proof}
For $m=0$, the result follows directly from
Lemma~\ref{lem:st3-consequence}.
Assume $m\geq1$.
Since $I$ is a $T$-ideal, substitute $x_6\mapsto X$,
$x_5\mapsto x_au$, $x_1\mapsto x_b$, $x_4\mapsto x_c$,
$x_3\mapsto x_d$ and $x_2\mapsto x_e$ in that lemma.
Thus $X[x_au,x_b]S'(c,d,e)\in I$.
Using $[x_au,x_b]=x_a[u,x_b]+[x_a,x_b]u$, we obtain
\[
X[x_a,x_b]uS'(c,d,e)
\equiv-Xx_a[u,x_b]S'(c,d,e)\pmod I.
\]
Now
\[
[u,x_b]=\sum_{h=1}^{m}
x_{t_1}\cdots x_{t_{h-1}}
[x_{t_h},x_b]x_{t_{h+1}}\cdots x_{t_m}.
\]
After expanding $S'(c,d,e)$, every summand on the right has
$m-h+1\leq m$ variables between its two simple commutators.
Orienting the commutators and ordering the monomial blocks
therefore gives
\[
X[x_a,x_b]uS'(c,d,e)
\equiv0\pmod{I+\sum_{r=0}^{m}\mathcal M_{n,r}^{(4)}}.
\]
Expanding $S'(c,d,e)$ gives \eqref{equacaoj2maior}.

\end{proof}

We can now impose the second order condition on the middle block. 

\begin{lemma}\label{lem:second-index-larger-middle}
The space $\mathcal M_{n,s}^{(4)}$, $1\leq s\leq n-5$,
is generated, modulo
$I+\sum_{k=0}^{s-1}\mathcal M_{n,k}^{(4)}$, by the polynomials
\[
x_{l_1}\cdots x_{l_r}
[x_{j_1},x_{i_1}]
x_{t_1}\cdots x_{t_s}
[x_{j_2},x_{i_2}]
\]
satisfying $j_1>t_s$ and $j_2>t_s$.
\end{lemma}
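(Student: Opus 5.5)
Starting from Lemma~\ref{lem:first-index-larger-middle}, we only need to handle a generator
\[
m=X[x_{j_1},x_{i_1}]x_{t_1}\cdots x_{t_s}[x_{j_2},x_{i_2}],\qquad j_1>t_s,
\]
for which $j_2<t_s$; the aim is to rewrite it, modulo $I+\sum_{k=0}^{s-1}\mathcal M_{n,k}^{(4)}$, so that both conditions hold. We apply Lemma~\ref{lem:st3-middle-word} with $a=j_1$, $b=i_1$, $u=x_{t_1}\cdots x_{t_{s-1}}$, $c=t_s$, $d=j_2$ and $e=i_2$, which is allowed because $m=s-1$ there. This gives
\[
m\equiv X[x_{j_1},x_{i_1}]ux_{j_2}[x_{t_s},x_{i_2}]-X[x_{j_1},x_{i_1}]ux_{i_2}[x_{t_s},x_{j_2}].
\]
Since $t_s>j_2>i_2$, both new second commutators are already oriented and have first entry $t_s$. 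The new middle blocks $\{t_1,\dots,t_{s-1},j_2\}$ and $\{t_1,\dots,t_{s-1},i_2\}$ consist of indices smaller than $t_s$. Reordering a middle block costs only elements of $I$, because $X[\cdot,\cdot]\,[x_p,x_q]\,\cdots[\cdot,\cdot]$ is killed by the three-commutator consequence, or more directly by $x_1[x_2,x_3][x_4,x_5]x_6$ together with the fact that a commutator inside the middle block produces three commutators. After reordering, the last middle index is strictly less than the new $j_2=t_s$, so the second condition holds.

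The first condition is also preserved. The first commutator is unchanged, and the new middle block is contained in $\{t_1,\dots,t_{s-1},j_2,i_2\}$. The indices $t_q$ are all less than $j_1$ by hypothesis, so it remains to compare $j_2$ and $i_2$ with $j_1$. If $j_2<j_1$, both new terms satisfy $j_1>$ every middle index. If instead $j_2>j_1$, the first condition may fail. In that case we apply Lemma~\ref{lem:first-index-larger-middle} again, that is, Lemma~\ref{lem:g-middle-word} with the large middle index $c=j_2$.

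That second application moves $j_2$ into the first commutator and pushes $j_1$ into the middle. It could in turn break the second condition, so termination has to be checked. For termination we use the multiset of the four commutator indices together with the maximum of $\{j_1,j_2,t_s\}$, or equivalently the position of the largest index among the "active" ones $j_1$, $t_s$, $j_2$. Each application of Lemma~\ref{lem:g-middle-word} or Lemma~\ref{lem:st3-middle-word} moves the largest of these into a first-entry slot. Once $M=\max(j_1,j_2,t_1,\dots,t_s)$ sits as the first entry of one of the commutators, the next step moves the maximum of the remaining indices to the other commutator. Since $M$ is never moved back into the middle block (both lemmas only move the chosen middle index $c$ into the commutator), the process ends after at most two or three steps. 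Concretely, the plan is to do a case analysis according to whether $M=j_1$, $M=j_2$, or $M=t_s$, and to show in each case that at most two applications of the two lemmas give terms satisfying $j_1>t_s$ and $j_2>t_s$. All error terms lie in $I+\sum_{k<s}\mathcal M_{n,k}^{(4)}$.

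The main obstacle is this termination and bookkeeping argument: making sure that enforcing the second condition does not destroy the first in a way that leads to cycling, and that each reordering of the middle block is genuinely free modulo $I$. I would verify the case $M=t_s$ first, where the two lemmas are applied in sequence, since it is the one that needs both reductions.
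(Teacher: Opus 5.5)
Your argument is the paper's: after Lemma~\ref{lem:first-index-larger-middle}, apply Lemma~\ref{lem:st3-middle-word} with $a=j_1$, $b=i_1$, $c=t_s$, $d=j_2$, $e=i_2$, $u=x_{t_1}\cdots x_{t_{s-1}}$, and then reorder the middle blocks, which costs only elements of $I$ by the three-commutator consequence. The one flaw is your case $j_2>j_1$: it cannot occur, because you are working under $j_1>t_s>j_2>i_2$, so $j_1$ automatically exceeds every index in both new middle blocks; the termination analysis in your last two paragraphs, which you only sketched, is therefore unnecessary and should be deleted.
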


\begin{proof}
Let $m=X[x_{j_1},x_{i_1}]
x_{t_1}\cdots x_{t_s}[x_{j_2},x_{i_2}]$,
where $t_1<\cdots<t_s$.
By Lemma~\ref{lem:first-index-larger-middle}, we may assume $j_1>t_s$.
There is nothing to prove if $j_2>t_s$.
Otherwise, $t_s>j_2>i_2$.
Set $u=x_{t_1}\cdots x_{t_{s-1}}$.
By Lemma~\ref{lem:st3-middle-word},
\[
m\equiv
X[x_{j_1},x_{i_1}]ux_{j_2}[x_{t_s},x_{i_2}]
-X[x_{j_1},x_{i_1}]ux_{i_2}[x_{t_s},x_{j_2}]
\pmod{I+\sum_{k=0}^{s-1}\mathcal M_{n,k}^{(4)}}.
\]
In both terms the first index of the second commutator is the
original $t_s$ and every index in the new middle block is
smaller than it. The condition on the first commutator is also
preserved, since $j_1$ is larger than the original $t_s$.
After ordering the middle blocks, both terms satisfy the
required conditions.
\end{proof} 

It remains to obtain the last order condition.
For distinct indices $a,b,c,d,e,l$, define
\begin{align*}
h_l(a,b,c,d,e)={}&
x_l[x_a,x_b]x_c[x_d,x_e]
-x_ax_l[x_b,x_c][x_d,x_e]
+x_dx_l[x_b,x_c][x_a,x_e]\\
&-x_ex_l[x_b,x_c][x_a,x_d]
+x_bx_l[x_e,x_c][x_a,x_d]
-x_bx_l[x_d,x_c][x_a,x_e]\\
&+x_bx_l[x_a,x_c][x_d,x_e]
-x_cx_l[x_e,x_b][x_a,x_d]
+x_cx_l[x_d,x_b][x_a,x_e]\\
&-x_cx_l[x_a,x_b][x_d,x_e]
-x_l[x_e,x_c]x_b[x_a,x_d]
+x_l[x_d,x_c]x_b[x_a,x_e]\\
&-x_l[x_a,x_c]x_b[x_d,x_e]
+x_l[x_e,x_b]x_c[x_a,x_d]
-x_l[x_d,x_b]x_c[x_a,x_e].
\end{align*}

\begin{lemma}\label{lem:garnir}
The polynomial $h_l(a,b,c,d,e)$ belongs to $\langle p,q\rangle_T\subseteq I.$
\end{lemma}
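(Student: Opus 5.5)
The plan is to show that $h_l(a,b,c,d,e)$ reduces to $0$ modulo $J=\langle p,q\rangle_T$ by rewriting every one of its fifteen summands as $x_l$ times a product of a commutator of length two and a commutator of length three. After that, the question becomes an exact cancellation in the free algebra.

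Two elementary reduction rules are used throughout.

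First, for any variables $y,z$ and commutators $A,B$,
\[
x_yx_zAB-x_zx_yAB=[x_y,x_z]AB\in J,
\]
because this is a consequence of $q$. So initial monomials may be reordered freely modulo $J$. In particular, each term $x_yx_l[\cdot,\cdot][\cdot,\cdot]$ of $h_l$ may be replaced by $x_lx_y[\cdot,\cdot][\cdot,\cdot]$.

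Second, for a commutator $A=[x_u,x_v]$ and a variable $x_y$ we have $x_yA=Ax_y+[x_y,A]$. For a commutator $B=[x_w,x_z]$ we have $Ax_yB=A[x_y,B]+ABx_y$. The term $x_lABx_y$ is a consequence of $p$. Hence, modulo $J$,
\[
x_lx_yAB\equiv x_l[x_y,A]B+x_lA[x_y,B],
\qquad
x_lAx_yB\equiv x_lA[x_y,B].
\]

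Applying these rules, every summand of $h_l$ becomes a signed sum of polynomials
\[
x_l\,[x_y,[x_u,x_v]]\,[x_w,x_z]
\quad\text{or}\quad
x_l\,[x_u,x_v]\,[x_y,[x_w,x_z]],
\]
where $\{y,u,v,w,z\}=\{a,b,c,d,e\}$. Thus $h_l\equiv x_l\,L\pmod J$, where $L$ is an explicit element of $\mathcal L_3\cdot\mathcal L_2+\mathcal L_2\cdot\mathcal L_3$. Here $\mathcal L_k$ denotes the multilinear Lie elements of degree $k$ in $x_a,\dots,x_e$.

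The next step is to expand $L$ using antisymmetry and the Jacobi identity. Each length-three commutator is written in a fixed basis of $\mathcal L_3$ on the relevant three letters, namely $[x_y,[x_u,x_v]]$ with $y$ the largest letter. I would then collect the coefficients of the resulting products. Since products of a basis element of $\mathcal L_2$ with a basis element of $\mathcal L_3$ (in either order) are linearly independent in the free associative algebra, a zero coefficient vector gives $L=0$. This yields $h_l\in J\subseteq I$.

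If some coefficients do not cancel, I would look for an additional relation among the remaining terms. I expect it to come from $x_lA[x_y,B]$ versus $x_l[x_y,A]B$. Indeed $x_l[x_y,AB]$ is a consequence of $p$, since $x_lABx_y$ and $x_lx_yAB$ both lie in $J$ by the rules above. This gives
\[
x_l[x_y,A]B\equiv -x_lA[x_y,B]\pmod J.
\]
Hence all terms can be normalised to the single shape $x_l[x_u,x_v][x_y,[x_w,x_z]]$, and the cancellation check is performed in the span of those shapes.

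I expect the main obstacle to be this bookkeeping: tracking the signs of the fifteen terms through the two rewriting rules and the Jacobi normalisation. The structure of $h_l$, with fixed $x_l$ and an alternation over how $b,c$ are distributed, suggests it is a Garnir-type relation. So the cancellation should organise itself by grouping the terms according to which pair of letters forms the length-two commutator.
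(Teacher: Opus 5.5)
Your main route is correct and is essentially the paper's proof. Number the summands of $h_l$ as $T_1,\ldots,T_{15}$ in the displayed order and carry out your two rules exactly rather than modulo $J$. Rule~1 then produces precisely the paper's nine $q$-consequences $[x_y,x_l]AB$. The terms $x_lABx_y$ produced by rule~2 cancel in the pairs $(T_1,T_{10})$, $(T_5,T_{11})$, $(T_6,T_{12})$, $(T_7,T_{13})$, $(T_8,T_{14})$, $(T_9,T_{15})$, and what survives are exactly the paper's three $p$-consequences, coming from $T_2,T_3,T_4$.

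The bookkeeping you leave open does close with $L=0$ identically. In the $\mathcal L_2\mathcal L_3$ part the same pairs cancel, and what remains is $[x_b,x_c]\bigl(-[x_a,[x_d,x_e]]+[x_d,[x_a,x_e]]-[x_e,[x_a,x_d]]\bigr)=0$. The $\mathcal L_3\mathcal L_2$ part comes from $T_2,\ldots,T_{10}$. Grouped by the right factor $[x_d,x_e]$, $[x_a,x_e]$, $[x_a,x_d]$, it is three instances of the Jacobi identity.

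You should, however, discard your fallback, because the relation $x_l[x_y,A]B\equiv-x_lA[x_y,B]\pmod J$ is false. It asserts that $x_l[x_y,AB]\in J$. Since $x_lABx_y\in J$, this is equivalent to $x_lx_yAB\in J$. Your rules only show that $x_lx_yAB$ is congruent to $x_l[x_y,A]B+x_lA[x_y,B]$, not that it lies in $J$. In fact it does not lie in $J$. Indeed, $J\subseteq\operatorname{Id}^z(D_0)$, while substituting $e_{22}$ for $x_l$ and $x_y$, $A=[e_{23},e_{33}]$ and $B=[e_{33},e_{34}]$ gives $e_{24}\notin Fe_{14}=Z(D_0)$. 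So that relation must never be used to absorb uncancelled coefficients. Here it is not needed only because the exact cancellation above occurs.
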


\begin{proof}
A direct expansion gives
\begin{align*}
h_l(a,b,c,d,e)={}&
-[x_c,x_l][x_b,x_e][x_d,x_a]
-[x_c,x_l][x_b,x_d][x_a,x_e]-[x_c,x_l][x_b,x_a][x_e,x_d]\\
&
+[x_b,x_l][x_c,x_e][x_d,x_a]+[x_b,x_l][x_c,x_d][x_a,x_e]
-[x_b,x_l][x_c,x_a][x_d,x_e]\\
&+[x_e,x_l][x_c,x_b][x_a,x_d]
+[x_d,x_l][x_c,x_b][x_e,x_a]+[x_a,x_l][x_c,x_b][x_d,x_e]\\
&
-x_l[x_c,x_b][x_e,x_d]x_a+x_l[x_c,x_b][x_e,x_a]x_d
-x_l[x_c,x_b][x_d,x_a]x_e.
\end{align*}
The first nine summands are consequence of $q=[x_1,x_2][x_3,x_4][x_5,x_6],$ whereas the last three summands are consequence of $p=x_1[x_2,x_3][x_4,x_5]x_6.$ Hence $h_l(a,b,c,d,e)\in\langle p,q\rangle_T.$
\end{proof}

We now use this relation for an arbitrary nonempty middle block.

\begin{lemma}\label{lem:garnir-middle-word}
Let $1\leq s\leq n-5$,
$u=x_{t_2}\cdots x_{t_s}$, with $t_1<\cdots<t_s$,
and let $X$ be a nonempty monomial.
Assume that $a>b$, $d>e$, $a>t_s$, $d>t_s$,
$c=t_1<b$ and $d<a$.
Then $X[x_a,x_b]x_cu[x_d,x_e]$ is congruent modulo $I$
to a linear combination of polynomials of the following types:
\begin{enumerate}
\item polynomials belonging to
$\sum_{r=0}^{s-1}\mathcal M_{n,r}^{(4)}$;
\item polynomials in $\mathcal M_{n,s}^{(4)}$
satisfying $i_1<t_1$ or $j_1<j_2$;
\item polynomials in $\mathcal M_{n,s}^{(4)}$
having a first middle index strictly smaller than $c$.
\end{enumerate}
In \textup{(2)} and \textup{(3)}, the polynomials also satisfy
$j_1>t_s$ and $j_2>t_s$, with respect to their new indices.
\end{lemma}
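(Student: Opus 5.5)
The plan is to substitute into $h_l(a,b,c,d,e)$, which lies in $I$ by Lemma~\ref{lem:garnir}, and to insert the word $u$ by the same technique used in Lemmas~\ref{lem:g-middle-word} and~\ref{lem:st3-middle-word}. Since $I$ is a $T$-ideal, we replace $x_l$ by the nonempty monomial $X$. We then replace $x_c$ by the word $x_cu$, keeping $x_a,x_b,x_d,x_e$ as variables. This gives $h_X(a,b,cu,d,e)\in I$. In this polynomial the term $X[x_a,x_b]x_cu[x_d,x_e]$ appears with coefficient $1$, so it is congruent modulo $I$ to minus the sum of the remaining fourteen summands. It remains to classify these summands into types (1)--(3).

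The summands come in three kinds.

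\textbf{Summands with $x_cu$ as an outside letter.} These are the terms $x_cx_l[\cdot,\cdot][\cdot,\cdot]$ with adjacent commutators. They lie in $\mathcal M_{n,0}^{(4)}$ after reordering the outside block modulo $I$, using the consequence of $[x_1,x_2][x_3,x_4][x_5,x_6]\in I$. So they are of type (1).

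\textbf{Summands with $x_cu$ inside a commutator.} These contain $[x_cu,x_z]$, which we expand as $[x_c,x_z]u+x_c[u,x_z]$, and expand $[u,x_z]$ termwise as in Lemma~\ref{lem:g-middle-word}. Each term then has its two simple commutators separated by strictly fewer than $s$ letters, and $[u,x_z]$ is itself a commutator. Sub-terms with three commutators lie in $I$. The others, after orientation and ordering, lie in $\sum_{r<s}\mathcal M_{n,r}^{(4)}$, so they are again of type (1).

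\textbf{Summands with $x_cu$ as the whole middle block.} These are $-X[x_a,x_c]x_b\,\cdot$, $+X[x_e,x_b]x_cu\,\cdot$, and similar terms. The middle block is now either $x_cu$ or a single letter $x_b$, $x_d$ or $x_e$. The word $x_bu$ arises only after we use $X[\cdot,\cdot](x_bu-ux_b)B\in I$. We reorder the middle block into increasing order, which is legitimate modulo $I$ by the same identity.

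For this third kind, we check the three cases.
\begin{itemize}
\item If the middle block still contains $x_c$ and the first commutator is $[x_e,x_b]$ or $[x_d,x_b]$, then $j_1\in\{d,e\}$, and both are smaller than $a$, which is now in the second commutator. Hence $j_1<j_2$, which is type (2).
\item If the middle block contains $x_b$ in place of $x_c$, then $c=t_1<b$ is now a commutator index. In $[x_a,x_c]$ we get $i_1=c<$ every middle index, giving type (2). In $[x_d,x_c]$ or $[x_e,x_c]$ we get $j_1<a=j_2$ or again $i_1<t_1$, also type (2).
\item If the middle block contains $x_d$ or $x_e$ in place of $x_c$, then since $e<d<a$ and $e,d$ may be smaller than $c$, the new first middle index is either $<c$ (type (3)) or the commutator has $i_1=c<t_1$ (type (2)).
\end{itemize}

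For the conditions $j_1>t_s$ and $j_2>t_s$ with respect to the new indices, we invoke Lemmas~\ref{lem:first-index-larger-middle} and~\ref{lem:second-index-larger-middle}. These may reintroduce lower-$s$ terms, which are type (1), but we must confirm that the reductions preserve membership in type (2) or (3). This holds because those reductions only swap in the largest middle index, and do not increase the smallest middle index or destroy $j_1<j_2$ or $i_1<t_1$.

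I expect the main obstacle to be the bookkeeping in this third kind of summand. We must confirm that every one of the fourteen terms lands in one of (1)--(3) using the hypotheses $c<b$, $d<a$, $a,d>t_s$, and that the final normalizations of Lemmas~\ref{lem:first-index-larger-middle} and~\ref{lem:second-index-larger-middle} preserve the type (2)/(3) property. I would handle this by tabulating the terms of $h_l$ one by one.
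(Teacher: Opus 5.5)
Your proposal is the paper's argument. You substitute $x_c\mapsto x_cu$ into $h_l(a,b,c,d,e)$, expand $[x_z,x_cu]=[x_z,x_c]u+x_c[x_z,u]$, discard everything of middle length less than $s$, and normalize the five surviving terms: $a$, the largest index present, leads the second commutator in four of them (so $j_1<j_2$ survives Lemma~\ref{lem:first-index-larger-middle}), while $X[x_a,x_c]ux_b[x_d,x_e]$ needs Lemma~\ref{lem:st3-middle-word} exactly when $b>d$ and then yields type (2), or type (3) when $e<c$.

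The bookkeeping needs tidying, though. The summands $X[x_z,x_cu]x_b[\cdot,\cdot]$ do not drop below middle length $s$; their parts $X[x_z,x_c]ux_b[\cdot,\cdot]$ are exactly your second bullet. No summand of $h_l$ has $x_d$ or $x_e$ as a middle letter: these arise only from that use of Lemma~\ref{lem:st3-middle-word}, and $d>t_s\geq c$ always. Finally, your blanket claim that the normalizing lemmas preserve $i_1<t_1$ is false in general, since the second term of Lemma~\ref{lem:g-middle-word} moves the old $i_1$ into the middle while its new lower entry is the old $j_1>i_1$. So preservation must be checked on these specific terms, as above.
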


\begin{proof}
Write $X=Yx_l$, where $Y$ may be empty.
By Lemma~\ref{lem:garnir}, substitute $x_c\mapsto x_cu$
in $h_l(a,b,c,d,e)$ and multiply on the left by $Y$.
Thus $Yh_l(a,b,cu,d,e)\in I$ and its first summand is
$X[x_a,x_b]x_cu[x_d,x_e]$.

For $s\geq2$, we analyze the remaining terms using
\[
[x_z,x_cu]=[x_z,x_c]u+x_c[x_z,u],
\qquad
[x_z,u]=\sum_{h=2}^{s}
x_{t_2}\cdots x_{t_{h-1}}
[x_z,x_{t_h}]x_{t_{h+1}}\cdots x_{t_s}.
\]
If $x_cu$ precedes both commutators, the term belongs to
$\mathcal M_{n,0}^{(4)}$ modulo $I$.
If $[x_z,x_cu]$ is immediately followed by the second
commutator, its expansion leaves at most $s-1$ middle
variables.
For a term $X[x_z,x_cu]x_bC$, where $C$ is the second
commutator, the part $X[x_z,x_c]ux_bC$ retains $s$ middle
variables, whereas every term from $Xx_c[x_z,u]x_bC$
has $s-h+1\leq s-1$ middle variables.
For $s=1$, the same grouping follows directly by omitting $u$.
Consequently, after ordering the monomial blocks,
\begin{align*}
X[x_a,x_b]x_cu[x_d,x_e]\equiv{}&
 X[x_e,x_c]ux_b[x_a,x_d]
-X[x_d,x_c]ux_b[x_a,x_e]+X[x_a,x_c]ux_b[x_d,x_e]\\
&
-X[x_e,x_b]x_cu[x_a,x_d]+X[x_d,x_b]x_cu[x_a,x_e]
\pmod{I+\sum_{r=0}^{s-1}\mathcal M_{n,r}^{(4)}}.
\end{align*}

In the first, second, fourth and fifth terms, the first
index of the second commutator is $a$, which is larger
than every other index in the two commutators and the
middle block. After orienting the first commutator, applying Lemma~\ref{lem:first-index-larger-middle}
when necessary preserves this property. Thus all resulting
terms of middle length $s$ satisfy both upper-index
conditions and $j_1<j_2$.

It remains to consider
$X[x_a,x_c]ux_b[x_d,x_e]$.
All indices in $ux_b$ are larger than $c$, so this term
satisfies $i_1<t_1$.
If $d>b$, it already satisfies both upper-index conditions.
If $b>d$, then $b$ is larger than every index in $u$.
Lemma~\ref{lem:st3-middle-word} gives
\begin{align*}
X[x_a,x_c]ux_b[x_d,x_e]\equiv 
X[x_a,x_c]ux_d[x_b,x_e]
-X[x_a,x_c]ux_e[x_b,x_d]
\pmod{I+\sum_{r=0}^{s-1}\mathcal M_{n,r}^{(4)}}.
\end{align*}
Both terms satisfy the upper-index conditions.
The first satisfies $i_1<t_1$, since every index in $ux_d$
is larger than $c$.
The second also satisfies this inequality if $e>c$;
if $e<c$, its first middle index is $e<c$.
This proves all the assertions, including the case $s=1$,
when $u$ is empty.

\end{proof} 

We can now complete the normalization of the indices.

\begin{lemma}\label{lem:last-order-condition}
The space $\mathcal M_{n,s}^{(4)}$, $1\leq s\leq n-5$,
is generated, modulo
$I+\sum_{k=0}^{s-1}\mathcal M_{n,k}^{(4)}$, by polynomials
\[
x_{l_1}\cdots x_{l_r}
[x_{j_1},x_{i_1}]
x_{t_1}\cdots x_{t_s}
[x_{j_2},x_{i_2}]
\]
satisfying $j_1>t_s$, $j_2>t_s$ and either
$i_1<t_1$ or $j_1<j_2$.
\end{lemma}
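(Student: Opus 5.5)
We start from Lemma~\ref{lem:second-index-larger-middle}. It shows that, modulo $I+\sum_{k=0}^{s-1}\mathcal M_{n,k}^{(4)}$, the space $\mathcal M_{n,s}^{(4)}$ is spanned by polynomials $m=X[x_{j_1},x_{i_1}]x_{t_1}\cdots x_{t_s}[x_{j_2},x_{i_2}]$ with $j_1>t_s$ and $j_2>t_s$. It remains to rewrite the ``bad'' ones, namely those with $i_1>t_1$ and $j_1>j_2$. Since all indices are distinct, these are exactly the polynomials with $t_1<i_1$ and $j_2<j_1$. We set $a=j_1$, $b=i_1$, $c=t_1$, $d=j_2$, $e=i_2$ and $u=x_{t_2}\cdots x_{t_s}$. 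Then $a>b$, $d>e$, $a>t_s$, $d>t_s$, $c<b$ and $d<a$, which are precisely the hypotheses of Lemma~\ref{lem:garnir-middle-word}.

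Lemma~\ref{lem:garnir-middle-word} rewrites such a bad polynomial as a combination of three kinds of terms. Terms of kind (1) lie in lower middle length and are discarded. Terms of kind (2) already satisfy all three required conditions. Terms of kind (3) again satisfy $j_1>t_s$ and $j_2>t_s$, but their first middle index is strictly smaller than $c=t_1$. We therefore argue by induction on the first middle index $t_1$, descending in the sense that a smaller $t_1$ counts as progress. Take a minimal counterexample: a bad spanning polynomial whose first middle index $t_1$ is smallest among those not yet expressible in the desired form. Apply Lemma~\ref{lem:garnir-middle-word} to it. Every term of kind (3) has a strictly smaller first middle index, so it is either good or, by minimality, already expressible modulo $I+\sum_{k<s}\mathcal M_{n,k}^{(4)}$. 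The induction is well-founded because $t_1\geq1$. Equivalently, we may run a strong induction on $t_1$ with base case $t_1=1$. In that case kind (3) cannot occur, since no index is smaller than $1$, so the lemma yields only good terms and lower-length terms.

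One point must be checked: the new terms must lie in the spanning set we are inducting over. They must belong to $\mathcal M_{n,s}^{(4)}$ with the outside block and middle block increasingly ordered, both commutators oriented, and the two upper-index conditions intact. Reordering the outside block and the middle block is free modulo $I$, by the triple-commutator consequence noted before Lemma~\ref{lem}. Orientation changes only signs. The final clause of Lemma~\ref{lem:garnir-middle-word} guarantees $j_1>t_s$ and $j_2>t_s$ for the new terms. So each produced term is either good, or bad with a smaller first middle index, or lies in lower middle length, and the induction closes.

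The main obstacle is making sure the recursion really decreases and never cycles back to the original polynomial. Kind (3) terms decrease $t_1$ strictly. However, if any reduction invoked Lemmas~\ref{lem:first-index-larger-middle} or~\ref{lem:second-index-larger-middle} again, it could alter the middle block. I would check, as the proof of Lemma~\ref{lem:garnir-middle-word} indicates, that such reapplications only replace a middle index by a larger one while keeping the minimum, or else land in lower length. Hence $t_1$ never increases, and the measure ``first middle index'' is monotone throughout.
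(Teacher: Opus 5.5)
Your proposal is correct and matches the paper's proof. The paper also starts from Lemmas~\ref{lem:first-index-larger-middle} and~\ref{lem:second-index-larger-middle}, feeds the bad terms ($t_1<i_1$, $j_2<j_1$) into Lemma~\ref{lem:garnir-middle-word} with $a=j_1$, $b=i_1$, $c=t_1$, $d=j_2$, $e=i_2$, and closes by induction on the first middle index $t_1$, with base case $t_1=1$.

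Your final worry is unnecessary, because the statement of Lemma~\ref{lem:garnir-middle-word} already guarantees the upper-index conditions and the strict decrease of $t_1$ for kind (3). One detail in that paragraph is also inaccurate: reapplying Lemma~\ref{lem:first-index-larger-middle} or~\ref{lem:st3-middle-word} replaces the largest middle index by a \emph{smaller} one, not a larger one. So the minimum can drop rather than stay fixed, which is harmless for a strong induction.
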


\begin{proof}
Fix $s\geq1$. By Lemmas~\ref{lem:first-index-larger-middle}
and~\ref{lem:second-index-larger-middle}, we may assume $j_1>t_s$
and $j_2>t_s$.
We argue by induction on the first middle index $t_1$.
Let
\[
m=X[x_{j_1},x_{i_1}]
x_{t_1}\cdots x_{t_s}[x_{j_2},x_{i_2}],
\qquad t_1<\cdots<t_s.
\]
There is nothing to prove if $i_1<t_1$ or $j_1<j_2$.
Otherwise, $t_1<i_1$ and $j_2<j_1$.
Apply Lemma~\ref{lem:garnir-middle-word} with
$a=j_1$, $b=i_1$, $c=t_1$, $d=j_2$, $e=i_2$
and $u=x_{t_2}\cdots x_{t_s}$.
Modulo $I+\sum_{k=0}^{s-1}\mathcal M_{n,k}^{(4)}$,
the resulting terms either satisfy the required alternative
or have a strictly smaller first middle index.
In both cases the upper-index conditions are preserved.

If $t_1=1$, the second possibility cannot occur.
If $t_1>1$, it is covered by the induction hypothesis.
Therefore every term reduces to a linear combination of
polynomials with the required conditions.

\end{proof}


\subsection{Linear independence} We now collect the normal forms obtained in the previous reductions. For $n\geq5$, set
$\mathcal B_{n,0}^{(4)}=\mathcal B'\cup\mathcal B''$,
where $\mathcal B'$ and $\mathcal B''$ are the families
defined in Subsection~3.1.

For $1\leq s\leq n-5$, let $\mathcal B_{n,s}^{(4)}$ be the set of all polynomials
$x_{l_1}\cdots x_{l_r}
[x_{j_1},x_{i_1}]
x_{t_1}\cdots x_{t_s}
[x_{j_2},x_{i_2}],$ where $r=n-s-4\geq1,$ $
l_1<\cdots<l_r,$ $
t_1<\cdots<t_s,$ $
j_1>i_1,$ $j_2>i_2,$ $j_1>t_s,$ $
j_2>t_s$ and either $i_1<t_1$ or $j_1<j_2.$ All indices are distinct and together form $\{1,\ldots,n\}$. Finally, set
$$
\mathcal B_n^{(4)}
=
\mathcal B_{n,0}^{(4)}
\cup
\bigcup_{s=1}^{n-5}\mathcal B_{n,s}^{(4)}.
$$
By the previous lemmas, $\mathcal B_n^{(4)}$ generates the polynomials of type $(4')$ modulo $I$.

The evaluations used in the proof of Lemma~\ref{lemtype123} also annihilate every polynomial of type $(4')$. Hence it remains to prove
that $\mathcal B_n^{(4)}$ is linearly independent modulo
$P_n\cap\operatorname{Id}^z(D_0)$.

\begin{lemma}
\label{lem:linear-independence-B4}
The polynomials in $\mathcal B_n^{(4)}$ are linearly independent
modulo $P_n\cap\operatorname{Id}^{z}(D_0)$.
\end{lemma}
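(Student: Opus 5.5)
The plan is to detect membership in $\operatorname{Id}^z(D_0)$ through a non-central matrix unit. Every type $(4')$ polynomial $X[x_{j_1},x_{i_1}]U[x_{j_2},x_{i_2}]$ takes values in $Fe_{14}+Fe_{24}$. Indeed, $[D_0,D_0]$ is strictly upper triangular, so $[\,\cdot,\cdot\,]U[\,\cdot,\cdot\,]$ lies in $Fe_{13}+Fe_{14}+Fe_{24}$. Left multiplication by $X\in D_0$ kills $e_{13}$ and $e_{14}$ (the first column of $D_0$ is zero) and sends $e_{24}$ into $Fe_{14}+Fe_{24}$. Since $Z(D_0)=Fe_{14}$, a linear combination $f=\sum_{m\in\mathcal B_n^{(4)}}\lambda_m m$ is central if and only if the $e_{24}$-coordinate of every evaluation of $f$ vanishes.

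I will only evaluate on the subalgebra $R=Fe_{22}+Fe_{33}+Fe_{23}+Fe_{24}+Fe_{34}$. I would take generic elements
\[
x_k\mapsto \alpha_k e_{22}+\beta_k e_{33}+\gamma_k e_{23}+\varepsilon_k e_{34}+\zeta_k e_{24},
\]
with commuting indeterminates, and record the $e_{24}$-coordinate $\Phi(m)\in F[\alpha,\beta,\gamma,\varepsilon]$ of each $m\in\mathcal B_n^{(4)}$. A path computation gives it explicitly. The outside block contributes $\prod_{l}\alpha_l$. The first commutator $[x_{j_1},x_{i_1}]$ contributes its $e_{23}$-coordinate $\gamma_{j_1}(\beta_{i_1}-\alpha_{i_1})-\gamma_{i_1}(\beta_{j_1}-\alpha_{j_1})$. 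The middle block contributes $\prod_t\beta_t$. The second commutator contributes its $e_{34}$-coordinate $\varepsilon_{j_2}\beta_{i_2}-\varepsilon_{i_2}\beta_{j_2}$, because $e_{44}$ is absent. I also have to account for terms of the form $e_{22}\cdot e_{24}$ through the $\zeta$'s and for $e_{23}\cdot e_{34}$ arising across the blocks. I would kill these by setting $\zeta=0$ and checking which of the two commutators can carry $e_{34}$, which is forced since the outside block must end at row 2.

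Lemma~\ref{lem:linear-independence-B4} then reduces to showing that the polynomials $\Phi(m)$, $m\in\mathcal B_n^{(4)}$, are linearly independent in $F[\alpha,\beta,\gamma,\varepsilon]$. I will exhibit a monomial order in which distinct elements of $\mathcal B_n^{(4)}$ have distinct leading monomials, or at least monomials that occur in $\Phi(m)$ and in no $\Phi(m')$ with $m'$ later in a fixed total order. The natural monomials to track have the shape
\[
\prod_{l\in L}\alpha_l\,\gamma_{j_1}\beta_{i_1}\prod_{t\in T}\beta_t\,\varepsilon_{j_2}\beta_{i_2}.
\]
Such a monomial records the sets $L$, $T\cup\{i_1,i_2\}$ and the two distinguished indices $j_1,j_2$. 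Recovering $m$ from this data is exactly where the normal-form conditions are used. For $s\ge1$ the conditions $j_1>t_s$ and $j_2>t_s$, together with the alternative $i_1<t_1$ or $j_1<j_2$, must pin down which $\beta$-indices are $i_1,i_2$ and which are middle. The competing monomials obtained from the other expansions, such as $\gamma_{i_1}\alpha_{j_1}$ or $\varepsilon_{i_2}\beta_{j_2}$, have to be compared with them.

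The main obstacle is this separation. The bare monomial above does not distinguish $i_1$ from $i_2$ or from the middle indices, since all of them carry $\beta$. I would therefore first try alternative monomials. For instance, the term $-\gamma_{i_1}\beta_{j_1}$ of the first commutator marks $i_1$ by $\gamma$, and the term $-\varepsilon_{i_2}\beta_{j_2}$ marks $i_2$ by $\varepsilon$. Then $\gamma$ and $\varepsilon$ identify $i_1,i_2$, while $j_1,j_2$ must be read off from $\beta$ via the inequalities. I would combine this with specializations that set $\alpha_k=0$ for chosen $k$, which forces $k$ out of the outside block. The analysis splits into the no-gap families $\mathcal B'$ and $\mathcal B''$, where $\mathcal B''$ is singled out by $j_1=n$ and $\mathcal B'$ by $i_1<j_2$, and the families $\mathcal B_{n,s}^{(4)}$ with $s\ge1$, handled by induction on $s$. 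The number $s$ is recoverable as the $\beta$-degree minus two on the $\alpha$-free part.

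If the monomial bookkeeping fails in some subcase, I would fall back on a dimension count. The count of $\mathcal B_n^{(4)}$ would be matched against an independent computation of $\dim P_n/(P_n\cap\operatorname{Id}^z(D_0))$, obtained through the characterization via the $e_{24}$-coordinate and the known basis of Theorem~\ref{identidadesd0}(2).
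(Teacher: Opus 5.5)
Your reduction is correct but does not make the problem easier, and the step you leave open is the whole lemma. Type $(4')$ values do lie in $Fe_{14}+Fe_{24}$. On $R$ the $e_{24}$-coordinate is exactly $\prod_{l\in L}\alpha_l\,[x_{j_1},x_{i_1}]_{23}\prod_{t\in T}\beta_t\,[x_{j_2},x_{i_2}]_{34}$, where the subscripts denote the $e_{23}$- and $e_{34}$-coordinates and $[x_{j_2},x_{i_2}]_{34}=\beta_{j_2}\varepsilon_{i_2}-\beta_{i_2}\varepsilon_{j_2}$. The $\zeta$'s never contribute, so nothing needs to be killed.

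In every monomial of $\Phi(m)$, each index occurs through exactly one of $\alpha,\beta,\gamma,\varepsilon$. So a monomial is the same thing as a substitution by $e_{22},e_{33},e_{23},e_{34}$ with one $e_{23}$ and one $e_{34}$. Its coefficient is the $e_{24}$-entry of $m$ under that substitution. Independence of the $\Phi(m)$ is therefore literally equivalent to the lemma. What must be proved is this: assign to each $m\in\mathcal B_n^{(4)}$ such a substitution, and fix a well-founded order, so that the substitution for $m$ is nonzero on $m$ and vanishes on every $m'$ that may still carry a nonzero coefficient. You call this ``the main obstacle'' and do not carry it out.

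Your fallback does not close the gap. Theorem~\ref{identidadesd0}(2) only gives a basis of $P_n$ modulo $\operatorname{Id}(D_0)$. Extracting $c_n^z(D_0)$ from it requires the rank of the $e_{24}$-coordinate map on that basis, which is the same problem.

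The bookkeeping is also subtler than your sketch allows, because a monomial does not determine $s$. For $n=6$, the monomial $\alpha_1\alpha_2\gamma_5\beta_3\beta_6\varepsilon_4$ occurs in $\Phi(m)$ for $m=x_1x_2[x_5,x_3][x_6,x_4]\in\mathcal B'$. It also occurs in $\Phi(m')$ for $m'=x_1[x_5,x_2]x_3[x_6,x_4]\in\mathcal B_{6,1}^{(4)}$, where the $\gamma$-index is paired with an outside index.

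The paper handles this with a two-level triangularity. Take $s$ maximal among nonzero coefficients, then $\kappa(m)=(j_2,j_1,-i_1,-i_2)$ lexicographically minimal. Always send $L\mapsto e_{22}$ and $i_2\mapsto e_{34}$, and then:
\begin{enumerate}
\item if $m\in\mathcal B'$ or $i_1<t_1$: $j_1\mapsto e_{23}$ and $T\cup\{i_1,j_2\}\mapsto e_{33}$;
\item if $m\in\mathcal B''$: $j_1\mapsto e_{22}$, $i_1\mapsto e_{23}$ and $j_2\mapsto e_{33}$;
\item if $t_1<i_1$, so that $j_1<j_2$: $i_1\mapsto e_{23}$ and $T\cup\{j_1,j_2\}\mapsto e_{33}$.
\end{enumerate}
A competitor $m'$ with nonzero value has $s'=s+1$, so its coefficient is zero by maximality, or $s'=s$. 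In the latter case, in the first and third situations $j_2$ is the largest index sent to $e_{33}$. The inequalities $j_1,j_2>t_s$ forbid $j_2$ from sitting in the middle block of $m'$, and placing it in the first commutator lowers the first coordinate of $\kappa$. Once $j_2$ stays in the second commutator, the alternative $i_1<t_1$ or $j_1<j_2$ forces $m'=m$ or $\kappa(m')<\kappa(m)$. In the $\mathcal B''$ situation the only freedom is to replace $x_n$ in the first commutator, which lowers $j_1$.

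Note that the $e_{34}$ is placed on $i_2$, not on $j_2$ as in your ``natural'' monomial. Without an explicit choice of this kind and a verification of the competitor analysis, your proposal does not establish the lemma.
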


\begin{proof}
Suppose that $f=\sum_{m\in\mathcal B_n^{(4)}}\alpha_m m
 \in \operatorname{Id}^{z}(D_0).$ We prove that every coefficient $\alpha_m$ is zero.

For $m=
 x_{l_1}\cdots x_{l_r}
 [x_{j_1},x_{i_1}]
 x_{t_1}\cdots x_{t_s}
 [x_{j_2},x_{i_2}]
 \in\mathcal B_{n,s}^{(4)},$ write $L=\{l_1,\ldots,l_r\}$, $T=\{t_1,\ldots,t_s\},$ and set $\kappa(m)=(j_2,j_1,-i_1,-i_2),$ where these tuples are ordered lexicographically.

Assume, by contradiction, that some coefficient in the above
linear combination is nonzero. Choose $s$ maximal such that
$\alpha_m\neq0$ for some
$m\in\mathcal B_{n,s}^{(4)}$ and, among all such polynomials,
choose $m$ with $\kappa(m)$ minimal.

We associate with $m$ an evaluation
$\varphi_m:F\langle X\rangle\rightarrow D_0$ as follows.

Suppose first that either $s=0$ and $m\in\mathcal B'$, or
$s\geq1$ and $i_1<t_1$. Define
\[
 \varphi_m(x_k)=
 \begin{cases}
  e_{22},& k\in L,\\
  e_{33},& k\in T\cup\{i_1,j_2\},\\
  e_{23},& k=j_1,\\
  e_{34},& k=i_2.
 \end{cases}
\] Then $\varphi_m(m)
 =
 e_{22}[e_{23},e_{33}]e_{33}^{\,s}[e_{33},e_{34}]
 =
 e_{24}.$

If $s=0$ and $m\in\mathcal B''$, so that $j_1=n$, define
\[
 \varphi_m(x_k)=
 \begin{cases}
  e_{22},& k\in L\cup\{j_1\},\\
  e_{23},& k=i_1,\\
  e_{33},& k=j_2,\\
  e_{34},& k=i_2.
 \end{cases}
\]
In this case, $\varphi_m(m)
 =
 e_{22}[e_{22},e_{23}][e_{33},e_{34}]
 =
 e_{24}.$

Finally, suppose that $s\geq1$ and $t_1<i_1$. By the definition
of $\mathcal B_{n,s}^{(4)}$, we then have $j_1<j_2$. Define
\[
 \varphi_m(x_k)=
 \begin{cases}
  e_{22},& k\in L,\\
  e_{33},& k\in T\cup\{j_1,j_2\},\\
  e_{23},& k=i_1,\\
  e_{34},& k=i_2.
 \end{cases}
\]
Now $\varphi_m(m)
 =
 e_{22}[e_{33},e_{23}]e_{33}^{\,s}[e_{33},e_{34}]
 =
 -e_{24}.$

We claim that, in all three cases, if
$m'\in\mathcal B_{n,s'}^{(4)}$
 and $
 \varphi_m(m')\neq0,$ then either
$s'=s+1,$ or
$s'=s$ and $\bigl(m'=m\ \text{or}\ \kappa(m')<\kappa(m)\bigr).$

Indeed, this follows directly by tracing the only possible nonzero
product of matrix units. The variables in the initial block of $m'$
must be evaluated at $e_{22}$; its first commutator must evaluate to
$\pm e_{23}$; all variables in its middle block must be evaluated at
$e_{33}$; and its second commutator must evaluate to $\pm e_{34}$.
Thus every nonzero value is equal to $\pm e_{24}$.

Suppose first that $m\notin\mathcal B''$. If the diagonal entry
paired with $e_{23}$ in the first commutator is an $e_{22}$, one
variable is removed from the initial block and one additional
$e_{33}$ occurs in the middle block. Consequently, $s'=s+1$.
Otherwise, the diagonal entry paired with $e_{23}$ is an $e_{33}$,
the initial block is unchanged and hence $s'=s$.

It remains to consider the latter possibility. In the first case
above, $j_2$ is the largest index among the variables evaluated at
$e_{33}$ that may occur in the middle block or in the second
commutator. The condition that the first entry of the second
commutator be larger than every middle index implies that either
$j_2$ remains in the second commutator or it is transferred to the
first commutator. In the second alternative, the first coordinate
of $\kappa(m')$ is strictly smaller than that of $\kappa(m)$. In the
first alternative, either the first commutator is the one occurring
in $m$, or its smaller entry is strictly larger than $i_1$; in the
latter case,
$-i_1'<-i_1,$ and therefore again $\kappa(m')<\kappa(m)$.

In the third case, we have
$j_2>j_1>t_s.$ The normal-form inequalities force $j_2$ to remain in the second
commutator, unless the first coordinate of $\kappa(m')$ decreases.
Once $j_2$ remains fixed, the same inequalities force $j_1$ to
remain in the first commutator. Hence either $m'=m$ or
$\kappa(m')<\kappa(m)$.

For $m\in\mathcal B''$, the variable $x_{j_1}=x_n$ is evaluated
at $e_{22}$, while $x_{j_2}$ and $x_{i_2}$ are the unique variables
evaluated at $e_{33}$ and $e_{34}$, respectively. Hence the second
commutator remains unchanged and $s'=0=s$. Any different nonzero
normal polynomial must replace $x_{j_1}$ in the first commutator
by some $x_l$ with $l\in L$. Its first upper index is therefore
strictly smaller than $n=j_1$ and so
$\kappa(m')<\kappa(m)$.

By the choice of $s$ and $\kappa(m)$, every other polynomial
with a nonzero value under $\varphi_m$ has zero coefficient
in $f$. Hence $\varphi_m(f)=\alpha_m\varphi_m(m)=\pm\alpha_m e_{24}$.
Since $f$ is central and $e_{24}\notin Z(D_0)=Fe_{14}$,
we obtain $\alpha_m=0$, a contradiction.
This proves the linear independence of $\mathcal B_n^{(4)}$
modulo $P_n\cap\operatorname{Id}^z(D_0)$.
\end{proof}

Combining Lemma~\ref{lem:linear-independence-B4} with Lemma~\ref{lemtype123}, we obtain that
$x_1\cdots x_n$ and all the normal forms constructed above
are linearly independent modulo
$P_n\cap\operatorname{Id}^z(D_0)$.

\subsection{The main result} For $n\geq5$, let $\mathcal B_n$ consist of the polynomial $x_1\cdots x_n$, the polynomials of types $(1)$, $(2)$ and $(3)$
defined above and the elements of $\mathcal B_n^{(4)}$
introduced in the preceding subsection.

\begin{theorem}\label{thm:central-polynomials-D0}
For the algebra $D_0$ the following statements hold.

\begin{enumerate}
\item[(1)]
The space of central polynomials of $D_0$ is the $T$-ideal
$$
\operatorname{Id}^z(D_0)
=
\left\langle
x_1[x_2,x_3][x_4,x_5]x_6,\,
[x_1,x_2][x_3,x_4][x_5,x_6],\,
St_5(x_1,\ldots,x_5)
\right\rangle_T.
$$

\item[(2)]
For every $n\geq5$, the family $\mathcal B_n$ is a basis of
${P_n}$ modulo ${P_n\cap\operatorname{Id}^z(D_0)}.$

\item[(3)]
For every $n\geq5$,
$
c_n^z(D_0)
=
2^{n-1}(n^2-4n+2)-n^2+7n-3$ and $\delta_n(D_0)
=
2^{n-3}(n^3-9n^2+20n-8)
+
3n^2-9n+3.$
\end{enumerate}
\end{theorem}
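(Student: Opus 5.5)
The proof assembles the results of this section; the only genuinely new ingredient is a count. We already know $I\subseteq\operatorname{Id}^z(D_0)$, where $I$ is the $T$-ideal in item (1). Hence it suffices to show that $\mathcal B_n$ spans $P_n$ modulo $P_n\cap I$ and is linearly independent modulo $P_n\cap\operatorname{Id}^z(D_0)$. Then $P_n\cap I=P_n\cap\operatorname{Id}^z(D_0)$ for all $n\ge5$. For $n\le4$ both intersections vanish: for $I$ this follows from the degrees of its generators, and for $\operatorname{Id}^z(D_0)$ it was shown above. Since both are $T$-spaces and $\operatorname{char}F=0$, so that each is determined by its multilinear components, this gives (1) and (2) together.

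\emph{Spanning.} Modulo $I$, every element of $P_n$ is a combination of $x_1\cdots x_n$ and polynomials of types $(1)$–$(4)$, by the $UT_3$ standard basis. Lemma~\ref{lem} reduces type $(4)$ to type $(4')$. Lemma~\ref{lem:no-gap-generation} handles $\mathcal M_{n,0}^{(4)}$. For $s\ge1$, Lemma~\ref{lem:last-order-condition} reduces $\mathcal M_{n,s}^{(4)}$ modulo $I+\sum_{k<s}\mathcal M^{(4)}_{n,k}$. An induction on $s$ then shows that $\mathcal B_n^{(4)}$ spans $\mathcal M_n^{(4)}$ modulo $I$.

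\emph{Independence.} Lemma~\ref{lemtype123} forces $\alpha=f_1=f_2=f_3=0$ in any relation, since its evaluations also annihilate every type $(4')$ polynomial. Lemma~\ref{lem:linear-independence-B4} then kills the remaining coefficients.

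For (3), we have $c_n^z(D_0)=|\mathcal B_n|$, and $\delta_n(D_0)=c_n(D_0)-c_n^z(D_0)$ by Theorem~\ref{identidadesordinarias}(3). That formula is stated only for $n\ge7$, so $n=5,6$ would have to be checked from the basis in Theorem~\ref{identidadesordinarias}(2). The standard $UT_3$ counts give:
\begin{itemize}
\item type $(1)$: $n-1$;
\item type $(2)$: $\sum_{k}\binom{n}{k}(n-k-1)$;
\item type $(3)$: $\sum_k\binom{n}{k}(k-1)(n-k-1)$.
\end{itemize}
These yield closed forms in powers of $2$ times polynomials in $n$. For $\mathcal B_{n,0}^{(4)}$ we use $6\binom n4-\binom{n-1}4$ from Lemma~\ref{lem:no-gap-generation}.

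\emph{Main obstacle: counting $\mathcal B_{n,s}^{(4)}$ for $s\ge1$.} First choose the set $W$ of the $s+4$ indices outside the initial block, in $\binom{n}{s+4}$ ways. The block is then determined by $W$, and we count configurations of $W$ satisfying the normal-form constraints. Write $w_1<w_2$ for the two largest elements of $W$. The conditions $j_1,j_2>t_s$ force $\{j_1,j_2\}=\{w_1,w_2\}$, except when an $i$ exceeds some $t$.

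I would split into two cases.
\begin{itemize}
\item \emph{Case $j_1<j_2$.} Then $j_2=w_2$, and $j_1$ and the middle block are constrained only by $j_1>t_s$. Relabel $W$ as $\{1,\dots,s+4\}$ and count choices of $i_1,i_2$ and of $j_1$ exceeding all remaining middle elements.
\item \emph{Case $j_1>j_2$ with $i_1<t_1$.} Here $j_1=w_2$, and $i_1$ must be smaller than every middle element.
\end{itemize}
Each case gives an explicit polynomial in $s$, say $N(s)$, and $|\mathcal B^{(4)}_{n,s}|=\binom{n}{s+4}N(s)$. Summing over $1\le s\le n-5$ with $r\ge1$, that is, excluding $s=n-4$, gives sums of the form $\sum_s\binom{n}{s+4}\cdot\mathrm{poly}(s)$. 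These evaluate via $\sum_k\binom nk=2^n$, $\sum_k k\binom nk=n2^{n-1}$ and $\sum_k k^2\binom nk=n(n+1)2^{n-2}$, minus boundary corrections. Those boundary corrections produce the polynomial tail $-n^2+7n-3$.

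Finally, I would check the resulting formula against a direct count at $n=5$ and $n=6$, and then subtract from $c_n(D_0)$ to obtain $\delta_n(D_0)$. I expect the bookkeeping of these boundary terms in the $s\ge1$ count to be the delicate step.
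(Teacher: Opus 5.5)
Your proposal is correct and follows the paper's proof essentially step by step. The shared steps are the inclusion $I\subseteq\operatorname{Id}^z(D_0)$, spanning through the $UT_3$ basis and the reduction lemmas with induction on $s$, independence through Lemmas~\ref{lemtype123} and~\ref{lem:linear-independence-B4}, and the same term-by-term count. The only variation is in counting $\mathcal B^{(4)}_{n,s}$: you split according to $j_1<j_2$ versus $j_1>j_2,\ i_1<t_1$, which works out to $(s+1)(s+3)+(s+2)$, while the paper splits by the position of $t_s$, giving $3+(3s+1)+(s+1)^2$. Both equal $s^2+5s+5$, and summing reproduces the paper's $(n^2-5n+4)2^{n-2}-n^2+5n-3$.
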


\begin{proof} We have already proved that
$I\subseteq\operatorname{Id}^z(D_0).$ The reduction lemmas of the previous subsections show that every polynomial in $P_n$ is congruent modulo $I$ to a linear combination of the elements of $\mathcal B_n$. By the linear independence proved above, no nonzero linear combination of elements of $\mathcal B_n$ is a central polynomial of $D_0$. Therefore $P_n\cap\operatorname{Id}^z(D_0)
=P_n\cap I$
for every $n\geq5$. Since $D_0$ has no nonzero central polynomials of degree at most four and the ground field has characteristic zero,
multilinearization yields $\operatorname{Id}^z(D_0)=I$. This proves items (1) and (2).

It remains to count the elements of $\mathcal B_n$. The polynomial $x_1\cdots x_n$ together with the first two
families contributes $c_n(UT_2)=2^{n-1}(n-2)+2.$ Denoting the family of type $(3)$ by $\mathcal B_n^{(3)}$,
we have
\begin{align*}
\left|\mathcal B_n^{(3)}\right|=
\sum_{k=2}^{n-2}
\binom{n}{k}
(k-1)(n-k-1)=
(n^2-5n+4)2^{n-2}+2(n-1).
\end{align*}

For the fourth family, we already know that
$$
\left|\mathcal B_{n,0}^{(4)}\right|
=
6\binom{n}{4}
-
\binom{n-1}{4}.
$$
Now fix $1\leq s\leq n-5$. Choose the $s+4$ indices which occur in the middle block and in the two commutators. There are $\binom{n}{s+4}
$ choices. Once these indices are fixed and arranged increasingly, the condition
$
j_1>t_s$ and $j_2>t_s$
implies that the position of $t_s$ can only be $s$, $s+1$ or $s+2$. The numbers of admissible arrangements in these three cases are, respectively,
$3$, $3s+1$ and $(s+1)^2$. Hence
$$
\left|\mathcal B_{n,s}^{(4)}\right|
=
(s^2+5s+5)
\binom{n}{s+4}.
$$
Consequently,
$$\left|\mathcal B_n^{(4)}\right|
=
6\binom{n}{4}
-
\binom{n-1}{4}
+
\sum_{s=1}^{n-5}
(s^2+5s+5)
\binom{n}{s+4}=
(n^2-5n+4)2^{n-2}
-
n^2+5n-3.$$

Therefore
\begin{align*}
c_n^z(D_0)
={}&
c_n(UT_2)
+
\left|\mathcal B_n^{(3)}\right|
+
\left|\mathcal B_n^{(4)}\right|
=
2^{n-1}(n^2-4n+2)-n^2+7n-3.
\end{align*}

For $n\geq7$, the formula for $\delta_n(D_0)$ follows from
Theorem~\ref{identidadesordinarias} and
$\delta_n(D_0)=c_n(D_0)-c_n^z(D_0)$. The cases $n\in \{5,6\}$ can be verified directly using the basis described above and Theorem \ref{identidadesordinarias}\textup{(1)}.
\end{proof}

\section{Group gradings on varieties of almost polynomial $\delta$-growth}

Throughout this section, $G$ denotes an arbitrary group with identity
$1_G$, unless otherwise stated.

Group gradings on the Grassmann algebra have been extensively studied;
for homogeneous gradings and their graded identities, see
\cite{DiVincenzoDaSilva2009,Centrone2016,GuimaraesKoshlukov2021},
while nonhomogeneous $\mathbb Z_2$-gradings were considered in
\cite{FidelesEtAl2024,KoshlukovGuimaraes2023}.

Given $\mathbf g=(g_1,\ldots,g_m)\in G^m$, the associated
\emph{elementary grading} on $UT_m$ is defined by
\[
\deg e_{ij}=g_i^{-1}g_j,\qquad 1\leq i\leq j\leq m.
\]
We also call elementary the grading induced on any graded subalgebra
of $UT_m$. Multiplying all entries of $\mathbf g$ on the left by the
same element does not change the grading, so we may assume $g_1=1_G$.

For the algebra $D$, Gomez Parada and Koshlukov proved that every
grading by an abelian group is isomorphic to an elementary grading
\cite[Theorem~1]{Gomezparada}. We now establish the corresponding
result for $D_0$, with no restriction on the grading group.

\begin{lemma}\label{gradedanulador}
Let $(D_0,\Gamma)$ be a $G$-graded algebra and let $S\subseteq D_0$
be a graded subspace. Then its right, left and two-sided annihilators
$\operatorname{Ann}_{D_0}^{r}(S),$ $
\operatorname{Ann}_{D_0}^{l}(S),$ $
\operatorname{Ann}_{D_0}(S)$ are graded subspaces of $D_0$.
\end{lemma}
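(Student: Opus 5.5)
The plan is to prove that the annihilator of a graded subspace is itself graded, using only the multiplicativity of the grading, $A_gA_h\subseteq A_{gh}$, and the uniqueness of homogeneous decompositions. Nothing specific to $D_0$ is needed: the argument works for any $G$-graded algebra, so the particular choice of $\Gamma$ plays no role.

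I would first handle the right annihilator $\operatorname{Ann}_{D_0}^{r}(S)=\{a\in D_0\mid Sa=0\}$. Since $S$ is graded, it is spanned by its homogeneous elements, namely $S=\bigoplus_{g}(S\cap (D_0)_g)$. Therefore $a\in\operatorname{Ann}^r_{D_0}(S)$ if and only if $sa=0$ for every homogeneous $s\in S$. Take such an $a$ and write $a=\sum_{h\in G}a_h$ with $a_h\in(D_0)_h$, only finitely many nonzero. For homogeneous $s\in S\cap(D_0)_g$ we get
\[
0=sa=\sum_{h\in G}sa_h,\qquad sa_h\in(D_0)_{gh}.
\]
Because $h\mapsto gh$ is injective, the summands lie in distinct homogeneous components. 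The sum is direct, so each $sa_h=0$. Hence every component $a_h$ annihilates all homogeneous elements of $S$, and therefore annihilates $S$. This shows $\operatorname{Ann}^r_{D_0}(S)=\bigoplus_h\bigl(\operatorname{Ann}^r_{D_0}(S)\cap(D_0)_h\bigr)$, so the right annihilator is graded.

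The left annihilator is treated symmetrically. We use $a_hs\in(D_0)_{hg}$ together with the injectivity of $h\mapsto hg$; no commutativity of $G$ is required, since left and right translations are each injective. Finally, the two-sided annihilator is $\operatorname{Ann}^l_{D_0}(S)\cap\operatorname{Ann}^r_{D_0}(S)$, and an intersection of graded subspaces is graded: if $a$ lies in both, each homogeneous component of $a$ lies in both by the previous two steps.

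There is no real obstacle here. The only point needing care is the reduction to homogeneous $s$, which is exactly where we use that $S$ is graded rather than an arbitrary subspace. We also need that distinct $h$ give distinct degrees $gh$, which holds for any group.
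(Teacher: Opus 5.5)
Your proposal is correct and follows essentially the same argument as the paper: decompose an annihilating element into homogeneous components, multiply by a homogeneous $s\in S$, use that the degrees $gh$ (resp.\ $hg$) are pairwise distinct to kill each $sa_h$ (resp.\ $a_hs$), and conclude via the gradedness of $S$, with the two-sided case obtained as the intersection. Your remark that nothing beyond the grading axioms is used (in particular no commutativity of $G$) is accurate.
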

\begin{proof}
Let $x=\sum_{g\in G}x_g\in\operatorname{Ann}_{D_0}^{r}(S)$,
where $x_g\in(D_0)_g$. For $s\in S\cap(D_0)_h$, we have
\[
0=sx=\sum_{g\in G}sx_g.
\]
Since the degrees $hg$ are distinct as $g$ varies, $sx_g=0$ for
every $g$. As $S$ is graded, each $x_g$ belongs to
$\operatorname{Ann}_{D_0}^{r}(S)$. The left annihilator is treated
similarly and the two-sided annihilator is their intersection.
\end{proof}

\begin{lemma} \label{idempotents}
Every $G$-grading $\Gamma$ on $D_0$ is isomorphic to a grading
$\Gamma'$ in which $e_{22}$ and $e_{33}$ are homogeneous of degree $1_G$.
\end{lemma}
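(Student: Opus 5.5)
The plan is to reduce the statement to a graded Wedderburn--Malcev decomposition and then use an annihilator argument, in the spirit of Lemma~\ref{gradedanulador}, to force the semisimple part into the identity component. Write $J=\operatorname{span}\{e_{12},e_{13},e_{14},e_{23},e_{24},e_{34}\}$ for the Jacobson radical of $D_0$, so that $D_0=(Fe_{22}+Fe_{33})\oplus J$.

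\emph{Step 1: a graded decomposition.} Since $D_0$ is finite-dimensional, the support of $\Gamma$ is finite, so we may replace $G$ by the subgroup generated by the support. I will invoke the known results for finite-dimensional graded algebras in characteristic zero: the radical $J$ is a graded ideal, and there is a graded semisimple subalgebra $B$ with $D_0=B\oplus J$. This is the step I expect to be the main obstacle, because for arbitrary (nonabelian, possibly infinite) groups the standard references are stated with some restrictions. If these results cannot be quoted cleanly, my fallback is to work directly with the identity component $(D_0)_{1_G}$. It is a subalgebra, and I would show it contains a non-nilpotent element by a degree/trace argument, which would yield homogeneous idempotents of degree $1_G$.

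Assuming Step 1, $B\cong D_0/J\cong F\times F$ as graded algebras. Let $\varepsilon_1,\varepsilon_2$ be its primitive idempotents, lifting $e_{22}$ and $e_{33}$ respectively. The identity $\varepsilon_1+\varepsilon_2$ of $B$ is homogeneous of degree $1_G$. Hence either both $\varepsilon_i$ are homogeneous of degree $1_G$, or $B$ has a homogeneous element $u=\varepsilon_1-\varepsilon_2$ of some degree $g\neq 1_G$, with $u^2=\varepsilon_1+\varepsilon_2$ and $g^2=1_G$. Our goal is to rule out the second alternative.

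\emph{Step 2: excluding the swap.} Apply Lemma~\ref{gradedanulador} to the graded subspace $D_0$. A direct check gives
\[
V=\operatorname{Ann}^{r}_{D_0}(D_0)=\operatorname{span}\{e_{12},e_{13},e_{14}\},
\]
because no element of $D_0$ has a nonzero entry in column $1$. So $V$ is graded. Since $J$ is graded, $VJ=\operatorname{span}\{e_{13},e_{14}\}$ is graded as well. Hence the quotient $V/VJ$ is a one-dimensional graded space, spanned by the class of $e_{12}$ and concentrated in a single degree $h$. Right multiplication by $D_0$ preserves $V$ and $VJ$, and $J$ acts trivially on $V/VJ$. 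On this quotient $\varepsilon_1$ acts as the identity and $\varepsilon_2$ acts as zero, because they agree with $e_{22}$ and $e_{33}$ modulo $J$. Therefore right multiplication by $u$ acts as the identity on $V/VJ$. On the other hand, it maps degree $h$ to degree $hg$, so $g=1_G$. Consequently $\varepsilon_1,\varepsilon_2\in(D_0)_{1_G}$.

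\emph{Step 3: transporting the grading.} The elements $\varepsilon_1,\varepsilon_2$ are orthogonal idempotents with $\varepsilon_1\equiv e_{22}$ and $\varepsilon_2\equiv e_{33}$ modulo $J$. By the conjugacy part of the Wedderburn--Malcev theorem, there is an automorphism $\phi$ of $D_0$ with $\phi(\varepsilon_1)=e_{22}$ and $\phi(\varepsilon_2)=e_{33}$. Concretely, one can take conjugation by $1+j$ with $j\in J$, computed in the unitization. Define $\Gamma'$ by $(D_0)'_g=\phi((D_0)_g)$. Then $\phi$ is a graded isomorphism $(D_0,\Gamma)\to(D_0,\Gamma')$, and $e_{22},e_{33}$ are homogeneous of degree $1_G$ in $\Gamma'$, which proves Lemma~\ref{idempotents}.
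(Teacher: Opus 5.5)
Your proof is correct, but its key step differs from the paper's. Both arguments use the same inputs: the graded radical, a graded Wedderburn--Malcev complement $B$, and ordinary conjugacy in the unitization. The paper cites exactly this for arbitrary $G$, so you do not need your fallback. As sketched, the fallback would not suffice anyway, since one non-nilpotent element of $(D_0)_{1_G}$ yields one idempotent, not lifts of both $e_{22}$ and $e_{33}$.

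The paper transports the grading \emph{first}, via an automorphism $\psi$ with $\psi(B)=Q=Fe_{22}\oplus Fe_{33}$. Then $E=e_{22}+e_{33}$ has degree $1_G$, so $S=EJE=Fe_{23}$ is graded. Lemma~\ref{gradedanulador} then makes $Q\cap\operatorname{Ann}^{r}_{D_0}(S)=Fe_{22}$ and $Q\cap\operatorname{Ann}^{l}_{D_0}(S)=Fe_{33}$ graded, so $e_{22},e_{33}$ are homogeneous idempotents, hence of degree $1_G$.

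You instead stay in the original grading. You reduce to the two possible gradings of the two-dimensional algebra $B\cong F\times F$: trivial, or $\varepsilon_1-\varepsilon_2$ homogeneous of degree $g\neq 1_G$. You exclude the second with the one-dimensional graded right module $V/VJ$, where $V=\operatorname{Ann}^{r}_{D_0}(D_0)$, on which $\varepsilon_1-\varepsilon_2$ acts as the identity. Only then do you transport.

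The paper's route is shorter, because intersecting graded subspaces gives homogeneity of each idempotent directly, with no case analysis on $B$. Yours shows explicitly that the only possible obstruction is a swap of the two idempotents, and rules it out with a degree invariant. That argument is more conceptual and carries over to other algebras whose primitive idempotents act differently on some graded subquotient.

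Your dichotomy (``Hence either \dots or \dots'') needs one more line. Suppose $\dim B_{1_G}=1$ and $v\in B_g$ is homogeneous with $g\neq 1_G$. Then $v^2\neq 0$, because $F\times F$ has no nonzero nilpotents. This forces $g^2=1_G$ and $v^2\in F(\varepsilon_1+\varepsilon_2)$, hence $v\in F(\varepsilon_1-\varepsilon_2)$.
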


\begin{proof}Let $J=J(D_0)$. Since $D_0$ is finite-dimensional and
$\operatorname{char}F=0$, the radical $J$ is graded. Moreover,
$D_0/J\cong F\oplus F$ is separable, so the graded
Wedderburn--Malcev theorem yields a graded semisimple subalgebra
$B\subseteq D_0$ such that $D_0=B\oplus J$;
see \cite[Section~3]{GordienkoCostability}.

Set $Q=Fe_{22}\oplus Fe_{33}$. By the conjugacy part of the ordinary
Wedderburn--Malcev theorem, applied to the unitization of $D_0$,
there is an automorphism $\psi$ of $D_0$ such that $\psi(B)=Q$.
Define the transported grading $\Gamma'$ by
$(D_0)'_g=\psi((D_0)_g)$ for $g\in G$. Then $\Gamma'$ is isomorphic
to $\Gamma$ and both $Q$ and $J$ are graded with respect to $\Gamma'$.

Since $E=e_{22}+e_{33}$ is the identity of the graded algebra $Q$,
it has degree $1_G$. Hence $S=EJE=Fe_{23}$ is graded.
By Lemma~\ref{gradedanulador}, the subspaces
\[
Q\cap\operatorname{Ann}_{D_0}^{r}(S)=Fe_{22},
\qquad
Q\cap\operatorname{Ann}_{D_0}^{l}(S)=Fe_{33}
\]
are graded. Thus $e_{22}$ and $e_{33}$ are homogeneous and their
idempotency forces both degrees to be $1_G$.
\end{proof}

\begin{lemma} \label{elementoshomogeneos}
A $G$-grading $\Gamma$ on $D_0$ is elementary if and only if
$e_{22},e_{33},e_{12},e_{23},e_{34}$ are homogeneous.
\end{lemma}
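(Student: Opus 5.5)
We prove both directions directly from the multiplication table of $D_0$, whose basis is $e_{22},e_{33},e_{12},e_{13},e_{14},e_{23},e_{24},e_{34}$.

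For the forward direction, suppose $\Gamma$ is elementary. Then $\Gamma$ is induced from an elementary grading on $UT_4$ given by some $\mathbf g=(g_1,\ldots,g_4)$. Every matrix unit $e_{ij}$ is homogeneous of degree $g_i^{-1}g_j$. Since $e_{22},e_{33},e_{12},e_{23},e_{34}\in D_0$, these five elements are homogeneous.

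For the converse, assume $e_{22},e_{33},e_{12},e_{23},e_{34}$ are homogeneous. Since $e_{22}$ and $e_{33}$ are nonzero idempotents, $\deg e_{22}=\deg e_{33}=1_G$. Set $a=\deg e_{12}$, $b=\deg e_{23}$ and $c=\deg e_{34}$. The remaining basis elements are products of these:
\[
e_{13}=e_{12}e_{23},\quad e_{24}=e_{23}e_{34},\quad e_{14}=e_{12}e_{23}e_{34}.
\]
Each product is nonzero, so each is homogeneous, of degree $ab$, $bc$ and $abc$ respectively. Thus the basis of matrix units consists of homogeneous elements, and the grading is completely determined by $a,b,c$, namely $(D_0)_g$ is the span of the basis elements of degree $g$.

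Now choose
\[
\mathbf g=(1_G,\;a,\;ab,\;abc).
\]
Then $g_1^{-1}g_2=a$, $g_2^{-1}g_3=b$, $g_3^{-1}g_4=c$ and $g_i^{-1}g_i=1_G$. Since the group may be nonabelian, we also check the composite degrees:
\[
g_1^{-1}g_3=ab,\quad g_2^{-1}g_4=a^{-1}abc=bc,\quad g_1^{-1}g_4=abc.
\]
These match the degrees found above. Hence $\Gamma$ coincides on a homogeneous basis with the grading induced by the elementary grading of $UT_4$ defined by $\mathbf g$. Two gradings that agree on a homogeneous basis are equal, so $\Gamma$ is elementary.

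No step here is a genuine obstacle. The one point requiring care is the ordering of products in a nonabelian group, and the choice $\mathbf g=(1_G,a,ab,abc)$ handles it.
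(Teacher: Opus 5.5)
Your proof is correct and matches the paper's argument: the idempotents $e_{22},e_{33}$ have degree $1_G$, the factorizations $e_{13}=e_{12}e_{23}$, $e_{24}=e_{23}e_{34}$, $e_{14}=e_{12}e_{23}e_{34}$ make every matrix unit homogeneous, and the tuple $(1_G,a,ab,abc)$ reproduces all the degrees. Your explicit checks of the composite degrees in a possibly nonabelian group, and of why agreement on a homogeneous basis forces equality of the gradings, are points the paper leaves implicit.
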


\begin{proof}
The forward implication follows from the definition.
Conversely, suppose that these matrix units are homogeneous and write
$\deg e_{12}=g$, $\deg e_{23}=r$ and $\deg e_{34}=h.$ The idempotents $e_{22}$ and $e_{33}$ have degree $1_G$. Since
\[
e_{13}=e_{12}e_{23},\qquad
e_{24}=e_{23}e_{34},\qquad
e_{14}=e_{12}e_{23}e_{34},
\]
all basis matrix units are homogeneous with the degrees prescribed
by the tuple $(1_G,g,gr,grh)$. Hence $\Gamma$ is elementary.
\end{proof}

\begin{lemma} \label{e22e33}
A $G$-grading $\Gamma$ on $D_0$ is elementary if and only if
$e_{22},e_{33}\in(D_0)_{1_G}$.
\end{lemma}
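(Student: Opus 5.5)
The forward implication is immediate: in an elementary grading given by $(g_1,g_2,g_3,g_4)$, the degree of $e_{ii}$ is $g_i^{-1}g_i=1_G$. For the converse, by Lemma~\ref{elementoshomogeneos} it suffices to show that $e_{12},e_{23},e_{34}$ are homogeneous whenever $e_{22},e_{33}\in(D_0)_{1_G}$. Throughout, we use that left and right multiplication by a homogeneous element of degree $1_G$ maps each component $(D_0)_g$ into itself. Hence for homogeneous idempotents $\varepsilon,\varepsilon'$ of degree $1_G$, the Peirce-type subspaces $\varepsilon D_0\varepsilon'$, $(1-\varepsilon)D_0\varepsilon'$, etc. are graded, where $(1-\varepsilon)x$ is read as $x-\varepsilon x$.

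First, $e_{23}$. We have $e_{22}D_0e_{33}=Fe_{23}$. If $x=\sum_g x_g\in D_0$, then $e_{22}xe_{33}=\sum_g e_{22}x_ge_{33}$, with $e_{22}x_ge_{33}\in(D_0)_g$. So $Fe_{23}$ is a graded one-dimensional subspace, and $e_{23}$ is homogeneous.

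Next, $e_{12}$. Set $E=e_{22}+e_{33}$, which has degree $1_G$. Consider the graded subspace
\[
V=\{x-Ex : x\in D_0\}.
\]
It is graded because $x\mapsto x-Ex$ is a degree-preserving linear map. Compute $x-Ex$ for a general element: $Ex$ picks out rows $2,3$, so $x-Ex$ keeps row $1$, and $V=Fe_{12}+Fe_{13}+Fe_{14}$. Now multiply on the right by $e_{22}$. The map $v\mapsto ve_{22}$ is degree-preserving, and $Ve_{22}=Fe_{12}$. Hence $Fe_{12}$ is graded, and $e_{12}$ is homogeneous.

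Symmetrically, for $e_{34}$, use the graded subspace $W=\{x-xE\}=Fe_{14}+Fe_{24}+Fe_{34}$ (column $4$). Then $e_{33}W=Fe_{34}$, so $e_{34}$ is homogeneous. With $e_{22},e_{33},e_{12},e_{23},e_{34}$ all homogeneous, Lemma~\ref{elementoshomogeneos} shows that $\Gamma$ is elementary.

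There is no serious obstacle. The only delicate point is that $D_0$ is nonunital, so we avoid writing $1-E$ and instead work with the degree-preserving maps $x\mapsto x-Ex$ and $x\mapsto x-xE$. One then checks the explicit matrix computations identifying $V$, $W$, $Ve_{22}$ and $e_{33}W$.

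Combined with Lemma~\ref{idempotents}, this will give that every $G$-grading on $D_0$ is isomorphic to an elementary one.
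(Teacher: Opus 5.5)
Your proof is correct. It follows the same overall pattern as the paper's proof: multiply by the degree-$1_G$ idempotents to cut out the one-dimensional graded subspaces $Fe_{12}$, $Fe_{23}$, $Fe_{34}$, then invoke Lemma~\ref{elementoshomogeneos}.

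The difference is in how you isolate $e_{12}$ and $e_{34}$.
\begin{itemize}
\item The paper uses that the Jacobson radical $J=J(D_0)$ is a graded ideal. It then reads off $Je_{22}=Fe_{12}$, $e_{22}Je_{33}=Fe_{23}$ and $e_{33}J=Fe_{34}$.
\item You never use $J$. You take the degree-preserving maps $x\mapsto (x-Ex)e_{22}$ and $x\mapsto e_{33}(x-xE)$, whose images are $Fe_{12}$ and $Fe_{34}$. These are Peirce projections with respect to $1-E$ in the unitization, written so as to stay inside the nonunital algebra.
\end{itemize}

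Your route makes the converse direction purely elementary. It needs only $\deg e_{22}=\deg e_{33}=1_G$, and it works over any field and for any group. It does not appeal to the gradedness of the radical, which for arbitrary $G$ is a nontrivial input the paper takes from the literature in characteristic zero.

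In the paper's context the saving is modest, because gradedness of $J$ is already used in Lemma~\ref{idempotents}. Still, your version makes Lemma~\ref{e22e33} self-contained.
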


\begin{proof}
The forward implication is immediate. Conversely, suppose that
$e_{22},e_{33}\in(D_0)_{1_G}$ and set $J=J(D_0)$. Since $J$ is graded,
the one-dimensional subspaces
\[
Je_{22}=Fe_{12},\qquad
e_{22}Je_{33}=Fe_{23},\qquad
e_{33}J=Fe_{34}
\]
are graded. Thus $e_{12},e_{23},e_{34}$ are homogeneous and
Lemma~\ref{elementoshomogeneos} completes the proof.
\end{proof}

Combining Lemmas~\ref{idempotents} and
\ref{e22e33}, we obtain the main result of this section.

\begin{theorem} \label{gradingond0}
Let $G$ be an arbitrary group and let $F$ be a field of characteristic
zero. Then every $G$-grading on $D_0$ is isomorphic to an elementary
grading.
\end{theorem}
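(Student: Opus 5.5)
The plan is to combine the two preceding lemmas directly. Let $\Gamma$ be an arbitrary $G$-grading on $D_0$.

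First I apply Lemma~\ref{idempotents}. It produces an automorphism $\psi$ of $D_0$ and the transported grading $\Gamma'$, defined by $(D_0)'_g=\psi((D_0)_g)$. By construction, $\psi$ is a graded isomorphism from $(D_0,\Gamma)$ to $(D_0,\Gamma')$, and in $\Gamma'$ both $e_{22}$ and $e_{33}$ are homogeneous of degree $1_G$.

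Second, I apply Lemma~\ref{e22e33} to $\Gamma'$. Since $e_{22},e_{33}\in(D_0)'_{1_G}$, that lemma says $\Gamma'$ is elementary. Concretely, $Je_{22}=Fe_{12}$, $e_{22}Je_{33}=Fe_{23}$ and $e_{33}J=Fe_{34}$ are graded, so $e_{12},e_{23},e_{34}$ are homogeneous. Lemma~\ref{elementoshomogeneos} then gives the tuple $(1_G,\deg e_{12},\deg e_{12}\deg e_{23},\deg e_{12}\deg e_{23}\deg e_{34})$ realizing $\Gamma'$ as an elementary grading. Hence $\Gamma\cong\Gamma'$ is isomorphic to an elementary grading, and no commutativity of $G$ is used anywhere.

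The only genuinely delicate step is already packaged inside Lemma~\ref{idempotents}, so I only need to make sure it holds for an arbitrary, possibly nonabelian, group $G$. That step needs the graded Wedderburn--Malcev decomposition, which rests on two facts: $J(D_0)$ is a graded ideal in characteristic zero, and a graded complement to it exists because $D_0/J\cong F\oplus F$ is separable. These hold for any group grading of a finite-dimensional algebra over a field of characteristic zero, as cited from \cite{GordienkoCostability}. It also needs conjugacy of Wedderburn--Malcev complements in the unitization, which is an ungraded statement and so is unaffected by the choice of $G$. With these checks in place, the theorem follows immediately from the two lemmas.
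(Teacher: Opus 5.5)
Your proposal is correct and follows the paper's own proof: the theorem is obtained by applying Lemma~\ref{idempotents} to pass to an isomorphic grading in which $e_{22},e_{33}$ have degree $1_G$, and then invoking Lemma~\ref{e22e33}, which relies on Lemma~\ref{elementoshomogeneos}. Your check that no step uses commutativity of $G$ also matches the paper. The graded Wedderburn--Malcev input is the one cited inside Lemma~\ref{idempotents}, and the conjugacy step is ungraded.
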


For $G=\mathbb Z_2$, we use additive notation. The matrix units
$e_{ij}$ with $1\leq i<j\leq4$ form a basis of the Jacobson
radical $J(D_0)$ and will be called \emph{radical matrix units}.
A homogeneous matrix unit is called \emph{even} or \emph{odd}
if it has degree $0$ or $1$, respectively.

The eight normalized tuples $(0,g_2,g_3,g_4)$ give the elementary
gradings listed in Table~\ref{tab:superalgebras-D0}.
The diagonal matrix units $e_{22}$ and $e_{33}$ are even in
every case, so the table lists only the radical matrix units,
separated according to their parity.

\begingroup
\renewcommand{\arraystretch}{1.15}
\setlength{\LTcapwidth}{\textwidth}

\begin{longtable}[c]{@{}ccll@{}}

    \caption{The elementary superalgebra structures on $D_0$.}
    \label{tab:superalgebras-D0}\\

    \toprule
    Algebra
    & Defining tuple
    & Even radical units
    & Odd radical units \\
    \midrule
    \endfirsthead

    \bottomrule
    \endlastfoot

    $\mathcal{A}_1$
    & $(0,0,0,0)$
    & $e_{12}, e_{13}, e_{14}, e_{23}, e_{24}, e_{34}$
    & $-$ \\

    $\mathcal{A}_2$
    & $(0,0,0,1)$
    & $e_{12}, e_{13}, e_{23}$
    & $e_{14}, e_{24}, e_{34}$ \\

    $\mathcal{A}_3$
    & $(0,0,1,1)$
    & $e_{12}, e_{34}$
    & $e_{13}, e_{14}, e_{23}, e_{24}$ \\

    $\mathcal{A}_4$
    & $(0,0,1,0)$
    & $e_{12}, e_{14}, e_{24}$
    & $e_{13}, e_{23}, e_{34}$ \\

    $\mathcal{A}_5$
    & $(0,1,1,1)$
    & $e_{23}, e_{24}, e_{34}$
    & $e_{12}, e_{13}, e_{14}$ \\

    $\mathcal{A}_6$
    & $(0,1,1,0)$
    & $e_{14}, e_{23}$
    & $e_{12}, e_{13}, e_{24}, e_{34}$ \\

    $\mathcal{A}_7$
    & $(0,1,0,0)$
    & $e_{13}, e_{14}, e_{34}$
    & $e_{12}, e_{23}, e_{24}$ \\

    $\mathcal{A}_8$
    & $(0,1,0,1)$
    & $e_{13}, e_{24}$
    & $e_{12}, e_{14}, e_{23}, e_{34}$ \\

\end{longtable}
\endgroup

In the following sections, we determine the graded identities
and codimensions of these superalgebras and show that they are
pairwise non-PI-equivalent as $\mathbb Z_2$-graded algebras.

\section{Graded identities of $\mathcal A_1$, $\mathcal A_2$
and $\mathcal A_5$}

In this section, we determine generators for the graded identities of $\mathcal A_1$, $\mathcal A_2$ and $\mathcal A_5$, describe multilinear bases for their relatively free graded algebras and compute their graded codimensions.

For $n\geq1$ and $0\leq q\leq n$, let $P_{n-q,q}$ denote the
subspace of $P_n^{\mathbb Z_2}$ spanned by multilinear monomials
containing exactly $q$ odd variables, allowing all choices of
their labels among $\{1,\ldots,n\}$. Set
\[
c_{n-q,q}(A)
=
\dim_F
\frac{P_{n-q,q}}
{P_{n-q,q}\cap\textnormal{Id}^{\mathbb Z_2}(A)}.
\]
Thus the choices of the odd labels are already included in this
number and
\begin{equation} \label{eqcodimensoes}
c_n^{\mathbb Z_2}(A)
=
\sum_{q=0}^{n}c_{n-q,q}(A).    
\end{equation}

We first consider the trivial grading $\mathcal A_1$. The following
result is an immediate consequence of Theorem~\ref{identidadesordinarias}. 
 
 \begin{theorem}  For the algebra $\mathcal{A}_1$ we have:
    \begin{enumerate}
        \item[(1)] $\textnormal{Id}^{\mathbb{Z}_2}(\mathcal{A}_1)= \langle y_1[y_2,y_3][y_4,y_5]y_6, z_1 \rangle_{T_{\mathbb{Z}_2}}$.
        \item[(2)] $c_n^{\mathbb{Z}_2}(\mathcal{A}_1)=2^{n-3}n(n-1)(n-4)+2n(n-1)$, for $n\geq 7$. 
    \end{enumerate}
\end{theorem}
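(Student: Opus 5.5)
The idea is to reduce everything to the ordinary identities of $D_0$ given in Theorem~\ref{identidadesordinarias}, using that the grading $\mathcal A_1$ is trivial, so $(\mathcal A_1)_{\bar 1}=0$ and $(\mathcal A_1)_{\bar 0}=D_0$.

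First we show that the right-hand side $J=\langle y_1[y_2,y_3][y_4,y_5]y_6,\,z_1\rangle_{T_{\mathbb Z_2}}$ is contained in $\operatorname{Id}^{\mathbb Z_2}(\mathcal A_1)$. Every odd variable can only be evaluated at $0$, so $z_1$ is a graded identity. The polynomial $y_1[y_2,y_3][y_4,y_5]y_6$ is obtained from the ordinary identity $x_1[x_2,x_3][x_4,x_5]x_6$ by renaming the variables as even ones, so it is also a graded identity.

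For the reverse inclusion, take $f\in\operatorname{Id}^{\mathbb Z_2}(\mathcal A_1)$. Modulo $\langle z_1\rangle_{T_{\mathbb Z_2}}$, every monomial that contains an odd variable can be discarded. Hence $f\equiv f_0(y_1,\dots,y_m)$, where $f_0$ involves only even variables. Since $(\mathcal A_1)_{\bar0}=D_0$, the polynomial $f_0$ vanishes under all substitutions in $D_0$. Identifying $y_i$ with $x_i$, $f_0$ is therefore an ordinary identity of $D_0$. By Theorem~\ref{identidadesordinarias}(1), $f_0$ is a linear combination of products $u\,\phi(x_1[x_2,x_3][x_4,x_5]x_6)\,v$, where $\phi$ is an endomorphism of $F\langle X\rangle$. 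Transporting these expressions through $x_i\mapsto y_i$ turns them into graded consequences of $y_1[y_2,y_3][y_4,y_5]y_6$, which requires substituting only even polynomials for even variables. This is allowed because $F\langle Y\rangle$ consists of even elements. Therefore $f\in J$, which proves item (1).

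For item (2), note that $P_{n-q,q}\subseteq\operatorname{Id}^{\mathbb Z_2}(\mathcal A_1)$ whenever $q\geq1$, so $c_{n-q,q}(\mathcal A_1)=0$ for those $q$. Moreover, $P_{n,0}$ is $P_n$ with the $x_i$ renamed $y_i$, and $P_{n,0}\cap\operatorname{Id}^{\mathbb Z_2}(\mathcal A_1)$ corresponds exactly to $P_n\cap\operatorname{Id}(D_0)$. Hence $c_{n,0}(\mathcal A_1)=c_n(D_0)$. Equation~\eqref{eqcodimensoes} together with Theorem~\ref{identidadesordinarias}(3) then gives the stated formula for $n\geq7$.

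No step presents a real obstacle. The only point needing a little care is checking that ordinary $T$-consequences translate into $T_{\mathbb Z_2}$-consequences; this holds because only even substitutions are needed.
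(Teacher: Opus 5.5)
Your proposal is correct and is the argument the paper has in mind. The paper states this result as an immediate consequence of Theorem~\ref{identidadesordinarias}: the odd component is zero, so $z_1$ kills every monomial containing an odd variable, and the even component is $D_0$ itself, so $c_n^{\mathbb Z_2}(\mathcal A_1)=c_{n,0}(\mathcal A_1)=c_n(D_0)$. Your check that ordinary $T$-consequences become $T_{\mathbb Z_2}$-consequences through even substitutions supplies the one detail the paper leaves implicit.
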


 We now turn to $\mathcal A_2$. Recall that
\[
\begin{aligned}
(\mathcal A_2)_{0}
=F e_{22}+F e_{33}+F e_{12}+F e_{13}+F e_{23}\quad \mbox{ and }\quad 
(\mathcal A_2)_{1}
=F e_{14}+F e_{24}+F e_{34}.
\end{aligned}
\]
Thus, the polynomials
\begin{equation}\label{sec6:a2-identities}
y_1[y_2,y_3][y_4,y_5],\qquad
z_1z_2,\qquad
z_1y_2
\end{equation}
are graded identities of $\mathcal A_2$. 

Let $I$ be the ${T_{\mathbb Z_2}}$-ideal generated by the polynomials in (\ref{sec6:a2-identities}). Then $I\subseteq\textnormal{Id}^{\mathbb Z_2}(\mathcal A_2).$  As a consequence,
$P_{n-q,q}\subseteq I$ for all  $2\leq q\leq n.$ Therefore, it suffices to consider the cases $q\in\{0,1\}$.

\begin{itemize}
    \item {\bf Case $q=0$.} \end{itemize}

By Proposition~\ref{prop:UT2-central}, every polynomial $f\in\operatorname{Id}(UT_2)$ in the even variables satisfies $y_t f\in I$. Indeed, multiplying a consequence of $[y_1,y_2][y_3,y_4]$ on the left by $y_t$ gives a consequence of $y_1[y_2,y_3][y_4,y_5]$. Thus, keeping the first variable fixed and applying the standard multilinear reduction for $UT_2$ to the remaining word, we obtain the following spanning
family modulo $I$:
\begin{equation}\label{sec6:a2-even}
y_t y_{i_1}\cdots y_{i_{n-1}},\qquad
y_t y_{i_1}\cdots y_{i_{s_1}}
[y_{j_1},y_{j_2},\ldots,y_{j_{s_2}}].
\end{equation}
In the first family, $i_1<\cdots<i_{n-1}$.
In the second family, $s_1\geq0$, $ s_2\geq2$, $s_1+s_2=n-1$, $i_1<\cdots<i_{s_1}$
and  $j_1>j_2<\cdots<j_{s_2}.$ 

\begin{proposition}\label{sec6:prop-a2-even}
For every $n\geq2$, the polynomials in \eqref{sec6:a2-even}
are linearly independent modulo
$\textnormal{Id}^{\mathbb Z_2}(\mathcal A_2)$. Moreover, $c_{n,0}(\mathcal A_2)
=
n c_{n-1}(UT_2)
=
n\bigl(2^{n-2}(n-3)+2\bigr)$, for all $n\geq 2.$
\end{proposition}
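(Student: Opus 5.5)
To prove the proposition, I will suppose that a linear combination
\[
f=\sum_t \alpha_t\, y_t y_{i_1}\cdots y_{i_{n-1}}+\sum \beta\, y_t y_{i_1}\cdots y_{i_{s_1}}[y_{j_1},\ldots,y_{j_{s_2}}]
\]
of the polynomials in \eqref{sec6:a2-even} is a graded identity of $\mathcal A_2$, and I will show that every coefficient vanishes by grouping the terms according to the distinguished first variable $y_t$.

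\emph{Reduction to a fixed $t$.} Fix $t$. I evaluate $y_t\mapsto e_{12}$ and every other variable at an element of the subalgebra $Q'=F e_{22}+Fe_{33}+Fe_{23}\cong UT_2$, which lies inside $(\mathcal A_2)_0$.

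\emph{Terms with a different leading variable vanish.} Consider a term whose leading variable is $y_{t'}$ with $t'\neq t$. In it, $e_{12}$ occurs in a non-initial position, so it is multiplied on the left by an element of $Q'$. Since $Q'e_{12}=0$, every such monomial is zero. In a commutator, $e_{12}$ can also occur on the left inside the bracket, but the whole commutator is still multiplied on the left by $y_{t'}$, whose value lies in $Q'$. Hence the commutator, whose value lies in $D_0$, enters a product $Q'\cdot(\text{stuff involving } e_{12})$. I need to double check this step carefully: since $e_{12}Q'\subseteq Fe_{12}+Fe_{13}$ and $Q'(Fe_{12}+Fe_{13})=0$, any product with a $Q'$ factor to the left of $e_{12}$ vanishes. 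This kills all terms with leading variable $y_{t'}\neq t$.

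\emph{Reduction to $UT_2$.} For the remaining terms, with leading variable $y_t$, the evaluation gives $e_{12}\,g(\bar a)$, where $g$ is the corresponding combination of the $UT_2$ standard basis in the $n-1$ remaining variables. The map $Q'\to e_{12}Q'$, $a\mapsto e_{12}a$, is injective on $Q'$, identifying $e_{22},e_{33},e_{23}$ with $e_{12},0,e_{13}$. This is not injective on $e_{33}$. So I will instead use the fact that $e_{12}a$ records the first row of $a$ in the $UT_2$ copy. Since $g(\bar a)$ ranges over elements of $Q'$, I need $e_{12}g(\bar a)=0$ for all substitutions to force $g\in\mathrm{Id}(UT_2)$. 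The obstacle is exactly this loss of the $(3,3)$-entry.

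\emph{Handling the lost entry.} To overcome it, I would use that the first row of the $UT_2$ relatively free algebra suffices: multilinear $g$ vanishes on $UT_2$ iff its $(1,1)$ and $(1,2)$ entries vanish for all substitutions. The $(2,2)$ entry of a multilinear polynomial equals $g(b_{22})$, a scalar multiple of the sum of the coefficients of the monomials. This scalar also appears in the $(1,1)$ entry when all variables are diagonal, so it is captured as well. Thus $g\in \mathrm{Id}(UT_2)$, and since the $UT_2$ standard basis is independent modulo $\mathrm{Id}(UT_2)$ by Proposition~\ref{prop:UT2-central}, all coefficients with leading variable $y_t$ vanish.

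\emph{Conclusion.} Varying $t$ gives linear independence. Counting then gives $n$ choices of $t$ times $c_{n-1}(UT_2)=2^{n-2}(n-3)+2$, which yields $c_{n,0}(\mathcal A_2)=n\bigl(2^{n-2}(n-3)+2\bigr)$ for $n\geq2$. For $n=2$, the second family has $s_2\le1$ and is empty, consistent with $c_1(UT_2)=1$.
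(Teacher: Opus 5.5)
Your proof is correct, but its key step differs from the paper's. The opening step is the same as in the paper. You fix the leading variable, send $y_t\mapsto e_{12}$ and all other variables into $Q'=Fe_{22}+Fe_{23}+Fe_{33}$, and every term led by some $y_{t'}$ with $t'\neq t$ vanishes. The reason is that in each of its monomials the factors preceding $e_{12}$ form a nonempty product of elements of $Q'$, which lies in $Q'$, and $Q'e_{12}=0$; no further double-checking is needed there.

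The two arguments then diverge. The paper isolates the coefficients one at a time with tailored substitutions. Sending all remaining variables to $e_{22}$ kills the coefficient of the commutator-free word. For each commutator word, it sends $y_{j_1}\mapsto e_{23}$, the outside variables to $e_{22}+e_{33}$ and the other commutator variables to $e_{33}$, which leaves exactly that word with value $e_{13}$.

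You instead use all substitutions from $Q'\cong UT_2$ at once, so $f$ evaluates to $e_{12}g_t(\bar a)$. This records exactly the first row of $g_t(\bar a)$ in $UT_2$-coordinates. Your observation that both diagonal entries of a value of $g_t$ equal the coefficient sum of $g_t$ times a product of diagonal entries correctly shows that the first row detects identities of $UT_2$. Hence $g_t\in\operatorname{Id}(UT_2)$, and the classical independence of the standard $UT_2$ basis finishes the argument.

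Your route is more structural: it explains the factor $n\,c_{n-1}(UT_2)$ as $n$ copies of the relatively free $UT_2$-component and avoids designing separating evaluations. The cost is that it uses the $UT_2$ basis as a black box. That is justified by the proposition on $UT_2$, since the standard spanning set has exactly $c_{n-1}(UT_2)$ elements. The paper's argument is self-contained, and its explicit evaluations are reused directly in the one-odd-variable case. Your method would also work there, reading off the two first-row entries via $z_l\mapsto e_{24}$ and $z_l\mapsto e_{34}$.

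Finally, delete the sentence asserting that $a\mapsto e_{12}a$ is injective on $Q'$. Its kernel is $Fe_{33}$, which is exactly why your diagonal-entry argument is needed.
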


\begin{proof}
Let $\mathcal H_1$ denote the set of admissible choices of indices
and block lengths in the second family of \eqref{sec6:a2-even}.
For $a\in\mathcal H_1$, set
\[
h_a=
y_t y_{i_1}\cdots y_{i_{s_1}}
[y_{j_1},y_{j_2},\ldots,y_{j_{s_2}}].
\]
For each $t$, let $i_1<\cdots<i_{n-1}$ be the remaining labels.
Suppose that
\[
f=
\sum_{t=1}^{n}
\lambda_t y_t y_{i_1}\cdots y_{i_{n-1}}
+
\sum_{a\in\mathcal H_1}\alpha_a h_a
\in\textnormal{Id}^{\mathbb Z_2}(\mathcal A_2).
\]

Fix $t$ and evaluate $y_t\mapsto e_{12}$ and
$y_r\mapsto e_{22}$ for $r\ne t$. A nonzero product must begin
with $y_t$ and every term containing a commutator vanishes.
Hence $f\mapsto\lambda_t e_{12}=0$, so all $\lambda_t$ vanish.

Now fix $a\in\mathcal H_1$ and consider the substitution $y_t\mapsto e_{12}$, $y_{j_1}\mapsto e_{23},$ $y_{i_r}\mapsto e_{22}+e_{33}$, for all $1\leq r\leq s_1$ and 
$y_{j_r}\mapsto e_{33}$ for all $2\leq r\leq s_2$. The selected polynomial $h_a$ evaluates to $e_{13}$. We show that it is the only polynomial with a nonzero value.

A nonzero term must begin with $y_t$ and $y_{j_1}$ must occur inside its commutator, since all other entries would be diagonal.
The variables evaluated at $e_{22}+e_{33}$ must lie outside the
commutator, since this element commutes with all remaining assigned
values. Conversely, those evaluated at $e_{33}$ must lie inside it,
since $e_{12}e_{33}=0$. Thus both blocks are determined. The
minimum-index convention in the commutator forces $y_{j_1}$ to be
its first entry and the increasing order of the other entries
determines $h_a$ uniquely. Therefore $f\mapsto\alpha_a e_{13}=0$,
proving independence.

For each of the $n$ choices of the first label $t$, the tail
ranges over the standard multilinear basis of $UT_2$ in
$n-1$ variables. Thus $c_{n,0}(\mathcal A_2)
=
n c_{n-1}(UT_2),$ for all $n\geq2.$
\end{proof}

\begin{itemize}
    \item {\bf Case $q=1$.} \end{itemize} 

Since $z_1y_2\in I$, every surviving monomial has its odd variable last. Applying the preceding reduction to its even factor, we obtain the following spanning family modulo $I$:
\begin{equation}\label{sec6:a2-oneodd}
y_t y_{i_1}\cdots y_{i_{n-2}}z_l,\qquad
y_t y_{i_1}\cdots y_{i_{s_1}}
[y_{j_1},y_{j_2},\ldots,y_{j_{s_2}}]z_l.
\end{equation}
In the first family, $i_1<\cdots<i_{n-2}$.
In the second family, $s_1\geq0$, $s_2\geq2$, $s_1+s_2=n-2$, $i_1<\cdots<i_{s_1}$ and $j_1>j_2<\cdots<j_{s_2}.$

\begin{proposition}\label{sec6:prop-a2-oneodd}
For every $n\geq3$, the polynomials in \eqref{sec6:a2-oneodd}
are linearly independent modulo
$\textnormal{Id}^{\mathbb Z_2}(\mathcal A_2)$. Moreover, $c_{n-1,1}(\mathcal A_2)=n(n-1)c_{n-2}(UT_2)$, for all $n\geq3.$
\end{proposition}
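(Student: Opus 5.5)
We follow the scheme of Proposition~\ref{sec6:prop-a2-even}, now with the odd variable $z_l$ forced to the last position. Suppose
\[
f=\sum_{t,l}\lambda_{t,l}\, y_t y_{i_1}\cdots y_{i_{n-2}}z_l+\sum_{a}\alpha_a h_a z_l\in\operatorname{Id}^{\mathbb Z_2}(\mathcal A_2),
\]
where $h_a z_l$ runs over the second family in \eqref{sec6:a2-oneodd}. For fixed $t$ and $l$, we will isolate the coefficients with these labels by a substitution that makes $y_t$ the only admissible first letter and $z_l$ the only odd variable; the latter is automatic, since there is exactly one odd variable.

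\textbf{First family.} Fix $t$ and $l$ and evaluate
\[
y_t\mapsto e_{12},\qquad y_r\mapsto e_{22}+e_{33}\ (r\neq t),\qquad z_l\mapsto e_{34}.
\]
Any monomial must begin with $y_t$, because $(e_{22}+e_{33})e_{12}=0$ and $e_{34}$ occurs last. Every term containing a commutator vanishes: either the commutator involves only copies of $E=e_{22}+e_{33}$, or it has $y_t$ inside, but $y_t$ is the first letter and hence outside the commutator. The value of $y_t y_{i_1}\cdots y_{i_{n-2}}z_l$ is $e_{12}E^{n-2}e_{34}=e_{14}$. Therefore $f\mapsto\lambda_{t,l}e_{14}$, and so $\lambda_{t,l}=0$.

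\textbf{Second family.} Fix $a$, i.e.\ $t$, $l$, the outer block $i_1<\cdots<i_{s_1}$ and the commutator $[y_{j_1},\ldots,y_{j_{s_2}}]$. Use the substitution from the proof of Proposition~\ref{sec6:prop-a2-even}, extended by $z_l\mapsto e_{34}$:
\[
y_t\mapsto e_{12},\quad y_{i_r}\mapsto e_{22}+e_{33},\quad y_{j_1}\mapsto e_{23},\quad y_{j_r}\mapsto e_{33}\ (r\ge2),\quad z_l\mapsto e_{34}.
\]
Then $h_a z_l\mapsto e_{13}e_{34}=e_{14}$. The uniqueness argument runs as before.
\begin{itemize}
\item[(i)] A nonzero term must start with $y_t$.
\item[(ii)] $y_{j_1}$ must lie in the commutator, since otherwise the commutator has only diagonal entries and vanishes.
\item[(iii)] Variables mapped to $E$ must lie outside the commutator, since $E$ commutes with $e_{23}$ and $e_{33}$.
\item[(iv)] Variables mapped to $e_{33}$ must lie inside it, since $e_{12}e_{33}=0$ and $E$ maps under right multiplication as needed.
\end{itemize}
The ordering conventions then force the term to be exactly $h_a z_l$. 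Right multiplication by $e_{34}$ does not affect this analysis, because the commutator's value $\pm e_{23}$ (after left factors $e_{12}$ and $E$'s) gives $e_{13}e_{34}=e_{14}\neq0$, while the other terms are already zero before $z_l$. The first-family terms also vanish here: they contain $y_{j_1}\mapsto e_{23}$ and some $e_{33}$ outside any commutator, with increasing order placing them after the $E$'s. The delicate point is that $e_{12}E\cdots e_{23}e_{33}e_{34}$ is nonzero, so these terms need not vanish. This is harmless, since the $\lambda_{t,l}$ are already known to be zero. Hence $f\mapsto\alpha_a e_{14}$, and $\alpha_a=0$.

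\textbf{Count.} There are $n$ choices of $l$ and $n-1$ choices of $t$, and the middle ranges over the standard $UT_2$ basis in $n-2$ variables. This gives $c_{n-1,1}(\mathcal A_2)=n(n-1)c_{n-2}(UT_2)$. The only subtle step is the one just noted: first-family terms survive the second substitution. It is resolved by eliminating the $\lambda$'s first.
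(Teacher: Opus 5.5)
Your strategy matches the paper's: first eliminate the commutator-free coefficients, then reuse the second substitution of Proposition~\ref{sec6:prop-a2-even} extended by $z_l\mapsto e_{34}$. However, the first step fails as written.

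With $y_t\mapsto e_{12}$, $y_r\mapsto E=e_{22}+e_{33}$ and $z_l\mapsto e_{34}$, you get $e_{12}E^{n-2}e_{34}=e_{12}e_{34}=0$, not $e_{14}$. In fact every term of $f$ vanishes under this substitution. Each term contains $y_t$, any product or commutator built from $e_{12}$ and copies of $E$ lies in $Fe_{12}$, and $e_{12}e_{34}=0$. So this evaluation only gives $0=0$, and $\lambda_{t,l}=0$ is never established. Your second step explicitly relies on $\lambda_{t,l}=0$, so the argument does not close.

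The repair is to send $z_l$ to the odd element $e_{24}$ in the first step, exactly as the paper does. Then $e_{12}E^{n-2}e_{24}=e_{14}$. Words not beginning with $y_t$ vanish because $Ee_{12}=0$. Commutator terms vanish too: either all their commutator entries are $E$, or the commutator lies in $Fe_{12}$ and is preceded by some $y_{t'}\mapsto E$. You should also set $z_{l'}\mapsto 0$ for $l'\neq l$ explicitly. This is not automatic, since different terms carry different odd labels.

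Your ``delicate point'' is also misplaced. Under the second substitution the commutator-free terms do vanish. Since $j_2<j_1$, the variable $y_{j_2}\mapsto e_{33}$ precedes $y_{j_1}\mapsto e_{23}$ in the increasing word, and $e_{12}E^k e_{33}=0$. Words not starting with $y_t$ die because $E$, $e_{23}$ and $e_{33}$ all have zero first column. So a product of the shape $e_{12}E\cdots e_{23}e_{33}\cdots e_{34}$ never arises, and your second step is sound on its own. The real subtlety is in the first step: the odd value must have row index matching the column of $e_{12}E^{n-2}=e_{12}$, which forces $e_{24}$ rather than $e_{34}$.
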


\begin{proof}
Fix the odd label $l$ and set $z_r=0$ for $r\ne l$.
A linear relation among the polynomials in \ref{sec6:a2-oneodd} then has the form
$hz_l$, where $h$ is a linear combination of the polynomials in
\ref{sec6:a2-even} on the remaining $n-1$ labels.

Apply the first evaluations of Proposition~\ref{sec6:prop-a2-even} to $h$ and set
$z_l\mapsto e_{24}$. They isolate the coefficients of the
commutator-free terms, since $e_{12}e_{24}=e_{14}$.
After these coefficients vanish, use the second evaluations of
that proposition and set $z_l\mapsto e_{34}$.
Since $e_{13}e_{34}=e_{14}$, each remaining coefficient is zero.
Repeating this for every $l$ proves independence.

There are $n(n-1)$ choices of the ordered pair $(t,l)$ and the
remaining factor ranges over the standard multilinear basis of
$UT_2$ in $n-2$ variables. This gives the stated formula.
\end{proof}

The preceding propositions together with equation (\ref{eqcodimensoes}) allow us to conclude the following.

\begin{theorem}\label{sec6:thm-a2}
For the superalgebra $\mathcal A_2$, the following statements hold.
\begin{enumerate}
    \item
    The polynomials in \eqref{sec6:a2-identities} generate
    $\textnormal{Id}^{\mathbb Z_2}(\mathcal A_2)$
    as a $T_{\mathbb Z_2}$-ideal.

    \item
    For every $n\geq3$, the polynomials
    in \eqref{sec6:a2-even} and \eqref{sec6:a2-oneodd} form
    a basis of $P_n^{\mathbb Z_2}$ modulo 
$P_n^{\mathbb Z_2}\cap
     \textnormal{Id}^{\mathbb Z_2}(\mathcal A_2)$.

    \item $c_n^{\mathbb Z_2}(\mathcal A_2)
    =
    2^{n-3}n(n^2-3n-2)+2n^2$, for all $n\geq 3$.
\end{enumerate}
\end{theorem}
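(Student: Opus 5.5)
The theorem follows by assembling the two preceding propositions with the inclusion $I\subseteq\operatorname{Id}^{\mathbb Z_2}(\mathcal A_2)$ already noted. Let $I$ be the $T_{\mathbb Z_2}$-ideal generated by \eqref{sec6:a2-identities}. Since every $T_{\mathbb Z_2}$-ideal is generated by its multilinear elements in characteristic zero, items (1) and (2) reduce to showing that, for each $n$, the families \eqref{sec6:a2-even} and \eqref{sec6:a2-oneodd} span $P_n^{\mathbb Z_2}$ modulo $I$ and are independent modulo $\operatorname{Id}^{\mathbb Z_2}(\mathcal A_2)$. Together with $I\subseteq\operatorname{Id}^{\mathbb Z_2}(\mathcal A_2)$, this gives $P_n^{\mathbb Z_2}\cap I=P_n^{\mathbb Z_2}\cap\operatorname{Id}^{\mathbb Z_2}(\mathcal A_2)$ for all $n$.

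For spanning, decompose $P_n^{\mathbb Z_2}=\bigoplus_q P_{n-q,q}$. For $q\geq2$, every monomial either contains some $z$ followed immediately by a variable, or has two odd variables. In the second case the first odd variable is followed by something. Hence the monomial lies in the ideal generated by $z_1z_2$ and $z_1y_2$, so $P_{n-q,q}\subseteq I$. For $q=0$ and $q=1$, the spanning was shown before Propositions~\ref{sec6:prop-a2-even} and~\ref{sec6:prop-a2-oneodd}. For $q=1$ one first moves $z_l$ to the last position. Then the even prefix is reduced by the $UT_2$ argument: $y_t\cdot\operatorname{Id}(UT_2)\subseteq I$, and Proposition~\ref{prop:UT2-central} gives the standard basis. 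Independence inside each $P_{n-q,q}$ is exactly Propositions~\ref{sec6:prop-a2-even} and~\ref{sec6:prop-a2-oneodd}. Distinct $q$ give distinct homogeneous components, so the union is independent. Small degrees $n\le 2$ need only a direct check for item (1), and the propositions cover $n\geq2$.

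For item (3), use \eqref{eqcodimensoes}:
\[
c_n^{\mathbb Z_2}(\mathcal A_2)=n\,c_{n-1}(UT_2)+n(n-1)\,c_{n-2}(UT_2),
\]
with $c_m(UT_2)=2^{m-1}(m-2)+2$. This gives
\[
n\bigl(2^{n-2}(n-3)+2\bigr)+n(n-1)\bigl(2^{n-3}(n-4)+2\bigr)=2^{n-3}n\bigl(2(n-3)+(n-1)(n-4)\bigr)+2n+2n(n-1).
\]
This simplifies to $2^{n-3}n(n^2-3n-2)+2n^2$. The formula for $c_{n-2}(UT_2)$ requires $n-2\geq1$, hence the restriction $n\geq3$.

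The only point needing care is the $q=1$ reduction. Here one must check that moving $z_l$ to the end and then reducing the even prefix via $y_t\operatorname{Id}(UT_2)z_l\subseteq I$ stays inside $I$. This holds because $I$ is an ideal containing $y_1[y_2,y_3][y_4,y_5]$, so right multiplication by $z_l$ is harmless.
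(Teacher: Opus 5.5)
Your proposal is correct and follows the paper's own route. It combines the spanning reductions modulo $I$ (including $P_{n-q,q}\subseteq I$ for $q\geq2$) with the independence results of Propositions~\ref{sec6:prop-a2-even} and~\ref{sec6:prop-a2-oneodd} and the sum \eqref{eqcodimensoes}, and your simplification to $2^{n-3}n(n^2-3n-2)+2n^2$ is right. One small slip: Proposition~\ref{sec6:prop-a2-oneodd} is stated only for $n\geq3$, so the one-odd component in degree $2$ (spanned modulo $I$ by the words $y_tz_l$) belongs to your direct check rather than to ``the propositions cover $n\geq2$''; that check is immediate via $y_t\mapsto e_{12}$, $z_l\mapsto e_{24}$.
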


Finally, by Remark \ref{reflectionidentities}, we obtain the following result for the superalgebra $\mathcal A_5$.

\begin{theorem}\label{sec6:thm-a5}
For the superalgebra $\mathcal A_5$, the following statements hold.
\begin{enumerate}
    \item $\textnormal{Id}^{\mathbb Z_2}(\mathcal A_5)
    =
    \langle
    [y_1,y_2][y_3,y_4]y_5, z_1z_2, y_1z_2
    \rangle_{T_{\mathbb Z_2}}.$

    \item $c_n^{\mathbb Z_2}(\mathcal A_5)
    =c_n^{\mathbb Z_2}(\mathcal A_2)=
    2^{n-3}n(n^2-3n-2)+2n^2$, for all $n\geq3$.
\end{enumerate}
\end{theorem}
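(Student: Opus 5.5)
The plan is to deduce everything from Theorem~\ref{sec6:thm-a2} through Remark~\ref{reflectionidentities}. First I check that $\mathcal A_5$ is, up to graded isomorphism, the reflected superalgebra $\mathcal A_2^\rho$. The anti-automorphism $\rho$ of $UT_4$ sends $e_{ij}\mapsto e_{5-j,5-i}$. It therefore preserves $D_0$, since $e_{22}\leftrightarrow e_{33}$, $e_{12}\leftrightarrow e_{34}$, $e_{13}\leftrightarrow e_{24}$, $e_{23}\mapsto e_{23}$ and $e_{14}\mapsto e_{14}$. Applying $\rho$ to the components of $\mathcal A_2$ gives the following.
\[
\rho\bigl((\mathcal A_2)_0\bigr)=Fe_{33}+Fe_{22}+Fe_{34}+Fe_{24}+Fe_{23},\qquad
\rho\bigl((\mathcal A_2)_1\bigr)=Fe_{14}+Fe_{13}+Fe_{12}.
\]
Comparing with Table~\ref{tab:superalgebras-D0}, this is exactly the grading of $\mathcal A_5$. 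Hence $\mathcal A_5=\mathcal A_2^\rho$ as superalgebras.

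Next I apply Remark~\ref{reflectionidentities} with $B=\mathcal A_2$, whose generators are given by Theorem~\ref{sec6:thm-a2}(1). Reversing each monomial of the generators in \eqref{sec6:a2-identities} gives three polynomials. First, $(z_1z_2)^*=z_2z_1$, which is a renaming of $z_1z_2$. Second, $(z_1y_2)^*=y_2z_1$, which is a renaming of $y_1z_2$. Third, for $y_1[y_2,y_3][y_4,y_5]$, reversal satisfies $(uv)^*=v^*u^*$ and $[a,b]^*=[b^*,a^*]=-[a^*,b^*]$. So $(y_1[y_2,y_3][y_4,y_5])^*=[y_5,y_4][y_3,y_2]y_1=[y_4,y_5][y_2,y_3]y_1$, which after renaming variables is $[y_1,y_2][y_3,y_4]y_5$. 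Renamings do not change the generated $T_{\mathbb Z_2}$-ideal, so item (1) follows.

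For item (2), Remark~\ref{reflectionidentities} gives $c_n^{\mathbb Z_2}(\mathcal A_5)=c_n^{\mathbb Z_2}(\mathcal A_2)$ directly. The formula then comes from Theorem~\ref{sec6:thm-a2}(3). As a consistency check, that formula should equal the sum of Propositions~\ref{sec6:prop-a2-even} and~\ref{sec6:prop-a2-oneodd}, namely $n c_{n-1}(UT_2)+n(n-1)c_{n-2}(UT_2)$ with $c_m(UT_2)=2^{m-1}(m-2)+2$. Expanding gives $2^{n-3}\bigl(2n(n-3)+n(n-1)(n-4)\bigr)+2n+2n(n-1)=2^{n-3}n(n^2-3n-2)+2n^2$, which matches. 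There is no real obstacle. The only care needed is to verify the parity match in the first step and the sign bookkeeping under reversal, and signs are harmless because a $T$-ideal is closed under scalars.
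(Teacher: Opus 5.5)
Your proposal is correct and takes the same route as the paper, which obtains this theorem directly from Remark~\ref{reflectionidentities} applied to Theorem~\ref{sec6:thm-a2}. You also spell out what the paper leaves implicit, and both checks hold: $\rho$ carries the grading of $\mathcal A_2$ exactly onto that of $\mathcal A_5$, and reversal turns the generators of $\mathcal A_2$ into $[y_1,y_2][y_3,y_4]y_5$, $z_1z_2$ and $y_1z_2$ up to renaming and sign.
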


\section{Graded identities of $\mathcal A_3$}

In this section, we determine generators for the graded identities of $\mathcal A_3$, describe multilinear bases for its relatively free graded algebra and compute its graded codimensions.

Recall that
\[
\begin{aligned}
(\mathcal A_3)_{0}
=F e_{22}+F e_{33}+F e_{12}+F e_{34}\quad \mbox{ and }\quad 
(\mathcal A_3)_{1}
=F e_{13}+F e_{14}+F e_{23}+F e_{24}.
\end{aligned}
\]
Thus, the following polynomials are graded identities of $\mathcal A_3$:
\begin{equation}\label{sec7:a3-identities}
\begin{gathered}
y_1[y_2,y_3]y_4,\quad
[y_1,y_2][y_3,y_4],\quad
\operatorname{St}_3(y_1,y_2,y_3),\quad 
z_1z_2,\quad
z_1[y_2,y_3]y_4,\quad
y_1[y_2,y_3]z_4.
\end{gathered}
\end{equation}

Let $I$ be the $T_{\mathbb Z_2}$-ideal generated by the polynomials
in \eqref{sec7:a3-identities}. Then
$I\subseteq\textnormal{Id}^{\mathbb Z_2}(\mathcal A_3).$ Every monomial with at least two odd variables contains a factor
$z_aUz_b$, with $U$ an even word and therefore belongs to $I$
since $z_1z_2\in I$. Consequently,
$P_{n-q,q}\subseteq I$ for all $2\leq q\leq n,$ and it suffices to consider the cases $q\in\{0,1\}$.

\begin{itemize}
    \item {\bf Case $q=0$.} \end{itemize} 

We first prove that, for $n\geq3$, any polynomial in $P_{n,0}$ can be written, modulo
$I$, as a linear combination of the monomials
\begin{equation}\label{sec7:a3-even}
y_t y_{i_1}\cdots y_{i_{n-1}},\qquad
y_{i_1}\cdots y_{i_{n-1}}y_t,
\end{equation}
where $i_1<\cdots<i_{n-1}$ and
$\{t,i_1,\ldots,i_{n-1}\}=\{1,\ldots,n\}$.
Above, the common monomial $y_1\cdots y_n$ is retained only once.

 For $n=3$, this follows from
$\operatorname{St}_3(y_1,y_2,y_3)\in I$.

Assume now that $n\geq4$. The identity $y_1[y_2,y_3]y_4$
allows us to order the internal variables of a monomial,
keeping its first and last variables fixed. Hence it suffices
to consider
\[
m=y_p y_{i_1}\cdots y_{i_{n-2}}y_q,
\qquad i_1<\cdots<i_{n-2}.
\]
For this fixed $n$, we argue by induction on $p$.
If $p<i_1$, the first $n-1$ variables are already increasingly
ordered, so $m$ belongs to \eqref{sec7:a3-even}. In particular,
this deals with $p=1$.

Suppose that $p>i_1$. Then
\[
m=y_{i_1}y_p y_{i_2}\cdots y_{i_{n-2}}y_q
  +[y_p,y_{i_1}]y_{i_2}\cdots y_{i_{n-2}}y_q.
\]
The variables following the commutator in the second term can be increasingly ordered modulo $I$ since $[y_1,y_2][y_3,y_4]\in I$.

Let $j_3<\cdots<j_n$ be the increasing arrangement of the labels
$i_2,\ldots,i_{n-2},q$. Expanding the commutator after this
reordering gives
\[
\begin{aligned}
m\equiv y_{i_1}y_p y_{i_2}\cdots y_{i_{n-2}}y_q
+y_p y_{i_1}y_{j_3}\cdots y_{j_n}
-y_{i_1}y_p y_{j_3}\cdots y_{j_n}
\pmod I.
\end{aligned}
\]

The first and third terms are covered by the induction hypothesis,
after ordering their internal variables. The second belongs to \eqref{sec7:a3-even} if $i_1<j_3$. Otherwise,
$j_3=q<i_1<j_4<\cdots<j_n$ and $y_1[y_2,y_3]y_4\in I$
allows us to interchange $y_{i_1}$ and $y_q$, since
$y_{j_4}\cdots y_{j_n}$ is nonempty. The resulting word also
belongs to \eqref{sec7:a3-even}, completing the induction.

\begin{proposition}\label{sec7:prop-a3-even}
For every $n\geq3$, the monomials in \eqref{sec7:a3-even} are linearly independent modulo
$\textnormal{Id}^{\mathbb Z_2}(\mathcal A_3)$. Moreover, $c_{n,0}(\mathcal A_3)=2n-1,$ for all $n\geq3.$
\end{proposition}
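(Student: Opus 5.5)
We argue by evaluations in $\mathcal A_3$, where all even variables are sent into $(\mathcal A_3)_0=Fe_{22}+Fe_{33}+Fe_{12}+Fe_{34}$. First count: the family \eqref{sec7:a3-even} consists of the $n$ monomials $y_ty_{i_1}\cdots y_{i_{n-1}}$ and the $n$ monomials $y_{i_1}\cdots y_{i_{n-1}}y_t$. The monomial $y_1\cdots y_n$ appears in both subfamilies (for $t=1$ in the first and $t=n$ in the second) and is counted once, so there are $2n-1$ distinct monomials. Since they span $P_{n,0}$ modulo $I\subseteq\textnormal{Id}^{\mathbb Z_2}(\mathcal A_3)$, linear independence gives $c_{n,0}(\mathcal A_3)=2n-1$.

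For independence, suppose
\[
f=\sum_{t=1}^{n}\lambda_t\,y_ty_{i_1}\cdots y_{i_{n-1}}+\sum_{t=1}^{n-1}\mu_t\,y_{i_1}\cdots y_{i_{n-1}}y_t
\]
is a graded identity. Here the index $t=n$ is absorbed into the first sum as $y_1\cdots y_n$.

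\textbf{Left coefficients, $t\neq1$.} Fix $t$ and evaluate $y_t\mapsto e_{12}$ and $y_r\mapsto e_{22}$ for $r\neq t$. A monomial is nonzero only if $y_t$ occurs first, since $e_{22}e_{12}=0$, and then its value is $e_{12}$. Among the listed monomials, those beginning with $y_t$ are:
\begin{itemize}
\item[] $y_ty_{i_1}\cdots y_{i_{n-1}}$;
\item[] possibly $y_1\cdots y_n$ when $t=1$;
\item[] possibly the right-type monomial $y_{i_1}\cdots y_{i_{n-1}}y_s$ whose increasing block begins with $t$, which forces $t=1$ when $s\ne1$.
\end{itemize}
So for $t\neq1$ only $y_ty_{i_1}\cdots$ survives, and $\lambda_t=0$.

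\textbf{Right coefficients, $t\neq n$.} Symmetrically, fix $t$ and evaluate $y_t\mapsto e_{34}$ and $y_r\mapsto e_{33}$ for $r\neq t$. Only monomials ending in $y_t$ survive. For $t\neq n$, and $t\ne$ the last index of some left-type monomial, the only such monomial is the right-type one, so $\mu_t=0$. Precisely, a left-type monomial $y_sy_{i_1}\cdots y_{i_{n-1}}$ ends in $y_n$ unless $s=n$, and that exceptional case is $y_ny_1\cdots y_{n-1}$, which ends in $y_{n-1}$. The index $t=n-1$ therefore needs care: the evaluation gives $\mu_{n-1}+\lambda_n=0$, and $\lambda_n=0$ already from the previous step, so $\mu_{n-1}=0$. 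Similarly, for $t=1$ on the left side the right-type monomial $y_2\cdots y_ny_1$ begins with $y_2$, so it does not interfere.

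\textbf{The remaining coefficient.} After these steps only the coefficient of $y_1\cdots y_n$ is left. It is then killed by evaluating every variable at $e_{22}$, which gives the nonzero value $e_{22}$.

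The main obstacle is bookkeeping the overlaps between the two subfamilies, namely which monomials share a first or last variable. We handle this by choosing the order in which coefficients are eliminated: all left-type coefficients with $t\neq1$ first, then the right-type ones, and finally $y_1\cdots y_n$.
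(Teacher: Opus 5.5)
Your strategy is the same as the paper's: separating evaluations $y_t\mapsto e_{12}$ with all other variables $\mapsto e_{22}$, and $y_t\mapsto e_{34}$ with all other variables $\mapsto e_{33}$. However, your left-coefficient step is wrong at $t=2$. The right-type word with $t=1$ is $y_2y_3\cdots y_ny_1$, and it begins with $y_2$. Under $y_2\mapsto e_{12}$, others $\mapsto e_{22}$, it evaluates to $e_{12}$. So this evaluation gives only $\lambda_2+\mu_1=0$, not $\lambda_2=0$. Your list of words beginning with $y_t$ even flags that the case $s=1$ is special, but you then drop it. Your later remark that $y_2\cdots y_ny_1$ ``does not interfere'' for $t=1$ on the left misplaces the overlap, since it interferes at $t=2$. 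Consequently the elimination order you announce (all left-type coefficients with $t\neq1$ first, then the right-type ones) does not work as stated.

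The repair is one line and matches how the paper orders the argument. The right evaluation at $t=1$ ($y_1\mapsto e_{34}$, others $\mapsto e_{33}$) involves no $\lambda$, because for $n\geq3$ every left-type word ends in $y_n$ or $y_{n-1}$, never in $y_1$. Hence $\mu_1=0$, and then $\lambda_2=0$. The paper eliminates this coefficient (its $\alpha_1$) before anything else for exactly this reason.

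The rest of your argument is sound. The claim $\lambda_t=0$ for $t\geq3$ is correct; in particular $\lambda_n=0$, so $\mu_{n-1}+\lambda_n=0$ yields $\mu_{n-1}=0$. The final evaluation of every variable at $e_{22}$ correctly kills the coefficient of $y_1\cdots y_n$; the paper uses $y_n\mapsto e_{34}$ there instead, which is equally valid. Your count of $2n-1$ is correct.
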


\begin{proof}
For each $t$, let $i_1<\cdots<i_{n-1}$ be the remaining labels, set
$L_t=y_ty_{i_1}\cdots y_{i_{n-1}}$ and $
R_t=y_{i_1}\cdots y_{i_{n-1}}y_t.$ Suppose that
\[
f=\sum_{t=1}^n\alpha_tR_t+\sum_{t=2}^n\beta_tL_t
\in\operatorname{Id}^{\mathbb Z_2}(\mathcal A_3).
\]
Evaluating $y_1\mapsto e_{34}$ and all other variables at $e_{33}$
leaves only $R_1$, so $\alpha_1=0$.

For $t\geq2$, evaluate $y_t\mapsto e_{12}$ and all other variables
at $e_{22}$. A nonzero word must begin with $y_t$. Among the $R_s$,
only $R_1$ does not begin with $y_1$ and its coefficient is already
zero. Hence $f\mapsto\beta_te_{12}=0$.

Finally, with all $\beta_t=0$, evaluating $y_s\mapsto e_{34}$ and
all other variables at $e_{33}$ gives
$f\mapsto\alpha_se_{34}=0$. Thus all coefficients vanish.
The two families, with their common word counted once, contain
$2n-1$ monomials.
\end{proof}

\begin{itemize}
    \item {\bf Case $q=1$.} \end{itemize} 

Write a monomial as $Uz_kV$, where $U$ and $V$ are possibly empty
even words. The identities $y_1[y_2,y_3]z_4$ and $z_1[y_2,y_3]y_4$
permit ordering the variables of $U$ except its first one and those
of $V$ except its last one, respectively. The surrounding factors
are absorbed into the graded substitutions or used as outside
multipliers. Therefore, for $n\geq3$, $P_{n-1,1}$ is spanned modulo $I$ by
\begin{equation}\label{eq:A3-one-odd-normal}
\begin{gathered}
y_ty_{i_1}\cdots y_{i_{n-2}}z_k,\qquad
z_ky_{i_1}\cdots y_{i_{n-2}}y_t,\qquad
y_ry_{i_1}\cdots y_{i_s}z_ky_{j_1}\cdots y_{j_{n-s-3}}y_t.
\end{gathered}
\end{equation}
The indices in each displayed internal even block are increasingly
ordered. In the third family, $0\leq s\leq n-3$.
These families are disjoint, according as the odd variable occurs
last, first, or internally.

\begin{proposition}\label{sec7:prop-a3-oneodd}
For every $n\geq3$, the monomials in \eqref{eq:A3-one-odd-normal}
are linearly independent modulo
$\operatorname{Id}^{\mathbb Z_2}(\mathcal A_3)$. Moreover,
$c_{n-1,1}(\mathcal A_3)
=2n(n-1)+n(n-1)(n-2)2^{n-3}$, for all $n\geq3.$
\end{proposition}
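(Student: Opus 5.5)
Since the odd variable $z_k$ is determined by the multihomogeneous degree of a multilinear monomial, a linear relation among the monomials in \eqref{eq:A3-one-odd-normal} splits into separate relations, one for each fixed odd label $k$. So I fix $k$ and use only graded substitutions with $z_k\mapsto$ an odd matrix unit ($e_{13},e_{14},e_{23},e_{24}$) and every $y_r\mapsto$ an even element of $Fe_{22}+Fe_{33}+Fe_{12}+Fe_{34}$. In every case I evaluate at matrix units and trace the only possible nonzero path through the indices $1\to2\to3\to4$, exactly as in Proposition~\ref{sec7:prop-a3-even}. Suppose
\[
f=\sum_t\alpha_t\,y_ty_{i_1}\cdots y_{i_{n-2}}z_k+\sum_t\beta_t\,z_ky_{i_1}\cdots y_{i_{n-2}}y_t+\sum\gamma_{(r,t,S)}\,y_r\,(\text{block }S)\,z_k\,(\text{block }S')\,y_t
\]
is a graded identity. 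Here $S\sqcup S'$ is the set of the remaining $n-3$ labels and each block is increasingly ordered.

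\emph{First family.} Fix $t$ and set $y_t\mapsto e_{12}$, all other $y$'s $\mapsto e_{22}$, $z_k\mapsto e_{24}$. A nonzero word must start with $y_t$ and end with $z_k$, because $e_{24}y=0$ for $y\in\{e_{12},e_{22}\}$. The second and third families therefore vanish, and the first family contributes only $\alpha_te_{14}$. Hence $\alpha_t=0$.

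\emph{Second family.} By the symmetric argument, set $z_k\mapsto e_{13}$, $y_t\mapsto e_{34}$ and all other $y$'s $\mapsto e_{33}$. A nonzero word must begin with $z_k$ and end with $y_t$, so $\beta_t=0$.

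\emph{Third family.} Fix $(r,t,S)$ and set
\[
y_r\mapsto e_{12},\quad y_s\mapsto e_{22}\ (s\in S),\quad z_k\mapsto e_{23},\quad y_s\mapsto e_{33}\ (s\in S'),\quad y_t\mapsto e_{34}.
\]
The chosen monomial evaluates to $e_{14}$. For any word with nonzero value, the factor $e_{12}$ must come first and $e_{34}$ last. The $e_{22}$'s must lie between $e_{12}$ and $e_{23}$, and the $e_{33}$'s between $e_{23}$ and $e_{34}$. A monomial of the first family ends in $z_k$ while containing $y_t$ before it, so it vanishes; similarly every monomial of the second family vanishes. Within the third family, the first variable, the last variable and the sets of variables on each side of $z_k$ are thus forced, and the increasing order of the blocks makes the monomial unique. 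Therefore $\gamma_{(r,t,S)}=0$, which proves linear independence.

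Together with the spanning statement proved before the proposition, this gives a basis, and $c_{n-1,1}(\mathcal A_3)$ equals its cardinality. The count is as follows. The first and second families each contribute $n(n-1)$, from the choice of the ordered pair $(k,t)$. The third family contributes $n(n-1)(n-2)$ choices of the distinct labels $k,r,t$, times $2^{n-3}$ choices of how the remaining $n-3$ labels split into the two ordered blocks. The total is $2n(n-1)+n(n-1)(n-2)2^{n-3}$.

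The only delicate point is verifying that in the third evaluation no monomial of the first two families survives. This holds because the single $e_{34}$ and the single $e_{12}$ must be the last and first factors, respectively; I expect this to be routine.
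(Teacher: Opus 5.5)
Your proof is correct and follows essentially the same argument as the paper. You fix the odd label $k$ and use the same three separating evaluations: $y_t\mapsto e_{12}$, $z_k\mapsto e_{24}$, rest $\mapsto e_{22}$; then $z_k\mapsto e_{13}$, $y_t\mapsto e_{34}$, rest $\mapsto e_{33}$; then $e_{12},e_{22},e_{23},e_{33},e_{34}$ for the third family. You then obtain the codimension from the same count combined with the preceding spanning reduction.
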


\begin{proof}
Suppose that a linear combination of the displayed monomials is a
graded identity. Fix the odd label $k$ and set every variable not
explicitly assigned a value below to zero.

For a monomial of the first family, evaluate
\[
y_t\mapsto e_{12},\qquad y_{i_a}\mapsto e_{22}\ (1\leq a\leq n-2),
\qquad z_k\mapsto e_{24}.
\]
A nonzero word must begin with $y_t$ and end with $z_k$.
Its ordered even block determines the selected monomial uniquely,
whose value is $e_{14}$. Thus its coefficient is zero.
For the second family, the analogous evaluation
\[
z_k\mapsto e_{13},\qquad y_{i_a}\mapsto e_{33}\ (1\leq a\leq n-2),
\qquad y_t\mapsto e_{34}
\]
forces the first and last variables to be $z_k$ and $y_t$, respectively,
and gives the same conclusion.

For a monomial of the third family, evaluate
\[
\begin{gathered}
y_r\mapsto e_{12},\qquad z_k\mapsto e_{23},\qquad y_t\mapsto e_{34},\\
y_{i_a}\mapsto e_{22}\ (1\leq a\leq s),\qquad
y_{j_b}\mapsto e_{33}\ (1\leq b\leq n-s-3).
\end{gathered}
\]
A nonzero value requires $y_r$ first and $y_t$ last, all variables
evaluated at $e_{22}$ before $z_k$ and all those evaluated at $e_{33}$
after it. These conditions and the ordering within each block select
precisely the chosen monomial, with value $e_{14}$. Its coefficient is
therefore zero. This also covers empty internal blocks and proves
independence.

Each of the first two families contains $n(n-1)$ monomials.
For the third, there are $n(n-1)(n-2)$ choices of $(r,k,t)$ and
each remaining label can be placed before or after $z_k$, with its
position within that block fixed by the increasing order. This gives
$n(n-1)(n-2)2^{n-3}$ monomials and proves the formula.

\end{proof}

The preceding results yield the following theorem.

\begin{theorem}\label{sec7:thm-a3}
For the superalgebra $\mathcal A_3$, the following statements hold.
\begin{enumerate}
\item
The polynomials in \eqref{sec7:a3-identities} generate
$\operatorname{Id}^{\mathbb Z_2}(\mathcal A_3)$ as a
$T_{\mathbb Z_2}$-ideal.
\item
For every $n\geq3$, the monomials in \eqref{sec7:a3-even} and
\eqref{eq:A3-one-odd-normal} form a basis of
$P_n^{\mathbb Z_2}$ modulo
$P_n^{\mathbb Z_2}\cap\operatorname{Id}^{\mathbb Z_2}(\mathcal A_3)$.
\item $c_n^{\mathbb Z_2}(\mathcal A_3)
=2n-1+n(n-1)\bigl(2+(n-2)2^{n-3}\bigr),$ for all $n\geq 3$.
\end{enumerate}
\end{theorem}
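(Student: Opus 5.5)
The plan is to assemble Theorem~\ref{sec7:thm-a3} from the reductions and propositions already established in this section, exactly as was done for $\mathcal A_2$. Let $I$ be the $T_{\mathbb Z_2}$-ideal generated by the polynomials in \eqref{sec7:a3-identities}. The first step is to confirm $I\subseteq\operatorname{Id}^{\mathbb Z_2}(\mathcal A_3)$ by direct inspection of the homogeneous components. The even part $Fe_{22}+Fe_{33}+Fe_{12}+Fe_{34}$ has commutators lying in $Fe_{12}+Fe_{34}$. Therefore $y[y,y]y$ vanishes, since $e_{12}$ cannot be multiplied by an even element on the left and $e_{34}$ cannot be multiplied by one on the right. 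The product $[y,y][y,y]$ lies in $(Fe_{12}+Fe_{34})^2=0$. The identity $\operatorname{St}_3$ on the even part I would check on basis elements; the even part is isomorphic to a subalgebra of $UT_2\oplus UT_2$-type blocks, and at most one radical unit per block can contribute. For the odd part, $z_1z_2=0$ because odd units go from $\{1,2\}$ to $\{3,4\}$. Finally, $z[y,y]y$ and $y[y,y]z$ vanish for the same index reasons.

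The second step is the spanning statement. The components $P_{n-q,q}$ with $q\ge2$ lie in $I$, as noted above. For $q=0$, the induction on $p$ given before Proposition~\ref{sec7:prop-a3-even} shows that \eqref{sec7:a3-even} spans $P_{n,0}$ modulo $I$. For $q=1$, the ordering argument using $y_1[y_2,y_3]z_4$ and $z_1[y_2,y_3]y_4$ shows that \eqref{eq:A3-one-odd-normal} spans $P_{n-1,1}$. One point needs care here: in the third family only the first letter of $U$ and the last letter of $V$ are fixed, and the case of an empty $U$ or $V$ lands in the first or second family.

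The third step combines spanning with linear independence. Propositions~\ref{sec7:prop-a3-even} and~\ref{sec7:prop-a3-oneodd} show that these spanning sets are linearly independent modulo $\operatorname{Id}^{\mathbb Z_2}(\mathcal A_3)$. Hence
\[
P_n^{\mathbb Z_2}\cap I=P_n^{\mathbb Z_2}\cap\operatorname{Id}^{\mathbb Z_2}(\mathcal A_3)\qquad (n\ge3),
\]
which is item (2). For item (1), the same equality must also hold for $n\le2$. In degree $1$ both sides are zero. In degree $2$ the nonzero multilinear identities are exactly those of the form $z_1z_2$, together with nothing even or mixed: one checks that $y_1y_2,y_2y_1$ are independent (using $e_{12},e_{22}$ and $e_{22},e_{12}$), and similarly $y_1z_2,z_2y_1$. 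Since $\operatorname{char}F=0$, both $T_{\mathbb Z_2}$-ideals are generated by their multilinear parts, so they coincide. Item (3) then follows from \eqref{eqcodimensoes}: the value is $c_{n,0}+c_{n-1,1}=2n-1+2n(n-1)+n(n-1)(n-2)2^{n-3}$.

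I expect the main obstacle to be the rigor of the $q=1$ spanning reduction. Specifically, one must justify that $y_1[y_2,y_3]z_4$ lets us sort $U$ after its first letter even when $V$ follows $z_k$. This works by substituting $z_4\mapsto z_kV$, which is allowed because it is an odd element. Symmetrically, $z_1[y_2,y_3]y_4$ handles $V$ via $z_1\mapsto Uz_k$. I would spell these substitutions out explicitly.
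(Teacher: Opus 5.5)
Your proposal is correct and follows the paper's route. The paper obtains the theorem by combining the spanning reductions for $q=0$ and $q=1$ (with $P_{n-q,q}\subseteq I$ for $q\geq 2$) with the independence results of Propositions~\ref{sec7:prop-a3-even} and~\ref{sec7:prop-a3-oneodd}, then summing via \eqref{eqcodimensoes}. Your explicit verification of $I\subseteq\operatorname{Id}^{\mathbb Z_2}(\mathcal A_3)$ and of degrees $n\leq 2$ fills in details the paper leaves implicit; the latter is needed for item (1), since the propositions only start at $n=3$.
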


\section{Graded identities of $\mathcal A_4$ and $\mathcal A_7$}

In this section, we determine generators for the graded identities of
$\mathcal A_4$ and $\mathcal A_7$, describe multilinear bases for their
relatively free graded algebras and compute their graded codimensions.

We begin with $\mathcal A_4$. Recall that
\[
\begin{aligned}
(\mathcal A_4)_{0}
=F e_{22}+F e_{33}+F e_{12}+F e_{14}+F e_{24}\quad \mbox{ and }\quad 
(\mathcal A_4)_{1}
=F e_{13}+F e_{23}+F e_{34}.
\end{aligned}
\]
The polynomials
\begin{equation}\label{sec8:a4-identities}
y_1[y_2,y_3]y_4,\quad
z_1z_2z_3,\quad
z_1[y_2,y_3],\quad
z_1z_2y_3,\quad
y_1[y_2,y_3]z_4
\end{equation}
are graded identities of $\mathcal A_4$. 

Let $I$ be the $T_{\mathbb Z_2}$-ideal generated by the polynomials
in \eqref{sec8:a4-identities}. Then
$I\subseteq\textnormal{Id}^{\mathbb Z_2}(\mathcal A_4).$ Thus $P_{n-q,q}\subseteq I$ for all $3\leq q\leq n$ and it suffices to consider the cases $q\in\{0,1,2\}$.

\begin{itemize}
    \item \textbf{Case} $q=0$.
\end{itemize}

Since $y_1[y_2,y_3]y_4\in I$, every polynomial in $P_{n,0}$
is congruent modulo $I$ to a linear combination of the monomials
\begin{equation}\label{sec8:a4-even}
y_t y_{i_1}\cdots y_{i_{n-2}}y_p,
\end{equation}
where $i_1<\cdots<i_{n-2}$ and
$\{t,i_1,\ldots,i_{n-2},p\}=\{1,\ldots,n\}$.

\begin{proposition}\label{sec8:prop-a4-even}
For every $n\geq 4$, the monomials in \eqref{sec8:a4-even} are linearly
independent modulo $\textnormal{Id}^{\mathbb Z_2}(\mathcal A_4)$. Moreover, $c_{n,0}(\mathcal A_4)=n(n-1),$ for all $n\geq 4$.
\end{proposition}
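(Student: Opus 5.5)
We must show that the monomials $y_t y_{i_1}\cdots y_{i_{n-2}}y_p$ in \eqref{sec8:a4-even} are linearly independent modulo $\textnormal{Id}^{\mathbb Z_2}(\mathcal A_4)$. Such a monomial is determined by the ordered pair $(t,p)$ with $t\neq p$, since the internal block is increasing. Hence there are $n(n-1)$ of them, which gives the count once independence is proved.

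Suppose that
\[
f=\sum_{t\neq p}\alpha_{t,p}\,y_t y_{i_1}\cdots y_{i_{n-2}}y_p
\]
is a graded identity of $\mathcal A_4$. Fix an ordered pair $(t,p)$ and use the evaluation
\[
y_t\mapsto e_{12},\qquad y_p\mapsto e_{24},\qquad y_r\mapsto e_{22}\quad (r\ne t,p).
\]
All of these are even in $\mathcal A_4$: $e_{12},e_{24}\in(\mathcal A_4)_0$ and $e_{22}$ is even.

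We then trace the nonzero products. Since $e_{22}e_{12}=0$, $e_{24}e_{12}=0$ and $e_{24}e_{22}=0$, a monomial has nonzero value only if $y_t$ is its first letter. Because $e_{24}$ annihilates everything from the right except on the left of nothing else, $y_p$ must be its last letter. In that case the product is $e_{12}e_{22}^{\,n-2}e_{24}=e_{14}$.

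The monomial $y_t y_{i_1}\cdots y_{i_{n-2}}y_p$ is the unique element of \eqref{sec8:a4-even} with first letter $y_t$ and last letter $y_p$. Therefore $f\mapsto \alpha_{t,p}e_{14}$, which forces $\alpha_{t,p}=0$. Since $(t,p)$ was arbitrary, all coefficients vanish.

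The hypothesis $n\geq4$ is not essential for the evaluation itself; it only matches the spanning argument, which uses $y_1[y_2,y_3]y_4$. I see no real obstacle. The only point to verify is that the chosen matrix units lie in the even component of $\mathcal A_4$, and $e_{12}$, $e_{22}$, $e_{24}$ all do.
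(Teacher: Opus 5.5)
Your proposal is correct and follows the paper's proof: you use the same evaluation $y_t\mapsto e_{12}$, $y_p\mapsto e_{24}$ with all other variables sent to $e_{22}$. It forces $y_t$ first and $y_p$ last, so each coefficient is isolated with value $e_{14}$, and counting the $n(n-1)$ ordered pairs $(t,p)$ gives $c_{n,0}(\mathcal A_4)$.
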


\begin{proof}
For each ordered pair $(t,p)$ of distinct labels, let
$i_1<\cdots<i_{n-2}$ be the remaining labels. Suppose that
\[
f=\sum_{\substack{1\leq t,p\leq n\\t\ne p}}
\alpha_{t,p}y_ty_{i_1}\cdots y_{i_{n-2}}y_p
\in\textnormal{Id}^{\mathbb Z_2}(\mathcal A_4).
\]
Fix $(t,p)$ and evaluate $y_t\mapsto e_{12}$, $y_p\mapsto e_{24}$
and all remaining variables at $e_{22}$. A nonzero word must begin
with $y_t$ and end with $y_p$. Its ordered internal block therefore
selects the chosen monomial, whose value is $e_{14}$. Hence
$\alpha_{t,p}=0$ for every $(t,p)$, proving independence.
There are $n(n-1)$ choices of this ordered pair, which gives the formula.
\end{proof}

\begin{itemize}
    \item \textbf{Case} $q=1$.
\end{itemize}

Since $z_1[y_2,y_3]$ and $y_1[y_2,y_3]z_4$ belong to $I$,
every polynomial in $P_{n-1,1}$, with $n\geq2$, is congruent
modulo $I$ to a linear combination of the monomials
\begin{equation}\label{sec8:a4-oneodd}
z_k y_{i_1}\cdots y_{i_{n-1}},
\qquad
y_t y_{i_1}\cdots y_{i_s}z_k
y_{j_1}\cdots y_{j_{n-s-2}},
\qquad 0\leq s\leq n-2.
\end{equation}
The indices within each displayed even block are increasingly ordered. In both families, all indices are distinct and together form $\{1,\ldots,n\}$. The two internal even blocks in the second family may be empty. These families are disjoint, according as the first variable is odd or even.

\begin{proposition}\label{sec8:prop-a4-oneodd}
For every $n\geq3$, the monomials in \eqref{sec8:a4-oneodd}
are linearly independent modulo
$\textnormal{Id}^{\mathbb Z_2}(\mathcal A_4)$.
Moreover, $c_{n-1,1}(\mathcal A_4)=n+n(n-1)2^{n-2}$, for all $n\geq3$. 
\end{proposition}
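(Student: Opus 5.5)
Following the pattern of Propositions~\ref{sec8:prop-a4-even} and~\ref{sec7:prop-a3-oneodd}, I will assign to each monomial in \eqref{sec8:a4-oneodd} a graded substitution of matrix units of $\mathcal A_4$. Under it the chosen monomial evaluates to a nonzero matrix unit, and every other monomial of the family vanishes. Suppose a linear combination $f$ of these monomials is a graded identity. Since $f$ is multilinear, I may fix the odd label $k$ and set the other odd variables to zero. The monomials with a given $k$ are then treated separately.

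For the first family, $z_ky_{i_1}\cdots y_{i_{n-1}}$, I take $z_k\mapsto e_{13}$ and all even variables $\mapsto e_{33}$. The only nonzero product must start with $e_{13}$, so $z_k$ is first, and $e_{13}e_{33}\cdots e_{33}=e_{13}\neq0$. Monomials of the second family begin with an even variable evaluated at $e_{33}$. Since $e_{33}$ is preceded by nothing and $e_{33}e_{33}\cdots e_{13}=0$ (because $e_{33}e_{13}=0$), these vanish. Within the first family the block is increasingly ordered, so exactly one monomial survives. Hence its coefficient is zero.

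For the second family, take $m=y_ty_{i_1}\cdots y_{i_s}z_ky_{j_1}\cdots y_{j_{n-s-2}}$. I evaluate $y_t\mapsto e_{12}$, $y_{i_a}\mapsto e_{22}$, $z_k\mapsto e_{23}$ and $y_{j_b}\mapsto e_{33}$, all of the correct parity in $\mathcal A_4$. Then $m\mapsto e_{12}e_{22}^{s}e_{23}e_{33}^{\,n-s-2}=e_{13}$. Now consider any monomial with a nonzero value. The matrix unit $e_{12}$ must come first, since no unit has column index $1$. After it, the row index stays at $2$ as long as the values are $e_{22}$, and $z_k$ moves it to $3$. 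After that only $e_{33}$ may follow. So the word must be $y_t$, then the $e_{22}$-variables, then $z_k$, then the $e_{33}$-variables. The monomials of the first family vanish because they start with $z_k\mapsto e_{23}$, which must be preceded by $e_{12}$. The increasing order within the blocks pins down $m$ uniquely. Thus every coefficient vanishes, and the monomials are linearly independent.

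For the count, the first family contributes one monomial per choice of $k$, giving $n$. For the second family, I choose the ordered pair $(t,k)$ in $n(n-1)$ ways, and each of the remaining $n-2$ labels goes either before or after $z_k$, giving $2^{n-2}$ placements. Therefore $c_{n-1,1}(\mathcal A_4)=n+n(n-1)2^{n-2}$, since these monomials also span $P_{n-1,1}$ modulo $I\subseteq\operatorname{Id}^{\mathbb Z_2}(\mathcal A_4)$ by the reduction preceding the statement. I expect the only delicate step to be confirming that no other word is compatible with the matrix-unit chain, and that is settled by the path argument above.
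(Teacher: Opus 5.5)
Your proposal is correct and follows the paper's proof essentially verbatim. You fix the odd label, use the same separating substitutions ($z_k\mapsto e_{13}$ with all even variables at $e_{33}$ for the first family; $y_t\mapsto e_{12}$, $y_{i_a}\mapsto e_{22}$, $z_k\mapsto e_{23}$, $y_{j_b}\mapsto e_{33}$ for the second), and obtain the count $n+n(n-1)2^{n-2}$ in the same way, with your matrix-unit path argument making the uniqueness step slightly more explicit than the paper does.
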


\begin{proof}
Suppose that a linear combination of the displayed monomials
is a graded identity. Fix the odd label $k$. In each evaluation
below, all variables not explicitly assigned a value are sent
to zero.

For a monomial of the first family, evaluate $z_k\mapsto e_{13}$
and all its even variables at $e_{33}$. A nonzero word must
begin with $z_k$, since $e_{33}e_{13}=0$. The increasing order
of the remaining labels determines the chosen monomial
uniquely, with value $e_{13}$. Hence its coefficient is zero.

For a monomial of the second family, evaluate
\[
y_t\mapsto e_{12},\qquad
z_k\mapsto e_{23},\qquad
y_{i_a}\mapsto e_{22}\quad(1\leq a\leq s),\qquad
y_{j_b}\mapsto e_{33}\quad(1\leq b\leq n-s-2).
\]
A nonzero word must begin with $y_t$, with all variables
evaluated at $e_{22}$ before $z_k$ and all those evaluated
at $e_{33}$ after it. These conditions and the increasing
order within each block determine the chosen monomial
uniquely, with value $e_{13}$. Thus its coefficient is zero.
Varying the monomial and the odd label proves independence.

The first family contains $n$ monomials. For the second,
there are $n(n-1)$ choices of $(t,k)$, and each of the
remaining $n-2$ labels can occur before or after $z_k$,
with its position within that block fixed by the increasing
order. Hence the second family contains $n(n-1)2^{n-2}$
monomials, proving the formula.
\end{proof}

\begin{itemize}
    \item \textbf{Case} $q=2$.
\end{itemize}

Since $z_1[y_2,y_3]$, $z_1z_2y_3$, $z_1y_2z_3y_4$
and $y_1[y_2,y_3]z_4$ belong to $I$, every polynomial
in $P_{n-2,2}$, with $n\geq2$, is congruent modulo $I$
to a linear combination of the monomials
\begin{equation}\label{sec8:a4-twoodd}
z_j y_{i_1}\cdots y_{i_{n-2}}z_k,
\qquad
y_t y_{i_1}\cdots y_{i_s}z_j
y_{j_1}\cdots y_{j_{n-s-3}}z_k,
\qquad 0\leq s\leq n-3.
\end{equation}
The indices within each displayed even block are increasingly
ordered. In both families, all indices are distinct and together
form $\{1,\ldots,n\}$. The second family occurs only for $n\geq3$,
and its two internal even blocks may be empty. These families
are disjoint, according as the first variable is odd or even.
Both orders of the distinct odd labels $j,k$ are included.

\begin{proposition}\label{sec8:prop-a4-twoodd}
For every $n\geq3$, the monomials in \eqref{sec8:a4-twoodd}
are linearly independent modulo
$\textnormal{Id}^{\mathbb Z_2}(\mathcal A_4)$.
Moreover,
$c_{n-2,2}(\mathcal A_4)
=n(n-1)+n(n-1)(n-2)2^{n-3}$, for all $n\geq 3.$
\end{proposition}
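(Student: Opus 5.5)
The plan is to argue exactly as in Propositions~\ref{sec8:prop-a4-even} and~\ref{sec8:prop-a4-oneodd}. Suppose a linear combination of the monomials in \eqref{sec8:a4-twoodd} is a graded identity of $\mathcal A_4$. For each monomial I will choose a graded substitution, with every variable not explicitly assigned a value sent to zero. The substitution should give that monomial the value $e_{14}$ and kill every other displayed monomial, which forces its coefficient to vanish. The matrix units available are the even ones $e_{12},e_{14},e_{22},e_{24},e_{33}$ and the odd ones $e_{13},e_{23},e_{34}$. Throughout, a nonzero product of matrix units equal to a multiple of $e_{14}$ must start with a unit from row $1$ and end with a unit in column $4$.

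\textbf{First family.} Fix $z_jy_{i_1}\cdots y_{i_{n-2}}z_k$ and evaluate
\[
z_j\mapsto e_{13},\qquad y_{i_a}\mapsto e_{33}\ (1\leq a\leq n-2),\qquad z_k\mapsto e_{34}.
\]
The only assigned value with a nonzero first row is $e_{13}$, so a nonzero word must begin with $z_j$. Since $e_{34}$ annihilates every assigned value on its right, the word must end with $z_k$. This kills every monomial of the second family, because each of those begins with an even variable, here evaluated at $e_{33}$. It also kills the first-family monomial with the roles of $j$ and $k$ exchanged. The increasing order of the internal block then determines the chosen monomial uniquely, and its value is $e_{13}e_{33}^{\,n-2}e_{34}=e_{14}$. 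Hence its coefficient is zero.

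\textbf{Second family.} Fix $y_ty_{i_1}\cdots y_{i_s}z_jy_{j_1}\cdots y_{j_{n-s-3}}z_k$ and evaluate
\[
y_t\mapsto e_{12},\quad y_{i_a}\mapsto e_{22},\quad z_j\mapsto e_{23},\quad y_{j_b}\mapsto e_{33},\quad z_k\mapsto e_{34}.
\]
This time a nonzero word must begin with $y_t$, the only assigned value from row $1$. This excludes the whole first family, which begins with an odd variable, evaluated at $e_{23}$ or $e_{34}$. The word must end with $z_k$, as before. Tracing the path $1\to2\to3\to4$ through the matrix units shows three more things. All variables evaluated at $e_{22}$ lie between $y_t$ and $z_j$. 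All variables evaluated at $e_{33}$ lie between $z_j$ and $z_k$. The odd variables occur in the order $z_j$ then $z_k$. Together with the increasing order inside each block, this selects the chosen monomial, whose value is $e_{14}$. Its coefficient therefore vanishes. Letting the monomial and the odd labels vary proves independence.

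\textbf{Counting.} The first family is indexed by ordered pairs $(j,k)$ of distinct odd labels, which gives $n(n-1)$ monomials. The second family is indexed by the ordered triple $(t,j,k)$, giving $n(n-1)(n-2)$ choices. Each of the remaining $n-3$ labels is then placed either before or after $z_j$, with its position inside the block fixed by the increasing order, which contributes a factor $2^{n-3}$. Together with the spanning statement established just before the proposition, this yields $c_{n-2,2}(\mathcal A_4)=n(n-1)+n(n-1)(n-2)2^{n-3}$.

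I expect the only delicate point to be checking that these substitutions separate the two families and the two orders of $(j,k)$. The first-row and fourth-column constraints above settle this.
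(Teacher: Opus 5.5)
Your proof is correct and is essentially the paper's argument. The paper likewise sends $z_k\mapsto e_{34}$ to force $z_k$ to be last, and uses $z_j\mapsto e_{13}$ with the even variables at $e_{33}$ for the first family. For the second family it reuses the evaluation $y_t\mapsto e_{12}$, $y_{i_a}\mapsto e_{22}$, $z_j\mapsto e_{23}$, $y_{j_b}\mapsto e_{33}$ from Proposition~\ref{sec8:prop-a4-oneodd} on the prefix before $z_k$, and its count of the monomials is the same as yours.
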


\begin{proof}
Suppose that a linear combination of the displayed monomials
is a graded identity. Fix the ordered pair of odd labels $(j,k)$
and set $z_k\mapsto e_{34}$. In each evaluation below, all
variables not explicitly assigned a value are sent to zero.
Since $e_{34}\mathcal A_4=\{0\}$, every term in which $z_k$
is not last vanishes.

For a monomial of the first family, evaluate $z_j\mapsto e_{13}$
and all its even variables at $e_{33}$. Only the chosen
monomial survives, with value $e_{13}e_{34}=e_{14}$.
Hence its coefficient is zero.

For a monomial of the second family, apply the second
evaluation in the proof of
Proposition~\ref{sec8:prop-a4-oneodd} to its prefix preceding
$z_k$, with odd label $j$. That evaluation isolates the
chosen prefix, whose value is $e_{13}$. Thus only the
selected monomial survives after multiplication by
$z_k\mapsto e_{34}$, with value $e_{14}$.
Varying the monomial and the ordered pair of odd labels
proves independence.

The first family contains $n(n-1)$ monomials. For the second,
when $n\geq3$, there are $n(n-1)(n-2)$ choices of $(t,j,k)$,
and each of the remaining $n-3$ labels can occur before
or after $z_j$, with its position within that block fixed
by the increasing order. Hence the second family contains
$n(n-1)(n-2)2^{n-3}$ monomials, proving the formula.
\end{proof}

The preceding propositions allow us to conclude the following.

\begin{theorem}\label{sec8:thm-a4}
For the superalgebra $\mathcal A_4$, the following statements hold.
\begin{enumerate}
    \item
    The polynomials in \eqref{sec8:a4-identities} generate
    $\textnormal{Id}^{\mathbb Z_2}(\mathcal A_4)$ as a
    $T_{\mathbb Z_2}$-ideal.

    \item
    For every $n\geq3$, the monomials
    in \eqref{sec8:a4-even}, \eqref{sec8:a4-oneodd} and
    \eqref{sec8:a4-twoodd}
    form a basis of $P_n^{\mathbb Z_2}$ modulo $P_n^{\mathbb Z_2}\cap
     \textnormal{Id}^{\mathbb Z_2}(\mathcal A_4)$.

    \item $c_n^{\mathbb Z_2}(\mathcal A_4)
    =n\bigl(2n+n(n-1)2^{n-3}-1\bigr)$, for all $n\geq 3.$ 
\end{enumerate}
\end{theorem}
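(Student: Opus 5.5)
The proof will assemble the three propositions just established, in the same way as for $\mathcal A_2$ and $\mathcal A_3$. Let $I$ be the $T_{\mathbb Z_2}$-ideal generated by the polynomials in \eqref{sec8:a4-identities}. We already know that $I\subseteq\operatorname{Id}^{\mathbb Z_2}(\mathcal A_4)$ and that $P_{n-q,q}\subseteq I$ for $q\geq3$, since $z_1z_2z_3$ generates them after absorbing the even words between the odd variables. For each $q\in\{0,1,2\}$, the reductions preceding Propositions~\ref{sec8:prop-a4-even}, \ref{sec8:prop-a4-oneodd} and \ref{sec8:prop-a4-twoodd} show that $P_{n-q,q}$ is spanned modulo $I$ by the corresponding displayed families.

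One point in the case $q=2$ needs a justification. The reduction there uses $z_1y_2z_3y_4\in I$, which is not among the listed generators, so I will check it explicitly. Substituting $z_1\mapsto z_1y_2$ in $z_1z_2y_3$ gives $z_1y_2z_3y_4\in I$. The restriction to $z_j U z_k$ with $z_k$ last then follows from $z_1z_2y_3$ together with this consequence. Ordering the even block in front of $z_j$ uses $y_1[y_2,y_3]z_4$, with the odd variable $z_j$ playing the role of $z_4$. Ordering the block between $z_j$ and $z_k$ uses $z_1[y_2,y_3]$. Making these steps airtight is the main thing to verify, particularly when some of the blocks are empty.

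With spanning established, the three propositions give linear independence modulo $\operatorname{Id}^{\mathbb Z_2}(\mathcal A_4)$ of each family, for $n\geq4$ in the even case and $n\geq3$ otherwise. Independence of the union is automatic, because $P_n^{\mathbb Z_2}$ decomposes as a direct sum over the choices of which labels are odd, and graded identities respect this decomposition. Hence $P_n^{\mathbb Z_2}\cap\operatorname{Id}^{\mathbb Z_2}(\mathcal A_4)=P_n^{\mathbb Z_2}\cap I$ for all $n\geq3$. For $n=3$ in the even case, I will use the fact that $y_1[y_2,y_3]y_4$ has degree four: then $P_{3,0}\cap I=0$, and $c_{3,0}=6=n(n-1)$ still holds, because $\mathcal A_4$ has no even identity of degree three. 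To see this, evaluate $y_t\mapsto e_{12}$, $y_i\mapsto e_{22}$ and $y_p\mapsto e_{24}$, exactly as in the proof of Proposition~\ref{sec8:prop-a4-even}, which only needs one internal position.

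Since the field has characteristic zero, graded multilinearization shows that every $T_{\mathbb Z_2}$-ideal is generated by its multilinear elements. Low degrees are covered by the same independence arguments, so $\operatorname{Id}^{\mathbb Z_2}(\mathcal A_4)=I$, which proves items (1) and (2).

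For item (3), I will sum the three counts using \eqref{eqcodimensoes}:
\[
n(n-1)+\bigl(n+n(n-1)2^{n-2}\bigr)+\bigl(n(n-1)+n(n-1)(n-2)2^{n-3}\bigr).
\]
The non-exponential part is $2n(n-1)+n=2n^2-n$. The exponential part is
\[
n(n-1)2^{n-3}\bigl(2+(n-2)\bigr)=n^2(n-1)2^{n-3}.
\]
Together these give $n\bigl(2n-1+n(n-1)2^{n-3}\bigr)$, which is the stated formula.
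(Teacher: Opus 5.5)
Your proposal is correct and takes the paper's route: combine the spanning reductions modulo $I$ with the independence results of Propositions~\ref{sec8:prop-a4-even}, \ref{sec8:prop-a4-oneodd} and \ref{sec8:prop-a4-twoodd}, then sum the three counts via \eqref{eqcodimensoes}. Your extra checks fill in details the paper leaves implicit: that $z_1y_2z_3y_4$ is a substitution instance of $z_1z_2y_3$, and the even case $n=3$, which Proposition~\ref{sec8:prop-a4-even} as stated does not cover. Your arithmetic for item (3) is also correct.
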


Finally, by Remark~\ref{reflectionidentities} and the preceding theorem, we obtain the following
result.

\begin{theorem}\label{sec8:thm-a7}
For the superalgebra $\mathcal A_7$, the following statements hold.
\begin{enumerate}
    \item
    The polynomials $y_1[y_2,y_3]y_4,$ $ z_1z_2z_3,$ $[y_1,y_2]z_3,$ $y_1z_2z_3$ and $ z_1[y_2,y_3]y_4$ generate $\textnormal{Id}^{\mathbb Z_2}(\mathcal A_7)$ as a $T_{\mathbb Z_2}$-ideal.

    \item $c_n^{\mathbb Z_2}(\mathcal A_7)=c_n^{\mathbb Z_2}(\mathcal A_4)
    =n\bigl(2n+n(n-1)2^{n-3}-1\bigr)$, for all  $n\geq3$.
\end{enumerate}
\end{theorem}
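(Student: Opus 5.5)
The plan is to derive this from Theorem~\ref{sec8:thm-a4} through Remark~\ref{reflectionidentities}, by identifying $\mathcal A_7$ with the reflected superalgebra $\mathcal A_4^\rho$.

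First I check that the anti-automorphism $\rho(e_{ij})=e_{5-j,5-i}$ of $UT_4$ preserves $D_0$. It interchanges $e_{22}\leftrightarrow e_{33}$, $e_{12}\leftrightarrow e_{34}$ and $e_{13}\leftrightarrow e_{24}$. It fixes $e_{14}$ and $e_{23}$. Every basis element of $D_0$ is therefore sent to a basis element of $D_0$, so $\rho(D_0)=D_0$.

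Next I compute the reflected grading. Apply $\rho$ to the homogeneous components of $\mathcal A_4$ listed in Table~\ref{tab:superalgebras-D0}.
\begin{itemize}
\item The even radical units $e_{12},e_{14},e_{24}$ go to $e_{34},e_{14},e_{13}$.
\item The odd units $e_{13},e_{23},e_{34}$ go to $e_{24},e_{23},e_{12}$.
\item The even diagonal idempotents $e_{22},e_{33}$ are interchanged, so they stay even.
\end{itemize}
This is exactly the parity distribution of $\mathcal A_7$. Hence $\mathcal A_4^\rho=\mathcal A_7$ as superalgebras.

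Remark~\ref{reflectionidentities} then gives two conclusions. First, $\operatorname{Id}^{\mathbb Z_2}(\mathcal A_7)$ is generated as a $T_{\mathbb Z_2}$-ideal by the reversals $f^*$ of the generators in \eqref{sec8:a4-identities}. Second, $c_n^{\mathbb Z_2}(\mathcal A_7)=c_n^{\mathbb Z_2}(\mathcal A_4)$, and the stated formula is then Theorem~\ref{sec8:thm-a4}(3).

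It remains to identify the reversed generators with the list in item~(1). Reversing a monomial reverses the order of factors, so $[a,b]^*=[b^*,a^*]$.
\begin{itemize}
\item $(y_1[y_2,y_3]y_4)^*=-y_4[y_2,y_3]y_1$, which is $-y_1[y_2,y_3]y_4$ after renaming variables.
\item $(z_1z_2z_3)^*=z_3z_2z_1$, which is $z_1z_2z_3$ after renaming.
\item $(z_1[y_2,y_3])^*=-[y_2,y_3]z_1$, which is $-[y_1,y_2]z_3$ after renaming.
\item $(z_1z_2y_3)^*=y_3z_2z_1$, which is $y_1z_2z_3$ after renaming.
\item $(y_1[y_2,y_3]z_4)^*=-z_4[y_2,y_3]y_1$, which is $-z_1[y_2,y_3]y_4$ after renaming.
\end{itemize}
Renaming variables within the same parity and multiplying by nonzero scalars do not change the generated $T_{\mathbb Z_2}$-ideal. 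So the reversed set generates the same ideal as the five polynomials listed in item~(1).

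The only point needing care is the sign and ordering bookkeeping under reversal. The identification of $\mathcal A_4^\rho$ with $\mathcal A_7$ is a finite check on matrix units.
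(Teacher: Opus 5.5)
Your proposal is correct and follows the paper's route exactly: the paper derives this theorem in one line from Remark~\ref{reflectionidentities} and Theorem~\ref{sec8:thm-a4}. Your checks that $\rho(D_0)=D_0$, that $\mathcal A_4^\rho=\mathcal A_7$ as superalgebras, and that the reversals of the generators in \eqref{sec8:a4-identities} agree, up to sign and renaming, with the five listed polynomials supply precisely the details the paper leaves implicit.
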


\section{Graded identities of $\mathcal A_6$}

In this section, we determine generators for the graded identities of $\mathcal A_6$, describe multilinear bases for its relatively free graded algebra and compute its graded codimensions.

Recall that
\[
\begin{aligned}
(\mathcal A_6)_{0}
=F e_{22}+F e_{33}+F e_{14}+F e_{23}\quad \mbox{ and }\quad 
(\mathcal A_6)_{1}
=F e_{12}+F e_{13}+F e_{24}+F e_{34}.
\end{aligned}
\]
The polynomials
\begin{equation}\label{sec9:a6-identities}
[y_1,y_2][y_3,y_4],\quad
y_1z_2y_3,\quad
y_1z_2z_3,\quad
z_1z_2y_3,\quad
z_1z_2z_3
\end{equation}
are graded identities of $\mathcal A_6$. 

Let $I$ be the $T_{\mathbb Z_2}$-ideal generated by the polynomials
in \eqref{sec9:a6-identities}. Thus,
$I\subseteq\textnormal{Id}^{\mathbb Z_2}(\mathcal A_6).$ Consequently,
$P_{n-q,q}\subseteq I$ for all $3\leq q\leq n,$ and it suffices to consider the cases $q\in\{0,1,2\}$.

\begin{itemize}
    \item \textbf{Case} $q=0$.
\end{itemize}

By Proposition~\ref{prop:UT2-central} and the standard multilinear reduction for
$UT_2$, every polynomial in $P_{n,0}$,
with $n\geq1$, is congruent modulo $I$ to a linear combination
of the polynomials
\begin{equation}\label{sec9:a6-even}
y_1\cdots y_n,\qquad
y_{i_1}\cdots y_{i_{s_1}}
[y_k,y_{j_1},\ldots,y_{j_{s_2}}].
\end{equation}
In the second family, $s_1\geq0$, $s_2\geq1$, $s_1+s_2=n-1,$ $i_1<\cdots<i_{s_1}$ and $
k>j_1<\cdots<j_{s_2}.$

\begin{proposition}\label{sec9:prop-a6-even}
For every $n\geq 3$, the polynomials in \eqref{sec9:a6-even}
are linearly independent modulo
$\textnormal{Id}^{\mathbb Z_2}(\mathcal A_6)$. Moreover, $c_{n,0}(\mathcal A_6)
=c_n(UT_2)=2^{n-1}(n-2)+2$ for all {$n\geq3.$}
\end{proposition}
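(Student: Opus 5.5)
The plan is to show that the even part of $\mathcal A_6$ contains a copy of $UT_2$, so that the lower bound $c_{n,0}(\mathcal A_6)\geq c_n(UT_2)$ follows from known results, while the spanning family \eqref{sec9:a6-even} provides the matching upper bound. The even component $(\mathcal A_6)_0=Fe_{22}+Fe_{33}+Fe_{23}+Fe_{14}$ is a subalgebra, and $e_{14}$ annihilates it on both sides: $e_{14}e_{ij}=0=e_{ij}e_{14}$ for all even units listed. Hence $(\mathcal A_6)_0\cong UT_2\oplus F e_{14}$, where the second summand has zero multiplication. The copy of $UT_2$ is $Fe_{22}+Fe_{23}+Fe_{33}$, via $e_{22}\mapsto e_{11}$, $e_{23}\mapsto e_{12}$, $e_{33}\mapsto e_{22}$.

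A multilinear polynomial $f\in P_{n,0}$ in even variables only is a graded identity of $\mathcal A_6$ if and only if it vanishes on $(\mathcal A_6)_0$. I would then observe that, for $n\geq2$, every multilinear monomial of degree $n$ vanishes whenever some variable is evaluated at $e_{14}$. Therefore, by multilinearity, $f$ vanishes on $(\mathcal A_6)_0$ exactly when it vanishes on the $UT_2$ copy. This gives
\[
P_{n,0}\cap\operatorname{Id}^{\mathbb Z_2}(\mathcal A_6)=P_{n,0}\cap\operatorname{Id}(UT_2)
\]
after renaming $y_i$ as $x_i$. Consequently $c_{n,0}(\mathcal A_6)=c_n(UT_2)=2^{n-1}(n-2)+2$ by Proposition~\ref{prop:UT2-central}.

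For linear independence, the polynomials in \eqref{sec9:a6-even} are, up to relabeling, precisely the standard basis of $P_n$ modulo $\operatorname{Id}(UT_2)$. Their number is $c_n(UT_2)$, because they span modulo $\langle[x_1,x_2][x_3,x_4]\rangle_T=\operatorname{Id}(UT_2)$ and $\dim P_n/(P_n\cap \operatorname{Id}(UT_2))=c_n(UT_2)$. A spanning set of that size is a basis, so they are independent modulo $\operatorname{Id}(UT_2)$, hence modulo $\operatorname{Id}^{\mathbb Z_2}(\mathcal A_6)$ by the displayed equality.

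If a self-contained check is preferred, I would instead evaluate directly, mimicking the proof of Proposition~\ref{sec6:prop-a2-even}. First send all variables to $e_{22}+e_{33}$, which kills every commutator and forces the coefficient of $y_1\cdots y_n$ to vanish. Then, for a chosen commutator term, send $y_k\mapsto e_{23}$, the $y_{j_r}\mapsto e_{33}$ and the $y_{i_r}\mapsto e_{22}+e_{33}$. The chosen term evaluates to $e_{23}$, and the minimal-index ordering shows it is the only surviving term.

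The only mildly delicate point is the counting argument: one must confirm that the family \eqref{sec9:a6-even} has exactly $c_n(UT_2)$ elements. This is the classical count $1+\sum_{s_2}\binom{n}{s_2+1}s_2$. If that check feels loose, the direct evaluation argument bypasses it entirely.
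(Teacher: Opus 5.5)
Your proposal is correct. Your main argument takes a different route from the paper, while your fallback paragraph is exactly the paper's proof.

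The paper works entirely with separating evaluations inside $\mathcal A_6$. It sends all variables to $E=e_{22}+e_{33}$ to kill the coefficient of $y_1\cdots y_n$. It then uses $y_k\mapsto e_{23}$, $y_{i_a}\mapsto E$, $y_{j_b}\mapsto e_{33}$ to isolate each commutator polynomial. Finally it obtains $c_{n,0}(\mathcal A_6)$ by counting the family directly, $1+\sum_{r=2}^{n}\binom nr(r-1)=2^{n-1}(n-2)+2$, and only afterwards notes that this equals $c_n(UT_2)$.

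You instead observe that $(\mathcal A_6)_0=(Fe_{22}+Fe_{23}+Fe_{33})\oplus Fe_{14}$, with $e_{14}$ annihilating $(\mathcal A_6)_0$ on both sides. Hence, for $n\geq 2$, $P_{n,0}\cap\operatorname{Id}^{\mathbb Z_2}(\mathcal A_6)=P_{n,0}\cap\operatorname{Id}(UT_2)$. This is valid, and the restriction $n\geq2$ is exactly what is needed. It also explains conceptually why the two codimensions coincide, in the same spirit as the paper's treatment of the even part of $\mathcal A_8$ in Proposition~\ref{sec10:prop-a8-even}.

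The trade-offs are as follows.
\begin{itemize}
\item Your route imports the known codimension formula for $UT_2$ and still requires the cardinality of the family.
\item The paper's route is self-contained.
\item The paper's explicit evaluations are reused in Propositions~\ref{sec9:prop-a6-oneodd} and~\ref{sec9:prop-a6-twoodd}, where the even block is flanked by $z_t\mapsto e_{12}$ or $e_{34}$. The isomorphism argument alone does not supply these evaluations, so your fallback paragraph is the piece that carries over.
\end{itemize}

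One point of logic needs fixing. The sentence ``their number is $c_n(UT_2)$, because they span'' does not justify the count. Spanning only shows that the family has at least $c_n(UT_2)$ elements. The count must be done separately, as you yourself note at the end. It is immediate from $\sum_{r=0}^{n}\binom nr(r-1)=n2^{n-1}-2^n=2^{n-1}(n-2)$ after removing the $r=0$ and $r=1$ terms. Alternatively, cite directly that this family is the standard basis of $P_n$ modulo $\operatorname{Id}(UT_2)$, which makes the count automatic.
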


\begin{proof}
Suppose that a linear combination of the displayed polynomials is a
graded identity. Evaluating all variables at $E=e_{22}+e_{33}$ annihilates every
commutator and gives $E$ on $y_1\cdots y_n$, so its coefficient is zero.

For a fixed polynomial of the second family, evaluate
\[
y_k\mapsto e_{23},\qquad
y_{i_a}\mapsto E\ (1\leq a\leq s_1),\qquad
y_{j_b}\mapsto e_{33}\ (1\leq b\leq s_2).
\]
A nonzero term must have $y_k$ inside its commutator.
The variables evaluated at $E$ must occur outside the commutator,
since $E$ commutes with all assigned elements. Conversely, those
evaluated at $e_{33}$ must occur inside it, since $e_{33}e_{23}=0$.
Thus both blocks are determined. The minimum-index convention forces
$y_k$ to be the first entry of the commutator and the increasing order
of the other entries determines the selected polynomial uniquely.
It evaluates to $e_{23}$, while every other commutator polynomial
vanishes. Its coefficient is therefore zero, proving independence.

For a commutator of length $r$, there are $\binom nr$ choices of its
variables and $r-1$ choices of its first entry. Hence
\[
c_{n,0}(\mathcal A_6)
=1+\sum_{r=2}^{n}\binom nr(r-1)
=2^{n-1}(n-2)+2=c_n(UT_2),\qquad n\geq 3.
\] \end{proof}

\begin{itemize}
    \item \textbf{Case} $q=1$.
\end{itemize}

Since $[y_2,y_3][y_4,y_5]$ and $y_1z_2y_3$ belong to $I$, any element of $P_{n-1,1}$ can be written, modulo $I$, as a linear combination of the following polynomials:
\begin{equation}\label{sec9:a6-oneodd}
\begin{gathered}
z_t y_{i_1}\cdots y_{i_{n-1}},\quad
y_{i_1}\cdots y_{i_{n-1}}z_t,\quad 
z_t y_{i_1}\cdots y_{i_{s_1}}
[y_k,y_{j_1},\ldots,y_{j_{s_2}}],\quad
y_{i_1}\cdots y_{i_{s_1}}
[y_k,y_{j_1},\ldots,y_{j_{s_2}}]z_t.
\end{gathered}
\end{equation}
In the first two families, $i_1<\cdots<i_{n-1}$.
In the last two families, $s_1\geq0$, $s_2\geq1$, $s_1+s_2=n-2$, $i_1<\cdots<i_{s_1}$ and $k>j_1<\cdots<j_{s_2}.$

\begin{proposition}\label{sec9:prop-a6-oneodd}
For every $n\geq 3$, the polynomials in \eqref{sec9:a6-oneodd}
are linearly independent modulo
$\textnormal{Id}^{\mathbb Z_2}(\mathcal A_6)$. Moreover, $c_{n-1,1}(\mathcal A_6)
=2n\,c_{n-1}(UT_2)
=2n\bigl(2^{n-2}(n-3)+2\bigr)$, for all $n\geq3.$
\end{proposition}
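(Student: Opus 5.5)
We follow the pattern of Propositions~\ref{sec6:prop-a2-oneodd} and \ref{sec9:prop-a6-even}. Suppose a linear combination $f$ of the polynomials in \eqref{sec9:a6-oneodd} is a graded identity of $\mathcal A_6$. We fix the odd label $t$ and send every other variable not explicitly assigned to zero. Since $f$ is multilinear in $z_t$, the terms beginning with $z_t$ and those ending with $z_t$ can be separated by the choice of the value of $z_t$.

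For the first and third families we take $z_t\mapsto e_{12}$. Because $\mathcal A_6 e_{12}=0$, every term in which $z_t$ is not the first letter vanishes. The remaining part has the form $z_t h$, where $h$ is a linear combination of the polynomials \eqref{sec9:a6-even} on the other $n-1$ labels. We then apply the evaluations from the proof of Proposition~\ref{sec9:prop-a6-even} to $h$:
\begin{enumerate}
\item Sending all even variables to $E=e_{22}+e_{33}$ gives $e_{12}E=e_{12}$ on the commutator-free word, and kills every commutator term. Hence that coefficient is zero.
\item For a commutator term, we use $y_k\mapsto e_{23}$, the outer block $\mapsto E$, and the remaining commutator entries $\mapsto e_{33}$. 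The selected term gives $e_{12}e_{23}=e_{13}\neq0$, and the uniqueness argument of Proposition~\ref{sec9:prop-a6-even} shows that no other term survives.
\end{enumerate}

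For the second and fourth families we take $z_t\mapsto e_{34}$. Since $e_{34}\mathcal A_6=0$, only terms ending with $z_t$ survive, and they have the form $h z_t$. Again we apply the same two evaluations:
\begin{enumerate}
\item All even variables $\mapsto E$ gives $Ee_{34}=e_{34}$ on the commutator-free word, so its coefficient is zero.
\item For a commutator term we must replace the earlier evaluation, because the value $e_{23}$ of the commutator followed by $e_{34}$ gives $e_{24}$, which is fine, but the outer block must still multiply correctly. The outer variables go to $E$, which satisfies $Ee_{23}=e_{23}$. The entries $y_{j_b}\mapsto e_{33}$ give $[e_{23},e_{33},\dots]=e_{23}$. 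The total value is $e_{23}e_{34}=e_{24}\neq 0$, and the uniqueness argument carries over verbatim.
\end{enumerate}

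The one point to check is that no term from the opposite pair of families survives under these evaluations. A term that begins with $z_t$ vanishes once $z_t\mapsto e_{34}$, since $e_{34}$ must then be followed by nonzero even factors unless the word has length one, and $n\geq3$. Symmetrically, a term ending with $z_t$ vanishes once $z_t\mapsto e_{12}$. The main subtlety is exactly this bookkeeping, together with the case $s_1=0$, where there is no outer block. That case causes no difficulty because $e_{12}e_{23}$ and $e_{23}e_{34}$ are nonzero directly.

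For the count, there are $n$ choices of $t$ and two possible positions of $z_t$. In each case the even part ranges over the basis \eqref{sec9:a6-even} in $n-1$ variables, which has $c_{n-1}(UT_2)$ elements. This gives $2n\,c_{n-1}(UT_2)=2n\bigl(2^{n-2}(n-3)+2\bigr)$.
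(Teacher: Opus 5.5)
Your proposal is correct and is essentially the paper's own proof. You fix the odd label, separate the terms beginning and ending with $z_t$ by choosing $z_t\mapsto e_{12}$ or $z_t\mapsto e_{34}$ (using $\mathcal A_6e_{12}=0=e_{34}\mathcal A_6$), reuse the separating evaluations of Proposition~\ref{sec9:prop-a6-even}, and count $2n\,c_{n-1}(UT_2)$; your remark that the evaluation for the right-hand commutator terms ``must be replaced'' is unnecessary, since you use exactly the same assignment, as the paper does.
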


\begin{proof}
Suppose that a linear combination of these polynomials is a graded
identity. Fix the odd label $t$ and set $z_r=0$ for $r\ne t$.
The remaining relation has the form
$z_tf+gz_t$, where $f,g$ are linear combinations of the polynomials
in \eqref{sec9:a6-even} on the remaining $n-1$ labels.

Evaluating all even variables at $E$ and taking $z_t=e_{12}$ or
$z_t=e_{34}$ isolates, respectively, the commutator-free coefficients
of $f$ and $g$. Both therefore vanish.

For each remaining even polynomial, apply the separating evaluation
in the proof of Proposition~\ref{sec9:prop-a6-even}.
Taking $z_t=e_{12}$ isolates its coefficient in $f$, with value
$e_{12}e_{23}=e_{13}$, while the right-hand terms vanish because
$e_{23}e_{12}=0$. Taking $z_t=e_{34}$ similarly isolates its
coefficient in $g$, with value $e_{23}e_{34}=e_{24}$, since
$e_{34}e_{23}=0$. Thus all coefficients vanish.

There are $n$ choices of the odd label, two endpoint positions and
$c_{n-1}(UT_2)$ choices for the even polynomial. This gives the formula. \end{proof}

\begin{itemize}
    \item \textbf{Case} $q=2$.
\end{itemize}

Since $[y_1,y_2][y_3,y_4]$, $y_1z_2z_3$ and $z_1z_2y_3$ belong to $I$, any polynomial in $P_{n-2,2}$ can be written, modulo $I$, as a linear combination of the following polynomials:
\begin{equation}\label{sec9:a6-twoodd}
z_t y_{i_1}\cdots y_{i_{n-2}}z_l,\qquad
z_t y_{i_1}\cdots y_{i_{s_1}}
[y_k,y_{j_1},\ldots,y_{j_{s_2}}]z_l.
\end{equation}
In the first family, $i_1<\cdots<i_{n-2}$.
In the second family, $s_1\geq0$, $ s_2\geq1$, $s_1+s_2=n-3$, $i_1<\cdots<i_{s_1}$ and $
k>j_1<\cdots<j_{s_2}.$ 

\begin{proposition}\label{sec9:prop-a6-twoodd}
For every $n\geq3$, the polynomials in \eqref{sec9:a6-twoodd}
are linearly independent modulo
$\textnormal{Id}^{\mathbb Z_2}(\mathcal A_6)$. Moreover, $c_{n-2,2}(\mathcal A_6)
=n(n-1)c_{n-2}(UT_2)
=n(n-1)\bigl(2^{n-3}(n-4)+2\bigr),$ for all $n\geq3.$
\end{proposition}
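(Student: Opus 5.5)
The plan is to follow the same reduction used for the case $q=1$ in Proposition~\ref{sec9:prop-a6-oneodd}. First fix an ordered pair of distinct odd labels $(t,l)$ and set every other odd variable to zero. A linear relation among the polynomials in \eqref{sec9:a6-twoodd} then takes the form $z_t h z_l$. Here $h$ is a linear combination of the polynomials in \eqref{sec9:a6-even} on the remaining $n-2$ labels; when $n=3$, $h$ is just a multiple of the single even variable. The task therefore reduces to showing that $z_t h z_l\in\operatorname{Id}^{\mathbb Z_2}(\mathcal A_6)$ forces $h=0$ as a formal combination.

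To isolate the coefficient of the commutator-free word $y_{i_1}\cdots y_{i_{n-2}}$, evaluate all even variables at $E=e_{22}+e_{33}$, put $z_t\mapsto e_{12}$ and $z_l\mapsto e_{24}$. Every commutator vanishes, because $E$ commutes with itself. The surviving word evaluates to $e_{12}Ee_{24}=e_{14}\neq0$, so its coefficient is zero.

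Next fix a polynomial of the second family. Apply the separating evaluation from the proof of Proposition~\ref{sec9:prop-a6-even} to its even part: $y_k\mapsto e_{23}$, the outside variables $\mapsto E$, and the remaining commutator variables $\mapsto e_{33}$. Put $z_t\mapsto e_{12}$ and $z_l\mapsto e_{34}$. By that proposition's argument, the chosen polynomial is the only even polynomial with a nonzero value, namely $e_{23}$. The chosen term therefore gives $e_{12}e_{23}e_{34}=e_{14}$. The commutator-free term has coefficient already zero, and every other commutator term evaluates to $0$ in the middle. Hence each coefficient vanishes.

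The count then follows. There are $n(n-1)$ ordered pairs $(t,l)$, and for each the middle part ranges over the basis \eqref{sec9:a6-even} in $n-2$ variables. This gives $n(n-1)c_{n-2}(UT_2)$. The closed formula $2^{n-3}(n-4)+2$ must be checked for $n=3$, where $c_1=1$; this matches, since $2^0(-1)+2=1$.

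The only delicate point is making sure that the middle evaluation, an element of $Fe_{23}+FE$ (or its product with $E$), is not killed on multiplication by $e_{12}$ on the left and $e_{34}$ on the right, and that the other terms still give zero. This holds because $e_{12}Ee_{34}=0$ while $e_{12}e_{23}e_{34}=e_{14}$.
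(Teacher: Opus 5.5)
Your argument is essentially the paper's proof: it uses the same two evaluations (all even variables at $E$ with $z_t\mapsto e_{12}$, $z_l\mapsto e_{24}$, then the separating evaluation of Proposition~\ref{sec9:prop-a6-even} with $z_t\mapsto e_{12}$, $z_l\mapsto e_{34}$) and the same count. The one slip is that zeroing the other odd variables leaves $z_thz_l+z_lh'z_t$, not just $z_thz_l$, so $z_thz_l$ is not known to be an identity on its own; the opposite-order part is nevertheless killed by both of your evaluations because $e_{24}\mathcal A_6=e_{34}\mathcal A_6=0$, which the paper notes explicitly, and once you say this the argument is complete.
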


\begin{proof}
Suppose that a linear combination of the displayed polynomials is a
graded identity. Fix an ordered pair $(t,l)$ of odd labels and set
$z_r=0$ for $r\notin\{t,l\}$.
Evaluate $z_t\mapsto e_{12}$, $z_l\mapsto e_{24}$ and all even
variables at $E$. Only the commutator-free term with odd order $(t,l)$
survives, with value $e_{14}$. Varying $(t,l)$ eliminates all such
coefficients.

For each remaining even polynomial, use the separating evaluation
from Proposition~\ref{sec9:prop-a6-even}, with
$z_t\mapsto e_{12}$ and $z_l\mapsto e_{34}$.
The selected term evaluates to $e_{12}e_{23}e_{34}=e_{14}$.
All other terms of the same odd order vanish by that proposition,
and terms of the opposite order vanish because $e_{34}e_{23}=0$.
Thus every coefficient is zero, proving independence.

For each of the $n(n-1)$ ordered pairs $(t,l)$, the even block ranges
over the standard multilinear basis of $UT_2$ in $n-2$ variables.
For $n\geq3$, this gives the stated formula. \end{proof}

The preceding propositions allow us to conclude the following.

\begin{theorem}\label{sec9:thm-a6}
For the superalgebra $\mathcal A_6$, the following statements hold.
\begin{enumerate}
    \item
    The polynomials in \eqref{sec9:a6-identities} generate
    $\textnormal{Id}^{\mathbb Z_2}(\mathcal A_6)$ as a
    $T_{\mathbb Z_2}$-ideal.

    \item
    For every $n\geq3$, the polynomials
    in \eqref{sec9:a6-even}, \eqref{sec9:a6-oneodd} and
    \eqref{sec9:a6-twoodd} form a basis of
    $P_n^{\mathbb Z_2}$ modulo $
    {P_n^{\mathbb Z_2}\cap
     \textnormal{Id}^{\mathbb Z_2}(\mathcal A_6)}$. 

    \item $c_n^{\mathbb Z_2}(\mathcal A_6)
    =2^{n-3}(n^3-n^2-4n-8)+2(n^2+n+1),$ , for all $n\geq 3.$  
\end{enumerate}
\end{theorem}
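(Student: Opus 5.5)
The proof follows the pattern already used for $\mathcal A_2$, $\mathcal A_3$ and $\mathcal A_4$: we combine the three propositions of this section with equation~\eqref{eqcodimensoes}. Let $I$ be the $T_{\mathbb Z_2}$-ideal generated by \eqref{sec9:a6-identities}.

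Since $z_1z_2z_3$, $y_1z_2z_3$ and $z_1z_2y_3$ lie in $I$, every multilinear monomial with at least three odd variables lies in $I$. For $q\ge3$ such a monomial contains a subword $z\,U\,z\,V\,z$. If the even words $U,V$ are nonempty, the identity $y_1z_2y_3$ applies after a graded substitution; otherwise one of $zzz$, $zzy$, $yzz$ appears directly. So $P_{n-q,q}\subseteq I$ for $q\ge3$.

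For $q\in\{0,1,2\}$ the reductions preceding Propositions~\ref{sec9:prop-a6-even}, \ref{sec9:prop-a6-oneodd} and~\ref{sec9:prop-a6-twoodd} show that \eqref{sec9:a6-even}, \eqref{sec9:a6-oneodd} and \eqref{sec9:a6-twoodd} span $P_{n-q,q}$ modulo $I$.

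The one point to double-check is the $q=1$ spanning claim. A monomial $Uz_tV$ with $U,V$ both nonempty lies in $I$ because $y_1z_2y_3\in I$. Hence the odd variable sits at an end, and the even factor is reduced using $[y_1,y_2][y_3,y_4]$, exactly as for $UT_2$. For $q=2$, the identities $y_1z_2z_3$ and $z_1z_2y_3$ (together with $y_1z_2y_3$, applied to any inner odd variable flanked by even letters) force the pattern $z_t\,W\,z_l$ with $W$ even. This spanning is exactly what was asserted before Proposition~\ref{sec9:prop-a6-twoodd}.

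The three propositions give independence of these families modulo $\operatorname{Id}^{\mathbb Z_2}(\mathcal A_6)\supseteq I$. Hence $P_{n-q,q}\cap I=P_{n-q,q}\cap\operatorname{Id}^{\mathbb Z_2}(\mathcal A_6)$ for every $q$, so $P_n^{\mathbb Z_2}\cap I=P_n^{\mathbb Z_2}\cap\operatorname{Id}^{\mathbb Z_2}(\mathcal A_6)$ for $n\ge3$. This gives item~(2). For item~(1), in characteristic zero both $T_{\mathbb Z_2}$-ideals are generated by their multilinear parts, so it remains to handle degrees $n\le2$. Here $I$ contains nothing nonzero of degree $\le2$, and $\mathcal A_6$ satisfies no nonzero multilinear graded identity of degree $\le2$. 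One checks the latter directly on $y_1y_2$, $y_2y_1$, $y_1z_2$, $z_2y_1$, $z_1z_2$, $z_2z_1$ using $e_{23}e_{34}$, $e_{12}e_{23}$, $e_{12}e_{24}$, and so on.

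For item~(3) we sum $c_{n,0}+c_{n-1,1}+c_{n-2,2}$:
\[
2^{n-1}(n-2)+2+2n\bigl(2^{n-2}(n-3)+2\bigr)+n(n-1)\bigl(2^{n-3}(n-4)+2\bigr).
\]
Putting everything over $2^{n-3}$, the exponential part is
\[
4(n-2)+4n(n-3)+n(n-1)(n-4)=n^3-n^2-4n-8,
\]
and the constant part is $2+4n+2n^2-2n=2(n^2+n+1)$. This matches the stated formula.

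I expect the main obstacle to be making the $q=2$ spanning claim fully rigorous. The inner odd variable must be excluded from interior positions by the correct identity in each configuration. In particular, a monomial of the form $y\,z\,W\,z$ must be shown to lie in $I$: if $W$ is nonempty this follows from $y_1z_2y_3$, and if $W$ is empty it follows from $y_1z_2z_3$.
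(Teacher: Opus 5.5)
Your proposal is correct and follows the paper's route. The paper obtains the theorem by combining Propositions~\ref{sec9:prop-a6-even}, \ref{sec9:prop-a6-oneodd} and~\ref{sec9:prop-a6-twoodd} with \eqref{eqcodimensoes}. Your additional checks only make explicit what the paper leaves implicit: using $y_1z_2y_3$ to remove interior odd variables when $q\geq 2$ (the paper's $q=2$ spanning remark does not list this identity), showing there are no multilinear identities in degree $\leq 2$ so that item~(1) follows, and the arithmetic for item~(3).
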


\section{Graded identities of $\mathcal A_8$}

In this section, we determine generators for the graded identities of $\mathcal A_8$, describe multilinear bases for its relatively free graded algebra and compute its graded codimensions.

Recall that
\[
\begin{aligned}
(\mathcal A_8)_{0}
=F e_{22}+F e_{33}+F e_{13}+F e_{24}\quad \mbox{ and }\quad 
(\mathcal A_8)_{1}
&=F e_{12}+F e_{14}+F e_{23}+F e_{34}.
\end{aligned}
\]
The polynomials
\begin{equation}\label{sec10:a8-identities}
\begin{gathered}
{}[y_1,y_2][y_3,y_4],\quad y_1[y_2,y_3]y_4,\quad
\operatorname{St}_3(y_1,y_2,y_3),\quad 
[y_1,y_2]z_3y_4,\quad z_1[y_2,y_3]y_4,\quad
y_1[y_2,y_3]z_4,\quad y_1z_2[y_3,y_4],\\
y_1z_2z_3y_4,\quad z_1[y_2,y_3]z_4,\quad
[y_1,y_2]z_3z_4,\quad z_1z_2[y_3,y_4],\quad 
z_1z_2z_3y_4,\quad y_1z_2z_3z_4,\quad z_1z_2z_3z_4
\end{gathered}
\end{equation}
are graded identities of $\mathcal A_8$.

Let $I$ be the $T_{\mathbb Z_2}$-ideal generated by the polynomials
in \eqref{sec10:a8-identities}. Thus,
$I\subseteq\textnormal{Id}^{\mathbb Z_2}(\mathcal A_8).$  Consequently,
$P_{n-q,q}\subseteq I$ for all $4\leq q\leq n,$
and it suffices to consider the cases $q\in\{0,1,2,3\}$.

\begin{itemize}
    \item \textbf{Case} $q=0$.
\end{itemize}

The first three polynomials in \eqref{sec10:a8-identities} are those
used in the reduction preceding Proposition~\ref{sec7:prop-a3-even}.
Hence, for $n\geq3$, $P_{n,0}$ is spanned modulo $I$ by
\begin{equation}\label{sec10:a8-even}
y_a y_{i_1}\cdots y_{i_{n-1}},\qquad
y_{i_1}\cdots y_{i_{n-1}}y_a,
\end{equation}
with the common word $y_1\cdots y_n$ retained only once.

\begin{proposition}\label{sec10:prop-a8-even}
For every $n\geq4$, the monomials in \eqref{sec10:a8-even}, with
$y_1\cdots y_n$ counted only once, are linearly independent modulo
$\textnormal{Id}^{\mathbb Z_2}(\mathcal A_8)$. Moreover, $c_{n,0}(\mathcal A_8)=2n-1,$ for all $ n\geq4.$
\end{proposition}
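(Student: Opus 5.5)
Here the even part of $\mathcal A_8$ is $(\mathcal A_8)_0=Fe_{22}+Fe_{33}+Fe_{13}+Fe_{24}$. I will follow the pattern of Proposition~\ref{sec7:prop-a3-even}, now with the even radical units $e_{13}$ and $e_{24}$ replacing $e_{12}$ and $e_{34}$.

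For each $t$, let $i_1<\cdots<i_{n-1}$ be the remaining labels, and put
\[
L_t=y_ty_{i_1}\cdots y_{i_{n-1}},\qquad R_t=y_{i_1}\cdots y_{i_{n-1}}y_t .
\]
Since $L_1=R_n=y_1\cdots y_n$, I index the family as all $R_t$ ($1\le t\le n$) together with $L_t$ for $2\le t\le n$. Suppose that
\[
f=\sum_{t=1}^{n}\alpha_tR_t+\sum_{t=2}^{n}\beta_tL_t\in\operatorname{Id}^{\mathbb Z_2}(\mathcal A_8).
\]
Every evaluation below uses only even elements, so it is admissible.

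First, evaluate $y_1\mapsto e_{24}$ and all other variables at $e_{22}$. Because $e_{24}e_{22}=0$ and $e_{22}e_{24}=e_{24}$, a word is nonzero exactly when $y_1$ is its last letter. Among the listed words, only $R_1$ ends with $y_1$: each $L_t$ with $t\ge2$ ends with $y_n\neq y_1$, and $R_t$ with $t\ge2$ ends with $y_t$. Hence $f\mapsto\alpha_1e_{24}$, and so $\alpha_1=0$.

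Next, fix $t\ge2$ and evaluate $y_t\mapsto e_{13}$ and all other variables at $e_{33}$. Since $e_{33}e_{13}=0$ and $e_{13}e_{33}=e_{13}$, a word survives exactly when it begins with $y_t$. The $R_s$ that begin with something other than $y_1$ is $R_1$ alone, and its coefficient is already zero. Among the $L$'s, only $L_t$ begins with $y_t$. Therefore $f\mapsto\beta_te_{13}$, and $\beta_t=0$.

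Finally, with all $\beta_t=0$, fix $s$ and evaluate $y_s\mapsto e_{24}$ and all other variables at $e_{22}$. The surviving words are those ending in $y_s$. When $s\ge2$ this is $R_s$ alone; when $s=n$, $R_n$ also equals the common word, which is listed once. Hence $\alpha_s=0$ for all $s$.

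This gives linear independence. The count is $n+(n-1)=2n-1$ monomials, and together with the spanning argument already noted this yields $c_{n,0}(\mathcal A_8)=2n-1$.

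The only step that needs care is checking that the one-sided annihilation facts $e_{33}e_{13}=0$ and $e_{24}e_{22}=0$ single out the first or last letter. I also need to handle the duplicated word $y_1\cdots y_n$ consistently in the indexing; with the convention above, no further obstacle is expected.
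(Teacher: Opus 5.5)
Your proof is correct and is, in substance, the paper's proof: the paper observes that $e_{22}\mapsto e_{33}$, $e_{33}\mapsto e_{22}$, $e_{12}\mapsto e_{13}$, $e_{34}\mapsto e_{24}$ is an algebra isomorphism $(\mathcal A_3)_0\to(\mathcal A_8)_0$ and cites Proposition~\ref{sec7:prop-a3-even}, and your three evaluations are exactly the images under this isomorphism of the evaluations used there. One small slip is harmless: $L_n=y_ny_1\cdots y_{n-1}$ ends with $y_{n-1}$, not $y_n$, but $y_{n-1}$ is still different from $y_1$ because $n\geq4$.
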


\begin{proof}
The linear map from $(\mathcal A_3)_0$ to $(\mathcal A_8)_0$ defined by
\[
e_{22}\mapsto e_{33},\qquad e_{12}\mapsto e_{13},\qquad
e_{33}\mapsto e_{22},\qquad e_{34}\mapsto e_{24}
\]
is an algebra isomorphism. Thus the two even components have the same
ordinary identities and the result follows from
Proposition~\ref{sec7:prop-a3-even}.
\end{proof}

\begin{itemize}
    \item \textbf{Case} $q=1$.
\end{itemize}

Write an arbitrary multilinear monomial with one odd variable
as $Uz_kV$, where $U,V$ are possibly empty even words.
If $V$ is empty, the identity $y_1[y_2,y_3]z_4$ allows us to
order all variables of $U$ except its first one. If $U$ is empty,
$z_1[y_2,y_3]y_4$ orders all variables of $V$ except its last one.
In each case, the adjacent transpositions are consequences of
the indicated identity, with the surrounding even words absorbed
into its substitutions or used as outside factors.

Using identities $y_1z_2[y_3,y_4]$ and $[y_1,y_2]z_3y_4$, we may assume that if both $U$ and $V$ are nonempty, all variables within each block
can be ordered increasingly. 
Thus, for $n\geq2$, $P_{n-1,1}$ is spanned modulo $I$ by
\begin{equation}\label{sec10:a8-oneodd}
\begin{gathered}
y_a y_{i_1}\cdots y_{i_{n-2}}z_k,\quad
 z_k y_{i_1}\cdots y_{i_{n-2}}y_a,\quad 
y_{i_1}\cdots y_{i_s}z_k y_{j_1}\cdots y_{j_t},
\quad s,t\geq1,\quad s+t=n-1.
\end{gathered}
\end{equation}
These families are disjoint, according as the odd variable occurs last, first, or internally.

\begin{proposition}\label{sec10:prop-a8-oneodd}
For every $n\geq4$, the monomials in \eqref{sec10:a8-oneodd} are linearly
independent modulo $\textnormal{Id}^{\mathbb Z_2}(\mathcal A_8)$.
Moreover, $c_{n-1,1}(\mathcal A_8)=n\bigl(2^{n-1}+2n-4\bigr),$ , for all $n\geq 4.$
\end{proposition}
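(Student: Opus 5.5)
Fix the odd label $k$ and set every other odd variable to $0$ (there are none in $P_{n-1,1}$, but this fixes the position of $z_k$). Suppose a linear combination $f$ of the monomials in \eqref{sec10:a8-oneodd} is a graded identity of $\mathcal A_8$. The plan is to isolate each coefficient by an evaluation into matrix units, treating the three families by where $z_k$ sits. Recall that the odd units are $e_{12},e_{14},e_{23},e_{34}$ and the even units include $e_{13},e_{24}$ together with the idempotents $e_{22},e_{33}$.

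\emph{Odd variable last.} For $y_ay_{i_1}\cdots y_{i_{n-2}}z_k$, evaluate
\[
y_a\mapsto e_{13},\qquad y_{i_b}\mapsto e_{33},\qquad z_k\mapsto e_{34}.
\]
A nonzero word must begin with $y_a$, since $e_{33}e_{13}=0$, and must end with $z_k$, since $e_{34}$ annihilates everything on the right. Every internal monomial $Uz_kV$ with $V\neq\emptyset$ therefore vanishes. The increasing order of the middle block then selects the chosen monomial, with value $e_{14}$, so its coefficient is zero.

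\emph{Odd variable first.} For $z_ky_{i_1}\cdots y_{i_{n-2}}y_a$, use the symmetric evaluation
\[
z_k\mapsto e_{12},\qquad y_{i_b}\mapsto e_{22},\qquad y_a\mapsto e_{24}.
\]
A nonzero word must start with $z_k$, since $e_{22}e_{12}=0$ and $e_{24}$ kills everything on the left, and must end with $y_a$. This isolates the monomial with value $e_{14}$.

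\emph{Odd variable internal.} For $y_{i_1}\cdots y_{i_s}z_ky_{j_1}\cdots y_{j_t}$ with $s,t\geq1$, evaluate
\[
y_{i_b}\mapsto e_{22},\qquad z_k\mapsto e_{23},\qquad y_{j_b}\mapsto e_{33}.
\]
Check that the chosen monomial is the only survivor in $f$. The nonzero value is $e_{23}$, which is odd but not central, and that is enough, since we only need the evaluation to be nonzero. Any word with a nonzero value must place all $e_{22}$-variables before $z_k$ and all $e_{33}$-variables after it. This kills the first two families, because for $s,t\geq1$ the word would otherwise need an $e_{33}$-variable before $z_k$ or an $e_{22}$-variable after it. The block orders then determine the monomial. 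Its coefficient is zero, and varying $k$ gives independence for all three families.

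\emph{Counting.} The first two families each contain $n(n-1)$ monomials, given by the choice of the ordered pair $(a,k)$. The third contains $n(2^{n-1}-2)$ monomials: choose $k$, then a subset of the remaining $n-1$ labels to precede $z_k$, which must be neither empty nor everything. The total is
\[
2n(n-1)+n2^{n-1}-2n=n\bigl(2^{n-1}+2n-4\bigr).
\]

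The only delicate point is making sure the first two evaluations do not also detect internal monomials. This is handled by the one-sided annihilation properties $e_{34}\mathcal A_8=0$ and $\mathcal A_8e_{12}\subseteq Fe_{12}$ composed with the diagonal choices. The hypothesis $n\geq4$ (rather than $n\geq3$) is not needed for independence itself; it only matches the range of Proposition~\ref{sec10:prop-a8-even}.
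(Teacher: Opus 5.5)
Your proof is correct and is essentially the paper's argument: it uses the same three separating evaluations ($e_{13},e_{33},e_{34}$ for the odd-last family, $e_{12},e_{22},e_{24}$ for the odd-first family, $e_{22},e_{23},e_{33}$ for the internal family), each of which isolates a single monomial, followed by the same count $2n(n-1)+n(2^{n-1}-2)$. There are two small slips in wording: the facts actually used are $\mathcal A_8e_{12}=0$ and $e_{24}\mathcal A_8=0$ (not merely $\mathcal A_8e_{12}\subseteq Fe_{12}$, and $e_{24}$ does not kill what stands to its left, e.g.\ $e_{22}e_{24}=e_{24}$), and setting $z_{k'}=0$ for $k'\neq k$ is needed precisely because $P_{n-1,1}$ does contain monomials with other odd labels.
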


\begin{proof}
Suppose that a linear combination of these monomials is a graded
identity and fix a monomial with odd label $k$.
For the first family, evaluate $y_a\mapsto e_{13}$,
$z_k\mapsto e_{34}$ and all remaining even variables at $e_{33}$.
A nonzero word must begin with $y_a$ and end with $z_k$.
Its ordered middle block selects the chosen monomial, with value
$e_{14}$, so its coefficient is zero.
For the second family, the evaluation $z_k\mapsto e_{12}$,
$y_a\mapsto e_{24}$ and all remaining even variables at $e_{22}$
gives the same conclusion.

For the third family, evaluate
\[
y_{i_r}\mapsto e_{22}\ (1\leq r\leq s),\qquad
z_k\mapsto e_{23},\qquad
y_{j_r}\mapsto e_{33}\ (1\leq r\leq t).
\]
A nonzero word must place every $e_{22}$ before $e_{23}$ and every
$e_{33}$ after it. Since both blocks are nonempty and ordered, only
the chosen monomial survives, with value $e_{23}$. Thus all
coefficients vanish.

Each endpoint family contains $n(n-1)$ monomials. In the third,
choose the odd label and the nonempty proper subset of the remaining
labels occurring before it. Hence, for $n\geq3$,
\[
c_{n-1,1}(\mathcal A_8)
=2n(n-1)+n(2^{n-1}-2)
=n\bigl(2^{n-1}+2n-4\bigr).
\]
\end{proof}

\begin{itemize}
    \item \textbf{Case} $q=2$.
\end{itemize}

Write an arbitrary multilinear monomial with two odd variables
as $Uz_iVz_kW$, where $U,V,W$ are possibly empty even words.
If both $U$ and $W$ are nonempty, this monomial belongs to $I$, since $y_1z_2z_3y_4\in I$.

The variables within each of the three even blocks can be
increasingly ordered modulo $I$, using the identities $[y_1,y_2]z_3z_4$, $z_1[y_2,y_3]z_4$ and $z_1z_2[y_3,y_4]$. 
Thus, for $n\geq3$, $P_{n-2,2}$ is spanned modulo $I$ by
\begin{equation}\label{sec10:a8-twoodd}
\begin{aligned}
&z_i y_{i_1}\cdots y_{i_s}z_k y_{j_1}\cdots y_{j_t},
&&s,t\geq0,\quad s+t=n-2,\\
&y_{i_1}\cdots y_{i_s}z_i y_{j_1}\cdots y_{j_t}z_k,
&&s\geq1,\quad t\geq0,\quad s+t=n-2.
\end{aligned}
\end{equation}
The families are disjoint, since their first variables have different
parities. Both orders of the distinct odd labels $i,k$ are included.

\begin{proposition}\label{sec10:prop-a8-twoodd}
For every $n\geq4$, the monomials in \eqref{sec10:a8-twoodd}
are linearly independent modulo
$\textnormal{Id}^{\mathbb Z_2}(\mathcal A_8)$. Moreover, $c_{n-2,2}(\mathcal A_8)
=n(n-1)(2^{n-1}-1),$ for all $n\geq4.$
\end{proposition}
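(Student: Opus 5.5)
The plan follows the pattern of Propositions~\ref{sec10:prop-a8-oneodd} and \ref{sec8:prop-a4-twoodd}. Suppose a linear combination of the monomials in \eqref{sec10:a8-twoodd} is a graded identity of $\mathcal A_8$. We fix an ordered pair $(i,k)$ of odd labels and send every other odd variable to zero. Since the monomials are multilinear and each evaluation below is chosen separately for each monomial, it suffices to exhibit, for each monomial $m$, an evaluation under which $m$ is the only monomial with nonzero value. Inside each block the indices are increasing. Hence a monomial is determined by its odd order, by the splitting of the even labels among the blocks, and by the family.

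For the first family, $z_iy_{i_1}\cdots y_{i_s}z_ky_{j_1}\cdots y_{j_t}$, use
\[
z_i\mapsto e_{12},\qquad y_{i_a}\mapsto e_{22},\qquad z_k\mapsto e_{23},\qquad y_{j_b}\mapsto e_{33}.
\]
The only odd units available are $e_{12}$ and $e_{23}$, and the path $1\to2\to3$ forces the word to begin with $z_i$. Every $e_{22}$ must lie between $z_i$ and $z_k$, and every $e_{33}$ must come after $z_k$. This singles out $m$, with value $e_{13}$. A monomial of the second family begins with an even variable, and its value is a product starting with $e_{22}$ or $e_{33}$, ending in an odd unit. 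Such a product cannot be nonzero with $z_i=e_{12}$, since no even value precedes $e_{12}$ nontrivially ($e_{22}e_{12}=e_{33}e_{12}=0$). It also cannot be nonzero with only $e_{23}$ remaining, because $z_i$ would have to be placed somewhere. So these vanish, as do monomials with the opposite odd order, since $e_{23}\cdots e_{12}=0$.

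For the second family, $y_{i_1}\cdots y_{i_s}z_iy_{j_1}\cdots y_{j_t}z_k$ with $s\ge1$, use
\[
y_{i_a}\mapsto e_{22},\qquad z_i\mapsto e_{23},\qquad y_{j_b}\mapsto e_{33},\qquad z_k\mapsto e_{34}.
\]
This gives value $e_{24}$. A nonzero word must follow the path $2\to2\to3\to3\to4$, so it ends with $z_k$, has all $e_{22}$-variables before $z_i$, and all $e_{33}$-variables between $z_i$ and $z_k$. This determines $m$. First-family words begin with the odd variable $z_i$, and $e_{23}e_{22}=0$ kills them since $s\ge1$. The opposite odd order vanishes because $e_{34}$ annihilates on the right.

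Thus all coefficients vanish. The two families are separated by evaluations in the same way that case $q=1$ was treated, so no triangularity argument is needed. The main point to check carefully is that in the first evaluation no second-family word survives. This holds because every even value lies in $Fe_{22}+Fe_{33}$, and a word starting with such a value cannot reach $e_{13}$.

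For the count, there are $n(n-1)$ ordered odd pairs. The first family contributes $2^{n-2}$ subsets of even labels placed before $z_k$. The second family contributes $2^{n-2}-1$ choices, because the block before $z_i$ must be nonempty. Therefore
\[
c_{n-2,2}(\mathcal A_8)=n(n-1)\bigl(2^{n-2}+2^{n-2}-1\bigr)=n(n-1)(2^{n-1}-1).
\]
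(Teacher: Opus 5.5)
Your proof is correct and follows essentially the same route as the paper. You use the same two evaluations: $z_i\mapsto e_{12}$, $z_k\mapsto e_{23}$ for the first family and $z_i\mapsto e_{23}$, $z_k\mapsto e_{34}$ for the second, with the even blocks sent to $e_{22}$ and $e_{33}$. These isolate each monomial and kill the opposite odd order, and your count $n(n-1)\bigl(2^{n-2}+2^{n-2}-1\bigr)$ matches the paper's (which, like yours, takes the spanning reduction preceding the proposition as given).
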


\begin{proof}
Suppose that a linear combination of the displayed monomials is a
graded identity and fix one of them.
For a monomial of the first family, evaluate
\[
z_i\mapsto e_{12},\quad z_k\mapsto e_{23},\quad
y_{i_r}\mapsto e_{22}\ (1\leq r\leq s),\quad
y_{j_r}\mapsto e_{33}\ (1\leq r\leq t).
\]
A nonzero word must begin with $z_i$, place every $e_{22}$ between
$z_i$ and $z_k$ and place every $e_{33}$ after $z_k$.
The ordering of the blocks therefore selects the chosen monomial,
with value $e_{13}$.

For a monomial of the second family, use the same even assignments
and take $z_i\mapsto e_{23}$, $z_k\mapsto e_{34}$.
A nonzero word must place every $e_{22}$ before $z_i$, every $e_{33}$
between $z_i$ and $z_k$ and $z_k$ last. Since $s\geq1$, such a word
belongs to the second family and is uniquely determined.
Its value is $e_{24}$. Both evaluations also eliminate the opposite
odd order, so every coefficient vanishes.

For each of the $n(n-1)$ ordered pairs $(i,k)$, the first family
contains $2^{n-2}$ monomials, while the second contains $2^{n-2}-1$,
since its initial block is nonempty. For $n\geq3$, their sum gives
\[
c_{n-2,2}(\mathcal A_8)
=n(n-1)\bigl(2^{n-2}+2^{n-2}-1\bigr)
=n(n-1)(2^{n-1}-1).
\]
\end{proof}

\begin{samepage}
\begin{itemize}
    \item \textbf{Case} $q=3$.
\end{itemize}

Write a multilinear monomial with three odd variables as
$Uz_iVz_jWz_kT$, where $U,V,W,T$ are possibly empty even words.
If $U$ or $T$ is nonempty, the monomial belongs to $I$ by
$y_1z_2z_3z_4$ or $z_1z_2z_3y_4$, respectively. 
Therefore it suffices to consider monomials of type $z_iVz_jWz_k$.
\end{samepage}

The identity $z_1[y_2,y_3]z_4\in I$ allows us to order the even variables increasingly within each of the two blocks. Hence, for $n\geq 3$, $P_{n-3,3}$ is spanned modulo $I$
by the monomials
\begin{equation}\label{sec10:a8-threeodd}
z_i y_{i_1}\cdots y_{i_s}z_j y_{j_1}\cdots y_{j_t}z_k,
\qquad s,t\geq0,\quad s+t=n-3.
\end{equation}
All orders of the three distinct odd labels $i,j,k$ are included.

\begin{proposition}\label{sec10:prop-a8-threeodd}
For every $n\geq4$, the monomials in \eqref{sec10:a8-threeodd} are
linearly independent modulo
$\textnormal{Id}^{\mathbb Z_2}(\mathcal A_8)$. Moreover, $c_{n-3,3}(\mathcal A_8)=n(n-1)(n-2)2^{n-3},$ , for all $n\geq 4.$
\end{proposition}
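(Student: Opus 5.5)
Following the pattern of Propositions~\ref{sec10:prop-a8-oneodd} and~\ref{sec10:prop-a8-twoodd}, I will attach to each monomial of \eqref{sec10:a8-threeodd} a substitution whose only nonzero value on the whole family is taken at that monomial. Suppose that
\[
f=\sum \alpha_m m\in\operatorname{Id}^{\mathbb Z_2}(\mathcal A_8),
\]
where $m$ runs over the monomials in \eqref{sec10:a8-threeodd}. Fix one of them,
\[
m=z_i y_{i_1}\cdots y_{i_s}z_j y_{j_1}\cdots y_{j_t}z_k .
\]
Send the odd variables not in $\{i,j,k\}$ to zero. Then evaluate
\[
z_i\mapsto e_{12},\quad z_j\mapsto e_{23},\quad z_k\mapsto e_{34},\quad
y_{i_r}\mapsto e_{22}\ (1\leq r\leq s),\quad y_{j_r}\mapsto e_{33}\ (1\leq r\leq t).
\]
All of these are homogeneous of the correct parity in $\mathcal A_8$. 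Indeed, $e_{12},e_{23},e_{34}$ are odd and $e_{22},e_{33}$ are even.

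Next I trace which monomials survive. Any monomial in the family has the shape $z_{\sigma}Vz_{\tau}Wz_{\rho}$, with $\{\sigma,\tau,\rho\}=\{i,j,k\}$. A nonzero product of the matrix units $e_{12},e_{23},e_{34},e_{22},e_{33}$ must be a path $1\to2\to3\to4$. This forces the first odd variable to be $z_i$, the second to be $z_j$ and the last to be $z_k$. Since $e_{12}e_{33}=0$ and $e_{22}e_{23}=e_{23}$, every variable evaluated at $e_{22}$ must lie between $z_i$ and $z_j$. Likewise, since $e_{23}e_{33}=e_{23}$ and $e_{33}e_{34}=e_{34}$, while $e_{22}$ cannot follow $e_{23}$, every $e_{33}$-variable must lie between $z_j$ and $z_k$. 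Thus both even blocks are determined as sets. The increasing order within each block then selects exactly $m$, whose value is $e_{12}e_{22}^{s}e_{23}e_{33}^{t}e_{34}=e_{14}\neq0$. Hence $\alpha_m=0$, and since $m$ was arbitrary, all coefficients vanish. I expect no real obstacle here. The only point needing care is the empty-block cases $s=0$ or $t=0$, and the same computation covers them.

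For the count, choose the ordered triple $(i,j,k)$ of distinct odd labels in $n(n-1)(n-2)$ ways. Each of the remaining $n-3$ even labels is then placed either in the first block or in the second, and its position inside the block is fixed by the increasing order. This gives $2^{n-3}$ monomials for each triple. Combined with the spanning statement preceding the proposition, this yields $c_{n-3,3}(\mathcal A_8)=n(n-1)(n-2)2^{n-3}$.
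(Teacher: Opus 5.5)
Your proposal is correct and takes the same approach as the paper's proof. You use the same separating evaluation ($z_i\mapsto e_{12}$, $z_j\mapsto e_{23}$, $z_k\mapsto e_{34}$, first even block to $e_{22}$, second to $e_{33}$), the same path argument forcing the odd order and block membership, and the same count of $n(n-1)(n-2)$ ordered odd triples times $2^{n-3}$ block assignments.
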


\begin{proof}
Suppose that a linear combination of these monomials is a graded
identity. For a fixed monomial, evaluate
\[
\begin{gathered}
z_i\mapsto e_{12},\qquad z_j\mapsto e_{23},\qquad z_k\mapsto e_{34},\\
y_{i_r}\mapsto e_{22}\ (1\leq r\leq s),\qquad
y_{j_r}\mapsto e_{33}\ (1\leq r\leq t).
\end{gathered}
\]
A nonzero word must have odd order $(i,j,k)$, with all variables
assigned $e_{22}$ between $z_i,z_j$ and those assigned $e_{33}$
between $z_j,z_k$. The ordering of the two blocks therefore selects
the chosen monomial uniquely, with value $e_{14}$.
Its coefficient is zero, proving independence.

There are $n(n-1)(n-2)$ choices of the ordered odd labels and each
remaining label can be placed in either even block. This gives
$n(n-1)(n-2)2^{n-3}$ monomials and proves the formula.
\end{proof}

The preceding propositions allow us to conclude the following.

\begin{theorem}\label{sec10:thm-a8}
For the superalgebra $\mathcal A_8$, the following statements hold.
\begin{enumerate}
    \item The polynomials in \eqref{sec10:a8-identities} generate
    $\textnormal{Id}^{\mathbb Z_2}(\mathcal A_8)$ as a
    $T_{\mathbb Z_2}$-ideal.

    \item For every $n\geq4$, the monomials in
    \eqref{sec10:a8-even}, \eqref{sec10:a8-oneodd},
    \eqref{sec10:a8-twoodd} and \eqref{sec10:a8-threeodd} form a
    basis of $P_n^{\mathbb Z_2}$ modulo
    $P_n^{\mathbb Z_2}\cap\textnormal{Id}^{\mathbb Z_2}(\mathcal A_8)$.

    \item  $c_n^{\mathbb Z_2}(\mathcal A_8)
    =2^{n-3}n(n^2+n+2)+n(n-1)-1,$ for all $n\geq 4.$
\end{enumerate}
\end{theorem}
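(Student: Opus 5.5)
The argument runs in parallel with Theorems~\ref{sec6:thm-a2}, \ref{sec7:thm-a3}, \ref{sec8:thm-a4} and \ref{sec9:thm-a6}.

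First, let $I$ be the $T_{\mathbb Z_2}$-ideal generated by the polynomials in \eqref{sec10:a8-identities}. Each of them vanishes on $\mathcal A_8$: this is a direct check on the matrix units listed in the table, so $I\subseteq\operatorname{Id}^{\mathbb Z_2}(\mathcal A_8)$. The reductions preceding Propositions~\ref{sec10:prop-a8-even}--\ref{sec10:prop-a8-threeodd} show two things modulo $I$. One is that $P_{n-q,q}\subseteq I$ for $q\geq4$. The other is that, for $q=0,1,2,3$, the space $P_{n-q,q}$ is spanned by the monomials in \eqref{sec10:a8-even}, \eqref{sec10:a8-oneodd}, \eqref{sec10:a8-twoodd} and \eqref{sec10:a8-threeodd}, respectively. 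Those propositions then show that, for $n\geq4$, these families are linearly independent modulo $\operatorname{Id}^{\mathbb Z_2}(\mathcal A_8)$.

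Since $P_n^{\mathbb Z_2}=\bigoplus_q P_{n-q,q}$ and graded identities split into these components, the union of the families spans $P_n^{\mathbb Z_2}$ modulo $I$ and is independent modulo $\operatorname{Id}^{\mathbb Z_2}(\mathcal A_8)$. This gives $P_n^{\mathbb Z_2}\cap I=P_n^{\mathbb Z_2}\cap\operatorname{Id}^{\mathbb Z_2}(\mathcal A_8)$ for $n\geq4$, which is item~(2).

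For item~(1) I also need the small degrees $n\leq3$. All generators have degree at most four, and the spanning arguments above remain valid for $n=3$. For $n\leq3$ I will verify directly that every multilinear graded identity lies in $I$. Here the identities $\operatorname{St}_3(y_1,y_2,y_3)$, the degree-three generators and $z_1z_2z_3$-type words of degree three (which survive only as $z_iz_jz_k$) are checked by hand evaluations, using the same substitutions as in the propositions. Since $\operatorname{char}F=0$, multilinearization then gives $\operatorname{Id}^{\mathbb Z_2}(\mathcal A_8)=I$.

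Item~(3) follows by summing via \eqref{eqcodimensoes}:
\[
(2n-1)+n(2^{n-1}+2n-4)+n(n-1)(2^{n-1}-1)+n(n-1)(n-2)2^{n-3}.
\]
Collecting the $2^{n-3}$ terms gives $2^{n-3}\bigl(4n+4n(n-1)+n(n-1)(n-2)\bigr)=2^{n-3}n(n^2+n+2)$. The remaining terms give $2n-1+2n^2-4n-n^2+n=n(n-1)-1$, matching the statement.

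The main obstacle I expect is the low-degree check $n\leq3$. For $n\geq4$ everything is assembled from results already stated. In degree three, however, the even part of $\mathcal A_8$ is not separated by the four-variable evaluations, so I must confirm by hand that the degree-three identities are generated by $\operatorname{St}_3$ and the remaining generators.
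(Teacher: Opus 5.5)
Your proposal is correct and follows essentially the paper's own route: the theorem is assembled from the spanning reductions modulo $I$, the independence results of Propositions~\ref{sec10:prop-a8-even}--\ref{sec10:prop-a8-threeodd}, and summation via \eqref{eqcodimensoes}, and your arithmetic for item~(3) checks out. Your explicit treatment of degrees $n\leq 3$, which the paper leaves implicit, is a sensible addition and is easier than you fear. Via the isomorphism of even parts, Proposition~\ref{sec7:prop-a3-even} (valid for $n\geq 3$) and the separating evaluations in the other proofs work verbatim for $n=3$. So the only multilinear graded identities of degree three are the multiples of $\operatorname{St}_3$ in even variables, and there are none of degree at most two.
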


We finish by distinguishing the eight superalgebras considered above.

\begin{remark}\label{sec10:remark-distinct-gradings}
The superalgebras $\mathcal A_1,\ldots,\mathcal A_8$ are pairwise
non-PI-equivalent as $\mathbb Z_2$-graded algebras. Consequently,
they are pairwise nonisomorphic as graded algebras
\end{remark}

\vspace{0.3cm}
 \textbf{Acknowledgments}
\vspace{0.3cm}

The authors would like to express their gratitude to Professor F. Yasumura for suggesting the approach used in the proof of Theorem~\ref{gradingond0}.

\end{document}